\newtheorem{lemma}{Lemma}[section]
\newtheorem{proposition}[lemma]{Proposition}
\newtheorem{theorem}[lemma]{Theorem}
\newtheorem{corollary}[lemma]{Corollary}
\theoremstyle{definition}
\newtheorem{definition}[lemma]{Definition}
\newtheorem{remark}[lemma]{Remark}
\newcommand{\msc}[1]{\mathscr{{#1}}}
\newcommand{\mcl}[1]{\mathcal{{#1}}}
\newcommand{\mbb}[1]{\mathbb{#1}}
\newcommand{\mrm}[1]{\mathrm{#1}}
\newcommand{\mfk}[1]{\mathfrak{#1}}
\newcommand{\Prim}{\mathrm{Prim}}
\newcommand{\ot}{\otimes}
\newcommand{\End}{\mathrm{End}}
\renewcommand{\O}{\mathscr{O}}
\DeclareMathOperator{\Frac}{Frac}
\DeclareMathOperator{\YD}{YD}
\title[]{Pointed Hopf actions on central simple division algebras}
\date{\today}
\author{Pavel Etingof}
\email{etingof@mth.mit.edu}
\address{Department of Mathematics, Massachusetts Institute of Technology, Cambridge, MA
02139}
\author{Cris Negron}
\thanks{The author was supported by NSF Postdoctoral Research Fellowship DMS-1503147}
\email{negronc@mit.edu}
\address{Department of Mathematics, Massachusetts Institute of Technology, Cambridge, MA
02139}
\begin{document}
\maketitle

\begin{abstract}
We examine actions of finite-dimensional pointed Hopf algebras on central simple division algebras in characteristic $0$.  (By a Hopf action we mean a Hopf module algebra structure.)  In all examples considered, we show that the given Hopf algebra does admit a faithful action on a central simple division algebra, and we construct such a division algebra.  This is in contrast to earlier work of Etingof and Walton, in which it was shown that most pointed Hopf algebras do not admit faithful actions on fields.  We consider all bosonizations of Nichols algebras of finite Cartan type, small quantum groups, generalized Taft algebras with non-nilpotent skew primitive generators, and an example of non-Cartan type.
\end{abstract}

\section{Introduction}

This work is concerned with pointed Hopf actions on central simple division algebras, in characteristic $0$.  It is an open question~\cite[Question 1.1]{cuadraetingof16} whether or not an arbitrary finite-dimensional Hopf algebra can act inner faithfully on such a division algebra.  A conjecture of Artamonov also proposes that any finite-dimensional Hopf algebra should act inner faithfully on the ring of fractions of a quantum torus~\cite[Conjecture 0.1]{artamonov05}, and it is known that the parameters appearing in such a quantum torus cannot (all) be generic~\cite[Theorem 1.8]{etingofwalton16}.  
\par

We focus here on examples, and consider exclusively pointed Hopf algebras with abelian group of grouplikes.  Such algebras are well-understood via the extensive work of many authors, e.g.~\cite{heckenberger06,andruskiewitschschneider10,angiono13}.

\begin{theorem}\label{thm:01}
The following Hopf algebras admit an inner faithful Hopf action on a central simple division algebra:
\begin{itemize}
\item Any bosonization $H=B(V)\rtimes G$ of a Nichols algebra of a finite Cartan type braided vector space via an abelian group $G$ (as defined in~\cite{andruskiewitschschneider10}).
\item The small quantum group $u_q(\mfk{g})$ of a semisimple Lie algebra $\mfk{g}$.
\item Generalized small quantum groups $u(\mcl{D})$ such that the space of skew primitives in $u(\mcl{D})$ generate $\operatorname{Rep}(G)$ (as a tensor category), where $G$ is the group of grouplikes in $u(\mcl{D})$.
\item Generalized Taft algebras $T(n,m,\alpha)$, where $m\mid n$ and $\alpha\in \mbb{C}$.
\item The $64$-dimensional Hopf algebra $H=B(W)\rtimes \mbb{Z}/4\mbb{Z}$, where $W$ is the $2$-dimensional braided vector space with braiding matrix {\small$\left[\begin{array}{cc}
-1 & i\\
-1 & i
\end{array}\right]$}.
\end{itemize}
\end{theorem}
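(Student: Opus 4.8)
The plan is to treat all five families by a single device: realize each $H$ as acting on the fraction field of a quantum torus whose commutation parameters are tuned to the braiding of the underlying Yetter--Drinfeld module. Writing $H=B(V)\rtimes G$ with $V$ spanned by skew-primitives $x_1,\dots,x_\theta$ satisfying $g\cdot x_i=\chi_i(g)x_i$ and $\Delta(x_i)=x_i\ot 1+g_i\ot x_i$, I would introduce a quantum torus $R=\mbb{C}_\beta[t_1^{\pm1},\dots,t_\theta^{\pm1}]$ with $t_it_j=\beta_{ij}t_jt_i$, let $G$ act diagonally by $g\cdot t_i=\chi_i(g)t_i$, and let each generator act by the $(1,g_i)$-skew-derivation $\delta_i$ given by the inner operator $a\mapsto t_ia-(g_i\cdot a)t_i$. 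A direct check shows $\delta_i$ obeys the twisted Leibniz rule dictated by $\Delta(x_i)$ and that $g\delta_i g^{-1}=\chi_i(g)\delta_i$ as operators, so the relations of $G$ and the compatibility with the coaction hold automatically. Since $R$ is a Noetherian Ore domain and $H$ is finite dimensional, the action extends to $D=\Frac(R)$, the candidate central simple division algebra.

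The first genuine task is to verify that the $\delta_i$ satisfy the defining relations of $B(V)$. The power relations come essentially for free: evaluating on a generator gives $\delta_i^{k}(t_i)=\prod_{j=1}^{k}\bigl(1-q_{ii}^{\,j}\bigr)\,t_i^{k+1}$ with $q_{ii}=\chi_i(g_i)$, so $\delta_i^{N_i}=0$ exactly when $q_{ii}$ is a primitive $N_i$-th root of unity, which is precisely the finite Cartan-type hypothesis; the reduction from generators to all of $R$ uses the vanishing of the relevant $q$-binomial coefficients. For the off-diagonal and quantum Serre relations I would choose the $\beta_{ij}$ compatibly with the braiding matrix $(q_{ij})$, so that the assignment $x_i\mapsto\delta_i$ is governed by the braided differential calculus on $R$; here the input of~\cite{heckenberger06,andruskiewitschschneider10,angiono13} is decisive, since the rigidity of the defining data in finite Cartan type guarantees a uniform admissible choice of parameters. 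Well-definedness of the action amounts to saying that every relation of $B(V)$ is satisfied by the $\delta_i$, while faithfulness of the $B(V)$-part amounts to saying that no further relation is imposed; the latter I would deduce from the nondegeneracy of the braided pairing that characterizes the Nichols algebra.

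I expect the main obstacle to be inner faithfulness together with central simplicity of $D$. For inner faithfulness one reduces, using the coradical filtration and the structure of Hopf ideals of a pointed Hopf algebra with abelian coradical, to checking that $G$ acts faithfully and that each $\delta_i$ is nonzero; faithfulness of the $G$-action may force enlarging $R$ by auxiliary variables carrying characters that separate $G$, and one must confirm (by setting $\delta_i$ to annihilate them) that this enlargement preserves the skew-derivation relations. For central simplicity I would take the $\beta_{ij}$ to be roots of unity generating a subgroup large enough that $R$ is an Azumaya algebra over its center, whence $D$ is finite dimensional and central simple over $\Frac(Z(R))$; the delicate point is that the \emph{same} parameters must simultaneously reproduce the Nichols relations and supply the genericity needed for simplicity. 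The small quantum group $u_q(\mfk{g})$ is the most demanding case, since the $\delta_i$ must satisfy the full quantum Serre relations among all simple-root generators as well as the root-vector relations; I would organize that verification along a PBW/root-vector basis rather than treating the generators symmetrically, and similarly handle the generalized small quantum groups $u(\mcl{D})$ under the stated hypothesis that the skew primitives tensor-generate $\rep(G)$, which is exactly what forces the $G$-action to be faithful.

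Finally, the non-nilpotent and non-Cartan cases need only small modifications. For the generalized Taft algebras $T(n,m,\alpha)$ with $\alpha\neq0$ the generator satisfies $x^m=\alpha(g^m-1)$ rather than $x^m=0$, so I would perturb the implementing element, or relate $t^m$ to the grouplike realizing $g^m$, so that $\delta^m$ realizes the operator $\alpha\bigl((g^m\cdot-)-\mrm{id}\bigr)$; the non-nilpotency is in fact an advantage, since a non-nilpotent skew-derivation is far less rigid than the nilpotent one ruled out over commutative fields in~\cite{etingofwalton16}, and this is precisely the mechanism by which noncommutativity of $D$ evades that obstruction. For the $64$-dimensional algebra $H=B(W)\rtimes\mbb{Z}/4\mbb{Z}$ I would run the construction by hand on a two-variable quantum torus, using $q_{11}=-1$ and $q_{22}=i$ to obtain $\delta_1^2=0$ and $\delta_2^4=0$ directly, and then verify the finitely many remaining mixed and root-vector relations by explicit computation, after which inner faithfulness and central simplicity are immediate from the smallness of the example.
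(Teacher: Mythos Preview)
Your approach of implementing skew primitives as inner skew derivations $\delta_i = [t_i, -]_{\mrm{sk}}$ on a skew polynomial ring is precisely what the paper does for the coradically graded cases (the bosonizations $B(V)\rtimes G$, Taft algebras at $\alpha=0$, and the $64$-dimensional example). However, there are two substantial gaps.

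First, your unified framework assumes $H = B(V)\rtimes G$, but $u_q(\mfk{g})$ and $u(\mcl{D})$ are \emph{not} bosonizations: they are cocycle deformations thereof, and carry linking relations such as $[E_i, F_j] = \delta_{ij}(K_i - K_i^{-1})/(q_i - q_i^{-1})$ that are not Nichols relations. You mention ``quantum Serre relations'' and ``root-vector relations'' for $u_q(\mfk{g})$ but never the $E$--$F$ commutators, and it is far from clear that inner skew derivations $\delta_{E_i}, \delta_{F_i}$ on a quantum torus can be arranged to satisfy them. The paper bypasses this entirely: for $u_q(\mfk{g})$ it uses the left translation action on $\Frac(\O_q(\mbb{G}))$, the quantum function algebra, where all relations hold tautologically and the Hopf-Galois property comes from the exact sequence $\O(\mbb{G})\to\O_q(\mbb{G})\to u_q(\mfk{g})^\ast$; for $u(\mcl{D})$ it uses the adjoint action on the fraction field of the Angiono--de Concini--Kac form $H_\sigma^{\mrm{pre}}$, with the linking relations absorbed into the cocycle twist.

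Second, even in the graded Cartan case your verification of the Nichols relations is incomplete. Beyond the simple-root nilpotence $x_i^{N_i}=0$ and the $q$-Serre relations, one must check the root-vector relations $x_\gamma^{N_\gamma}=0$ for \emph{all} positive roots $\gamma$; your appeal to ``braided differential calculus'' and ``nondegeneracy of the braided pairing'' does not accomplish this. The paper's device is indirect and is the real content of that case: the subalgebra $Z_0 = \langle x_\gamma^{N_\gamma}\rangle$ lies in the total braided center of the pre-Nichols algebra $R^{\mrm{pre}}$, so the adjoint action of $R^{\mrm{pre}}$ on itself (and hence on any quotient such as the ordered skew polynomial ring $S_{\mrm{ord}}$) factors through $B(V)$, making the root-vector relations automatic. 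Similarly, for $T(n,m,\alpha)$ with $\alpha\neq 0$ your ``perturbation'' idea points in the right direction, but the relation $x^m = \alpha(1-g^m)$ forces the implementing element $c$ to satisfy $c^m a - (g^m\cdot a)c^m = \alpha(a - g^m\cdot a)$ for all $a$, which a simple shift on a quantum torus will not produce; the paper instead builds $\Frac(L[t;\sigma])$ from the Galois splitting field of $(X^m-1)^{n/m}-w$ to obtain such an element.
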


In each of the examples appearing in Theorem~\ref{thm:01}, an explicit central simple division algebra with an inner faithful action is constructed.  We also consider in each case whether the action we construct is Hopf-Galois.
\par

As mentioned in the abstract, our results contrast with those of Etingof-Walton \cite{etingofwalton15,etingofwaltonII}.  In~\cite{etingofwalton15} the authors show that any generalized Taft algebra $T(n,m,\alpha)$ which admits an inner faithful action on a field is a standard Taft algebra $T(m,m,0)$.  Although more general Cartan type algebras $B(V)\rtimes G$ are not directly considered in~\cite{etingofwalton15,etingofwaltonII}, this restriction on Taft actions already obstructs actions of general bosonizations $B(V)\rtimes G$, as each pair $(g,v)$ of a grouplike $g\in G$ and $(g,1)$-skew primitive $v\in V$ generates a generalized Taft algebra in $B(V)\rtimes G$.  Similarly, small quantum groups outside of type $A_1$ were shown to not act inner faithfully on fields in~\cite{etingofwalton15,etingofwaltonII}.
\par

Our methods are based on the observation that, for $H$ a pointed Hopf algebra with abelian group of grouplikes $G$, and $Q$ a central simple division algebra with an $H$-action, the skew primitives in $H$ must act as inner skew derivations on $Q$ (see Theorem~\ref{thm:TQstructure} and Lemma~\ref{lem:inn_der} below).  Hence actions of $H$ on a given $Q$ are parametrized by a choice of a grading by the character group of $G$, and a corresponding choice of a collection of elements in $Q$ which solve certain universal equations for (the skew primitives in) $H$.
\par

The universal approach to Hopf actions we have just described is discussed in more detail, at least in the case of coradically graded $H$, in Section~\ref{sect:univ}.

\subsection*{Acknowledgements}

We thank Iv\'{a}n Angiono, Juan Cuadra, and Chelsea Walton for helpful conversations.  The first author was partially supported by the US National Science Foundation grant DMS-1502244.  The second author was supported by the US National Science Foundation Postdoctoral Research Fellowship DMS-1503147.

\tableofcontents

\section{Preliminaries}

\subsection{Conventions}

All algebras, vector spaces, etc.\ are over $\mbb{C}$.  For a Hopf algebra $H$ we let $G(H)$ denote the group of grouplike elements.  Given a Hopf algebra $H$ and a grouplike $g\in G(H)$ we let $\Prim_g(H)$ denote the $\mbb{C}$-subspace of $(g,1)$-skew primitives.  We take
\[
\Prim(H)=\oplus_{g\in G}\Prim_g(H).
\]

Given a finite-dimensional Hopf algebra $H$ and $H$-module algebra $A$, we say that $A$ is $H$-Galois over its invariants $A^H$ if, under the corresponding $H^\ast$-coaction, $A$ is an $H^\ast$-Galois extension of its coinvariants $A^H=A^{\mrm{co}H^\ast}$.

\subsection{The category $\YD(G)$}

We recall some standard notions, which can be found in~\cite{andruskiewitschschneider10} for example.  The category of Yetter-Drinfeld modules over a group $G$ is the category of simultaneous left $G$-representations and left $kG$-comodules $V$ which satisfy the compatibility
\[
\rho(g\cdot v)=(gv_{-1}g^{-1})\ot gv_0,
\]
where $g\in G$, $v\in V$, and $\rho(v)=v_{-1}\ot v_0$ denotes the $kG$-coaction.  This category is braided, with braiding
\[
c_{V,W}:V\ot W\to W\ot V,\ \ v\ot w\mapsto (v_{-1}w)\ot v_0.
\]
We will focus mainly on Yetter-Drinfeld modules over abelian $G$, in which case the action and coaction simply commute.
\par

For algebras $A$ and $B$ in $\YD(G)$, we define the braided tensor product $A\underline{\ot}B$ as the vector space $A\ot B$ with product
\[
(a\ot b)\cdot (a'\ot b')=\big(a(b_{-1}a')\big)\ot \big(b_0b'\big).
\]
The object $A\underline{\ot}B$ is another algebra in $\YD(G)$ under the diagonal action and coaction.  We can also define the braided opposite algebra $A^{\underline{op}}$, which is the vector space $A$ with multiplication $a\cdot_{\underline{op}}b=(a_{-1}b)a_0$.

A Hopf algebra in $\YD(G)$ is an algebra $R$ in $\YD(G)$ equipped with a coalgebra structure such that the comultiplication $\Delta_R:R\to R\underline{\ot}R$ is a map of algebras in $\YD(G)$.  Such an $R$ should also come equipped with an antipode $S_R:R\to R$ which is a braided anti-algebra and anti-coalgebra map satisfying $S_R(r_1)r_2=r_1S_R(r_2)=\epsilon(r)$, for each $r\in R$.

\begin{definition}
Given a Hopf algebra $R$ in $\YD(G)$, the bosonization of $R$ is the smash product algebra $R\rtimes G$.
\end{definition}

Any bosonization $R\rtimes G$ is well-known to be a Hopf algebra with unique Hopf structure $(\Delta,\epsilon, S)$ such that $k[G]$ is a Hopf subalgebra, and on $R\subset R\rtimes G$ we have
\[
\Delta(r)=r_1(r_2)_{-1}\ot (r_2)_0,\ \ \epsilon(r)=\epsilon_R(r),\ \ S(r)=S_{k[G]}(r_{-1})S_R(r).
\]
The bosonization operation is also referred to as the Radford biproduct in the literature.

\begin{lemma}\label{lem:YD_act}
Let $A$ be an algebra in $\YD(G)$.  Suppose $R$ acts on $A$ in such a way that the action map $R\ot A\to A$ is a morphism in $\YD(G)$ and 
\[
r\cdot (ab)=\big(r_1(r_2)_{-1}a\big)\big((r_2)_0b\big)
\]
for $r\in R$, $a,b\in A$.  Then $A$ is a module algebra over the bosonization $R\rtimes G$, where $G$ acts on $A$ via the Yetter-Drinfeld structure and the $R$-action is unchanged.
\end{lemma}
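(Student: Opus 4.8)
The plan is to give $A$ the structure of a module over $H = R\rtimes G$ by the formula
\[
(r\rtimes g)\cdot a := r\cdot(g\cdot a),
\]
using the given $R$-action and the Yetter--Drinfeld action of $G$, and then to check the three module-algebra axioms directly. Recall from the bosonization formula that on $R\subset H$ one has $\Delta_H(r) = r_1(r_2)_{-1}\otimes(r_2)_0$, whereas $\Delta_H(g) = g\otimes g$ for $g\in G$. Since $H$ is generated as an algebra by $R$ and $G$ (indeed $r\rtimes g = (r\rtimes 1)(1\rtimes g)$), it will be enough to understand these two classes of elements.

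First I would verify that the displayed formula is an action of the algebra $H$. As $H = R\otimes\mbb{C}[G]$ as a vector space the map is well defined, and associativity reduces to compatibility with the smash relation $(r\rtimes g)(r'\rtimes g') = r\,(g\cdot r')\rtimes gg'$. Unwinding both sides, this is precisely the identity $g\cdot(r'\cdot a) = (g\cdot r')\cdot(g\cdot a)$, i.e.\ the $G$-linearity of the action map $R\otimes A\to A$, which is part of the hypothesis that this map is a morphism in $\YD(G)$.

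Next I would establish the measuring axiom. The set $\mathcal{M}$ of those $h\in H$ satisfying both $h\cdot(ab) = \sum(h_1\cdot a)(h_2\cdot b)$ and $h\cdot 1_A = \epsilon(h)1_A$ is a unital subalgebra of $H$, since these conditions propagate through products by way of $\Delta_H(hh') = \Delta_H(h)\Delta_H(h')$; hence it suffices to show $R,G\subset\mathcal{M}$. For $g\in G$ both conditions hold because $A$ is an algebra in $\YD(G)$: its product is $G$-linear and $1_A$ is $G$-invariant. For $r\in R$, reading the action formula against $\Delta_H(r) = r_1(r_2)_{-1}\otimes(r_2)_0$ shows that the multiplicativity condition for $r$ is exactly the displayed hypothesis $r\cdot(ab) = \big(r_1(r_2)_{-1}a\big)\big((r_2)_0 b\big)$.

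The hard part, and the only genuinely new point, will be the counit condition $r\cdot 1_A = \epsilon_R(r)1_A$ for $r\in R$, which is what remains in order to conclude $R\subset\mathcal{M}$. I would argue as follows. Put $f(r):=r\cdot 1_A$ and apply the multiplicativity hypothesis to $1_A\cdot 1_A$; since $(r_2)_{-1}\cdot 1_A = \epsilon((r_2)_{-1})1_A$ collapses the coaction term, this gives $f(r) = \sum f(r_1)f(r_2)$, so that $f$ is a convolution idempotent in $\Hom(R,A)$ with $f(1_R) = 1_A$. Because the braided Hopf algebras $R$ occurring here are connected (their coradical is $\mbb{C}1_R$, as for any Nichols algebra), setting $\phi := f - \eta_A\epsilon_R$ turns this into $\phi = -\phi*\phi$ with $\phi(1_R) = 0$, and an induction along the coradical filtration then forces $\phi = 0$. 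Hence $f = \eta_A\epsilon_R$, that is $r\cdot 1_A = \epsilon(r)1_A$. Together with the two axioms already checked, this shows $A$ is an $H$-module algebra.
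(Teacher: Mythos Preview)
Your argument is correct and carries out in full what the paper dismisses in a single sentence (``immediate from the definition of the comultiplication on the bosonization''). The verification that the action is well defined, the reduction of the measuring axiom to the generating sets $R$ and $G$, and the identification of the displayed hypothesis with the multiplicativity condition for $r\in R$ are exactly the content the paper leaves implicit.

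The one place where you do more than the paper is the unit condition $r\cdot 1_A=\epsilon_R(r)1_A$. The lemma as stated does not list this among its hypotheses, and you are right that it does not follow formally from the multiplicativity hypothesis alone for an arbitrary braided Hopf algebra $R$: your convolution-idempotent argument genuinely needs $R$ to be connected. Strictly speaking you have thereby added a hypothesis to the lemma. In the paper this is harmless, since every $R$ to which the lemma is applied is a Nichols algebra (or its pre-Nichols version), hence connected; one may also read the lemma's hypothesis as shorthand for ``$A$ is an $R$-module algebra in $\YD(G)$'', in which case the unit axiom is already assumed and your extra paragraph is unnecessary. Either way, your proof is sound and more careful than the original on this point.
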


\begin{proof}
This is immediate from the definition of the comultiplication on the bosonization.
\end{proof}

\subsection{Hopf actions on division algebras}

Recall that for a domain $A$ which is finite over its center, we have the division algebra $\Frac(A)$, which one can construct as the localization via the center $\Frac(A)=\Frac(Z(A))\ot_{Z(A)}A$.

\begin{theorem}[{\cite[Theorem 2.2]{skryabinoystaeyen06}}]\label{thm:indQ}
Suppose a Hopf algebra $H$ acts on a domain $A$ which is finite over its center.  Then there is a unique extension of this $H$-action to an action on the fraction division algebra $\Frac(A)$.
\end{theorem}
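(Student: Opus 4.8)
The plan is to prove existence and uniqueness of the extended action separately, handling uniqueness first since it will guide the existence construction.

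For uniqueness, I would argue that any element of $\Frac(A)$ can be written in the form $z^{-1}a$ with $z\in Z(A)$ nonzero and $a\in A$, using the description $\Frac(A)=\Frac(Z(A))\ot_{Z(A)}A$ given just above. Suppose the $H$-action extends. For a grouplike $g\in G(H)$ the extension is forced by multiplicativity: $g\cdot(z^{-1}a)=(g\cdot z^{-1})(g\cdot a)=(g\cdot z)^{-1}(g\cdot a)$, where $g\cdot z^{-1}=(g\cdot z)^{-1}$ follows from applying $g$ to $z\cdot z^{-1}=1$. More generally, the key point is that for any $h\in H$ and any nonzero central $z$, the value $h\cdot z^{-1}$ is determined by the values of $h$ on $A$. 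I would extract this from the identity $h\cdot 1=\epsilon(h)1$ together with the module-algebra axiom $h\cdot(z\cdot z^{-1})=\sum (h_1\cdot z)(h_2\cdot z^{-1})$; solving this recursively (using the coassociativity/counit structure and the fact that $z$ is central and invertible in $\Frac(A)$) expresses $h\cdot z^{-1}$ in terms of the already-known action on $A$. This shows at most one extension exists.

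For existence, the main task is to \emph{define} the action on inverses and check the module-algebra axioms. First I would observe that $H$ preserves the center: since the $H$-action on $A$ commutes with nothing a priori, I instead note that for $z\in Z(A)$ and the localization, it suffices to show that $H$ sends $\Frac(Z(A))\subset \Frac(A)$ into itself, so that one can extend by the braided/twisted Leibniz rule. The cleanest route is to reduce to the grouplike and skew-primitive generators is \emph{not} available in general, so instead I would use the formula dictated by uniqueness as the \emph{definition}: set $h\cdot z^{-1}$ to be the expression forced above, extend $k$-linearly to define $h\cdot(z^{-1}a)$, and then verify this is well-defined (independent of the representation $z^{-1}a$) and satisfies the axioms. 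Well-definedness amounts to compatibility with the relations $z^{-1}a=(zw)^{-1}(wa)$ for nonzero central $w$, which follows once the Leibniz-type formula for $h\cdot(zw)^{-1}$ is shown consistent with $h\cdot z^{-1}$ and $h\cdot w^{-1}$.

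The hard part will be verifying the module-algebra axiom $h\cdot(xy)=\sum(h_1\cdot x)(h_2\cdot y)$ for arbitrary $x,y\in\Frac(A)$ once the action is defined only via the forced formula on inverses; this requires an inductive argument on the coalgebra structure of $H$ (e.g.\ filtering by the coradical filtration, or inducting on $\Delta$) to reduce the general identity to the known identity on $A$, and care is needed because $H$ need not be cocommutative and the center need not be $H$-stable elementwise. I expect this bookkeeping — propagating the Leibniz rule for inverses through the coproduct — to be the principal technical obstacle; it is precisely the content of the cited Skryabin–Van Oystaeyen argument, and I would either reproduce their localization-of-derivations technique or cite it directly.
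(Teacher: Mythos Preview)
The paper does not give its own proof of this theorem; it is stated with attribution to \cite[Theorem~2.2]{skryabinoystaeyen06} and followed only by a remark that the cited result is in fact more general (the $H$-action extends to $\Frac(A)$ whenever a reasonable ring of fractions exists, with no reference to the center). There is therefore nothing in the paper to compare your proposal against: the paper's ``proof'' is a citation, which is exactly one of the two options you yourself list at the end of your sketch.

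Your outline is in the right spirit for a direct argument via central denominators: write elements as $z^{-1}a$ with $z\in Z(A)$ nonzero, force $h\cdot z^{-1}$ from the module-algebra axiom applied to $z\cdot z^{-1}=1$, and then verify well-definedness and the axioms. Two cautions if you pursue it. First, the ``solve recursively'' step for $h\cdot z^{-1}$ from $\epsilon(h)=\sum (h_1\cdot z)(h_2\cdot z^{-1})$ is not a single equation in one unknown; turning it into an honest recursion requires a filtration on $H$ (e.g.\ the coradical filtration) and an induction along it, which you mention only later but is already needed here. Second, your aside about $H$ sending $\Frac(Z(A))$ into itself is a dead end in general, since $H$ need not stabilize the center; the argument must proceed without that, as you ultimately do. These are precisely the bookkeeping issues resolved in the Skryabin--Van~Oystaeyen paper, so simply citing it, as the present paper does, is entirely appropriate.
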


\begin{remark}
The result from~\cite{skryabinoystaeyen06} is significantly more general than what we have written here.  They show that an $H$-action extends to $\Frac(A)$, essentially, whenever a reasonable algebra of fractions exists for $A$ (with no reference to the center).
\end{remark}

When considering actions on division algebras, one can assess the Hopf-Galois property for the extension $Q^H\to Q$ via a rank calculation.

\begin{theorem}[{\cite[Theorem 3.3]{cfm90}}]\label{thm:Gal}
Suppose a finite-dimensional Hopf algebra $H$ acts on a division algebra $Q$.  Then $Q$ is $H$-Galois over $Q^H$ if and only if $\mrm{rank}_{Q^H}Q=\mrm{dim}H$.
\end{theorem}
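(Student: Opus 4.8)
The plan is to reduce the statement to the bijectivity of the natural representation
\[
\pi:Q\# H\to \End_{Q^H}(Q),\qquad \pi(q\# h)(x)=q(h\cdot x),
\]
of the smash product on $Q$, viewed as a right $Q^H$-module (that each $\pi(q\#h)$ is right $Q^H$-linear follows from the module-algebra axiom, since $h\cdot(xc)=\sum(h_1\cdot x)\epsilon(h_2)c=(h\cdot x)c$ for $c\in Q^H$). Here one invokes the standard duality for finite-dimensional $H$, which identifies, up to the usual adjunction, the canonical $H^\ast$-Galois map $\beta:Q\ot_{Q^H}Q\to Q\ot H^\ast$ with $\pi$; thus $Q/Q^H$ is $H$-Galois exactly when $\pi$ is an isomorphism, provided $Q$ is finitely generated projective over $Q^H$, which by Kreimer--Takeuchi is automatic once the extension is $H$-Galois. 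So I would phrase everything in terms of $\pi$.

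Before analyzing $\pi$ I would record two structural facts. First, $Q^H$ is itself a division ring: for invertible $a\in Q^H$ the module-algebra axiom gives $\epsilon(h)1=h\cdot(aa^{-1})=\sum(h_1\cdot a)(h_2\cdot a^{-1})=a\,(h\cdot a^{-1})$, whence $h\cdot a^{-1}=\epsilon(h)a^{-1}$ and $a^{-1}\in Q^H$. Consequently $Q$ is a free (left and right) $Q^H$-module, and $n:=\mrm{rank}_{Q^H}Q$ is a well-defined invariant. Second, $Q$ is a \emph{simple} left $Q\# H$-module, since it is already simple over the division subalgebra $Q\subset Q\# H$ acting by left multiplication.

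The heart of the argument is then the Jacobson density theorem applied to the simple module $Q$. I would first compute its endomorphism division ring $E:=\End_{Q\# H}(Q)$: an endomorphism commuting with left multiplication by $Q$ is right multiplication by some $c\in Q$, and commuting in addition with the $H$-action forces (take $x=1$) $h\cdot c=\epsilon(h)c$, i.e.\ $c\in Q^H$; hence $E\cong (Q^H)^{\mrm{op}}$ and $\End_E(Q)=\End_{Q^H}(Q_{Q^H})$. Density then says $\pi(Q\# H)$ is dense in $\End_{Q^H}(Q)$, and once $\dim_E Q=n<\infty$ this forces $\pi$ to be surjective. Finally I would compare the two sides of $\pi$ as free left $Q$-modules: $Q\# H\cong Q^{(\dim_{\mbb{C}}H)}$, while $\End_{Q^H}(Q)\cong M_n(Q^H)\cong Q^{(n)}$ as a left $Q$-module. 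Running this in both directions: if $\mrm{rank}_{Q^H}Q=\dim H$ then $n<\infty$, so density makes $\pi$ a surjection of free left $Q$-modules of equal finite rank $\dim H=n$, hence an isomorphism, hence the extension is Galois; conversely, if the extension is Galois then $\pi$ is bijective and the resulting left $Q$-module isomorphism $Q^{(\dim H)}\cong Q^{(n)}$ forces $\dim H=n$.

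The step I expect to be the main obstacle is the first one: pinning down the duality that replaces the comodule-theoretic $H^\ast$-Galois condition by bijectivity of the smash-product map $\pi$, together with the attendant left/right bookkeeping (on which side $Q^H$ acts, and the identification $\End_{Q^H}(Q)\cong M_n(Q^H)$ as a left $Q$-module of rank $n$). Once $\pi$ is in hand, the density argument and the rank comparison over the division ring $Q$ are routine.
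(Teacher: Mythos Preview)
The paper does not supply its own proof of this statement; it is quoted verbatim from \cite[Theorem 3.3]{cfm90} and used as a black box. Your sketch is essentially the argument given in that reference: one shows $Q^H$ is a division ring, so $Q$ is free over $Q^H$; the smash-product map $\pi:Q\#H\to\End_{Q^H}(Q)$ is surjective by Jacobson density (with $\End_{Q\#H}(Q)\cong (Q^H)^{\mrm{op}}$), and bijectivity of $\pi$ together with the rank count over $Q$ gives both directions. The only point you flag as uncertain---that bijectivity of $\pi$ is equivalent to the $H^\ast$-Galois condition when $Q$ is finitely generated projective over $Q^H$---is exactly the content of \cite[Theorem 1.2]{cfm90} (see also Kreimer--Takeuchi), so there is no gap once you invoke it.
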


\subsection{Faithfulness of pointed Hopf actions}

Recall that $\Prim_g(H)$ denotes the subspace of $(g,1)$-skew primitives in a Hopf algebra $H$, for $g$ an arbitrary grouplike.  Take $\Prim_g(H)'$ to be the sum of all the nontrivial eigenspaces for $\Prim_g(H)$ under the adjoint action of $g$.
\par

For finite-dimensional pointed $H$, we have that the nilpotence order of any $g$-eigenvector $x$ in the degree $1$ portion $\Prim_g(\mrm{gr} H)_1$ is less than or equal to the order of the associated eigenvalue.  So we see that the map
\[
\Prim_g(H)'\to \Prim_g(H)/\mbb{C}(1-g)=\Prim_g(\mrm{gr} H)_1
\]
is an isomorphism.  Now by the Taft-Wilson decomposition of the first portion of the coradical filtration $F_1H$~\cite{taftwilson74}, we have
\begin{equation}\label{eq:97}
F_1H=\mbb{C}[G]\oplus\left(\bigoplus_{g,h\in G}h\cdot \Prim_g(H)'\right),
\end{equation}
where $G=G(H)$.

\begin{lemma}\label{lem:fate}
Let $H$ be a finite-dimensional pointed Hopf algebra, and $A$ be an $H$-module algebra.  Suppose that the $G(H)$ action on $A$ is faithful, and that for each $g\in G(H)$ the map $\Prim_g(H)'\to \End_k(A)$ is injective.  Then the $H$-action on $A$ is inner faithful.
\end{lemma}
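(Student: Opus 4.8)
The plan is to show that the action map $\pi\colon H\to\End_k(A)$ has no nonzero Hopf ideal in its kernel, this being the meaning of inner faithfulness. So suppose, for contradiction, that $I\subseteq\ker\pi$ is a nonzero Hopf ideal. The whole argument rests on a dichotomy: on one hand every nonzero Hopf ideal in a pointed Hopf algebra already meets the first piece $F_1H$ of the coradical filtration, while on the other hand the two hypotheses force $I\cap F_1H=0$.

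For the first point I would pass to the associated graded $\mathrm{gr}H$ for the coradical filtration, which is coradically graded with $(\mathrm{gr}H)_0=\mbb{C}[G]$ since $H$ is pointed. The image $\mathrm{gr}I=\bigoplus_n (I\cap F_nH)/(I\cap F_{n-1}H)$ is a nonzero graded coideal of $\mathrm{gr}H$. Let $\bar x$ be a nonzero homogeneous element of minimal degree $n$ in $\mathrm{gr}I$. Since $\mathrm{gr}I$ is a coideal, $\Delta(\bar x)$ lies in $\mathrm{gr}I\ot\mathrm{gr}H+\mathrm{gr}H\ot\mathrm{gr}I$; comparing homogeneous bidegrees and using minimality of $n$, every mixed term in $(\mathrm{gr}H)_i\ot(\mathrm{gr}H)_{n-i}$ with $0<i<n$ vanishes, so $\Delta(\bar x)\in (\mathrm{gr}H)_0\ot(\mathrm{gr}H)_n+(\mathrm{gr}H)_n\ot(\mathrm{gr}H)_0$. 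By the wedge characterization of the coradical filtration this places $\bar x$ in $F_1(\mathrm{gr}H)=(\mathrm{gr}H)_0\oplus(\mathrm{gr}H)_1$, and homogeneity forces $n\le 1$. Hence $I\cap F_1H\neq 0$.

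For the second point, fix $0\neq x\in I\cap F_1H$ and expand it through the Taft--Wilson decomposition \eqref{eq:97} as $x=c+\sum_{g,h}h\cdot x_{g,h}$ with $c=\sum_k c_k k\in\mbb{C}[G]$ and $x_{g,h}\in\Prim_g(H)'$. The key device is that, because $\pi(I)=0$ and $I$ is a coideal, we have $(\pi\ot\pi)\Delta(x)=0$ in $\End_k(A)\ot\End_k(A)$. Writing $\Delta(h x_{g,h})=h x_{g,h}\ot h+hg\ot h x_{g,h}$ and grading $\End_k(A)$ by the way operators shift the $\widehat{G}$-grading on $A$ (available since $G$ is finite and we are in characteristic $0$), the component in which the second tensor factor is grading-preserving reads $\sum_{m}\pi(m)\big(c_m+\sum_g\pi(x_{g,m})\big)\ot\pi(m)=0$. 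Linear independence of the distinct grouplike operators $\pi(m)$ --- valid since $A$ is a domain, by the Artin--Dedekind independence of automorphisms --- then yields $c_m+\sum_g\pi(x_{g,m})=0$ for each grouplike $m$. Separating this equation by grading shift gives $c_m=0$ (the grading-preserving part) and $\sum_g\pi(x_{g,m})=0$. For the latter I would use that each $\pi(x_{g,m})$ is a $(\pi(g),1)$-skew derivation: evaluating $\sum_g\pi(x_{g,m})$ on a product and again invoking independence of the $\pi(g)$ separates the summands, so each $\pi(x_{g,m})=0$. The injectivity hypothesis $\Prim_g(H)'\hookrightarrow\End_k(A)$ finally gives $x_{g,m}=0$, whence $x=0$ and $I\cap F_1H=0$, the desired contradiction.

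The main obstacle is precisely the separation in the second point: the Taft--Wilson summands $h\cdot\Prim_g(H)'$ can a priori produce cancelling operators of the same $\widehat{G}$-degree coming from different grouplikes $h$ and different skew-primitive ``types'' $g$, so the bare injectivity of each $\Prim_g(H)'\to\End_k(A)$ is not by itself enough. What rescues the argument is the interplay between the coideal identity $(\pi\ot\pi)\Delta(x)=0$, which lets the distinct grouplikes $\pi(m)$ be peeled off by linear independence, and the twisted Leibniz rule, which does the same for the different $g$; both uses of independence rely on $A$ being a domain, as it is in every application, where $A$ is a central simple division algebra. I would also note that the $\widehat{G}$-grading is cleanest when $G$ is abelian, the setting of all the examples in Theorem~\ref{thm:01}; for a general group of grouplikes one replaces it by the isotypic decomposition of $A$ under $G$.
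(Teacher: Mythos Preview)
Your overall architecture is the same as the paper's---show that any proper Hopf quotient $H\to K$ is already injective on $F_1H$, then invoke the Heyneman--Radford theorem---but your implementation of the second step has a genuine gap. You work directly in $\End_k(A)$ and need the distinct grouplike operators $\pi(m)\in\End_k(A)$ to be $\mbb{C}$-linearly independent, which you obtain from Artin--Dedekind by assuming $A$ is a domain. The lemma makes no such assumption, and the implication ``$G$ acts faithfully $\Rightarrow$ the $\pi(g)$ are linearly independent'' is false in general: for $G=\mbb{Z}/3\mbb{Z}$ acting faithfully on $A=\mbb{C}[x]/(x^2)$ by $g\cdot x=\omega x$, the three operators $\pi(1),\pi(g),\pi(g^2)$ lie in the $2$-dimensional space of diagonal matrices and are linearly dependent. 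So your separation argument cannot go through at the stated level of generality, and the same obstruction recurs when you try to peel apart the skew-derivation summands by their Leibniz behaviour.

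The paper sidesteps this entirely by carrying out the separation not in $\End_k(A)$ but in the Hopf quotient $K$. Since $\pi:H\to K$ is a Hopf map, it sends grouplikes to grouplikes and $(g,1)$-skew primitives to $(\pi(g),1)$-skew primitives; faithfulness of the $G$-action forces $\pi|_G$ injective, so $G(H)\hookrightarrow G(K)$, and the hypothesis on $\Prim_g(H)'$ then gives $\Prim_g(H)'\hookrightarrow\Prim_{\pi(g)}(K)'$. Now one applies the Taft--Wilson decomposition~\eqref{eq:97} to $K$ itself: because distinct grouplikes in any Hopf algebra are automatically linearly independent, the summands $\pi(h)\cdot\Prim_{\pi(g)}(K)'$ are already direct in $F_1K$, and $F_1H\to F_1K$ is injective with no appeal to properties of $A$. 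If you want to keep your ideal-theoretic phrasing, the fix is simply to replace $\End_k(A)$ by $H/I$ throughout the second step.
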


\begin{proof}
Take $G=G(H)$.  Suppose we have a factorization $H\to K\to \End_k(A)$, where $\pi:H\to K$ is a Hopf projection.  By considering the dual inclusion $K^\ast\to H^\ast$ we find that $K$ is pointed as well.  By faithfulness of the $G$-action we have that $\pi|_{G}$ is injective.  Furthermore, each $\pi|_{\Prim_g(H)'}$ is injective by hypothesis, and each $\Prim_g(H)'$ maps to $\Prim_g(K)'$.  By the decomposition~\eqref{eq:97}, where we replace $H$ with $K$, we find that the restriction $F_1H\to F_1K$ is injective.  It follows that $\pi$ is injective~\cite[Prop. 2.4.2]{heynemanradford74}, and therefore an isomorphism.
\end{proof}

In the case in which the group of grouplikes $G=G(H)$ is abelian, the entire group $G$ acts on each $\Prim_g(H)$, and we can decompose the sum of the primitive spaces $\Prim(H)$ as
\[
\mbb{C}1_H\oplus \Prim(H)=\mbb{C}[G]\oplus \Prim(H)',
\]
where $\Prim(H)'$ is the sum of the nontrivial eigenspaces.

\begin{corollary}\label{cor:fate}
Suppose $H$ is finite-dimensional and pointed, with abelian group of grouplikes.  Then an action of $H$ on an algebra $A$ is inner faithful provided $G(H)$ acts faithfully on $A$ and the restriction of the representation $H\to \End_k(A)$ to $\Prim(H)'$ is injective.
\end{corollary}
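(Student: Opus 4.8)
The plan is to derive this corollary as a direct specialization of Lemma~\ref{lem:fate} to the abelian case, with essentially no new content beyond unwinding a direct-sum decomposition. The crucial structural fact is that when $G=G(H)$ is abelian, the space of skew primitives lying in nontrivial $G$-eigenspaces decomposes as an internal direct sum indexed by the grouplikes,
\[
\Prim(H)'=\bigoplus_{g\in G}\Prim_g(H)'.
\]
This follows from the convention $\Prim(H)=\oplus_{g\in G}\Prim_g(H)$ recorded in the preliminaries, together with the definition of each $\Prim_g(H)'$ as the sum of the nontrivial $g$-eigenspaces inside $\Prim_g(H)$. First I would record this decomposition explicitly, noting that it is exactly the one isolated in the paragraph preceding the statement.

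Next I would exploit the elementary fact that a linear map which is injective on a direct sum is injective on each summand. Given the hypothesis that the restriction of $H\to\End_k(A)$ to $\Prim(H)'$ is injective, the display above then forces, for every grouplike $g\in G$, that the map $\Prim_g(H)'\to\End_k(A)$ is injective. This is precisely the per-grouplike injectivity demanded in the hypotheses of Lemma~\ref{lem:fate}.

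Finally, combining this with the assumed faithfulness of the $G(H)$-action on $A$, both hypotheses of Lemma~\ref{lem:fate} are verified, and that lemma immediately yields that the $H$-action on $A$ is inner faithful. Since each step is formal, I do not anticipate a genuine obstacle; the only point requiring any care is confirming that the sum defining $\Prim(H)'$ really is direct, so that injectivity descends to the individual subspaces $\Prim_g(H)'$. This directness is guaranteed by the eigenspace decomposition of $\Prim(H)$ under the action of the abelian group $G$, which exhibits the $\Prim_g(H)'$ as linearly independent subspaces of $H$.
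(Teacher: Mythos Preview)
Your proposal is correct and follows exactly the paper's approach: the paper's entire proof is the single line ``We have $\Prim(H)'=\oplus_g\Prim_g(H)'$ in this case,'' after which Lemma~\ref{lem:fate} applies. Your only addition is some (harmless) extra care about the directness of the sum, which in fact is not even needed since injectivity on a space trivially restricts to injectivity on any subspace.
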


\begin{proof}
We have $\Prim(H)'=\oplus_g\Prim_g(H)'$ in this case.
\end{proof}

\section{Actions of generalized Taft algebras} 

We consider for positive integers $m\leq n$, with $m\mid n$, the Hopf algebra
\[
T(n,m,\alpha)=\frac{\mbb{C}\langle x,g\rangle}{(x^m-\alpha(1-g^m), g^n-1, gxg^{-1}-qx)},
\]
where $q$ is a primitive $m$-th root of $1$.  In the algebra $T(n,m,\alpha)$ the element $g$ is grouplike and $x$ is $(g,1)$-skew primitive.
\par

We apply Theorem~\ref{thm:TQstructure} below to obtain actions of these Hopf algebras on central simple division algebras.  At $\alpha=0$, the division algebra we produce is the ring of fractions of a quantum plane, while the division algebra we produce for $T(n,m,1)$ has a more intricate structure.

\subsection{Generic actions of pointed Hopf algebras and Taft algebras}
Let us take a moment to consider actions of pointed Hopf algebras in general, before returning to the specific case of generalized Taft algebras.

We note that for a pointed Hopf algebra $H$ each skew primitive $x_i$ determines a Hopf embedding $T(n_i,m_i,\alpha_i)\to H$.  An action of $H$ on an algebra $A$ is then determined by an action of the group $G(H)$ and compatible actions of the Hopf subalgebras $T(n_i,m_i,\alpha_i)\to H$.  Whence we study actions of the generalized Taft algebras $T(n,m,\alpha)$ in order to understand actions of pointed Hopf algebras more generally.

The following result motivates most of our constructions, even when it is not explicitly referenced.  The proof is non-trivial and is given in Section~\ref{sect:proof}.

\begin{theorem}\label{thm:TQstructure}
Suppose $T(n,m,\alpha)$ acts on a central simple algebra $A$, and fix $\zeta$ a primitive $n$-th root of $1$ with $\zeta^{\frac{n}{m}}=q$.  Let $A=\oplus_{i=0}^n A_i$ be the corresponding decomposition of $A$ into eigenspaces, so that $g$ acts as $\zeta^i$ on $A_i$.  Then there exists $c\in A_{n/m}$ such that $x\cdot a=ca-\zeta^{|a|}ac$ for each (homogeneous) $a\in A$.  Furthermore, this element $c$ satisfies the commutativity relation
\begin{equation}\label{eq:comm_rel}
c^ma-\zeta^{m|a|}ac^m=\alpha(1-\zeta^{m|a|})a
\end{equation}
for each homogeneous $a\in A$.
\par

Conversely, if $A=\oplus_{i=0}^n A_i$ is a $\mbb{Z}/n\mbb{Z}$-graded central simple division algebra, and $c\in A_{n/m}$ is such that $c^ma-\zeta^{m|a|}ac^m=\alpha(\zeta^{m|a|}-1)a$ for each homogeneous $a\in A$, then there is a (unique) action of the generalized Taft algebra $T(n,m,\alpha)$ on $A$ given by
\[
g\cdot a=\zeta^{|a|}a\ \ \text{and}\ \ x\cdot a=ca-\zeta^{|a|}ac
\]
which gives $A$ the structure of a $T(n,m,\alpha)$-module algebra.
\end{theorem}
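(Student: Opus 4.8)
The plan is to prove both directions by translating the Hopf-algebra axioms for a $T(n,m,\alpha)$-action into the stated formulas for $g$ and $x$. The essential reduction, supplied by Theorem~\ref{thm:TQstructure} in its first half (which I may assume, together with Lemma~\ref{lem:inn_der} referenced in the introduction), is that on a central simple algebra the skew-primitive $x$ must act as an \emph{inner} skew-derivation, i.e.\ $x\cdot a = ca - \zeta^{|a|}ac$ for some element $c$ sitting in the graded piece $A_{n/m}$. So for the converse I start from a $\mbb{Z}/n\mbb{Z}$-graded central simple division algebra $A$ and such an element $c$, \emph{define} the two operators by the given formulas, and verify that they assemble into a genuine $T(n,m,\alpha)$-module-algebra structure.

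First I would check that $g$ acts as an algebra automorphism: since $g\cdot a=\zeta^{|a|}a$ and the grading is by $\mbb{Z}/n\mbb{Z}$, multiplicativity $g\cdot(ab)=(g\cdot a)(g\cdot b)$ is immediate from $|ab|=|a|+|b|$, and $g$ has order $n$ because $\zeta$ is a primitive $n$-th root of unity. Next I would verify that $x$ is a $(g,1)$-skew derivation in the module-algebra sense, namely $x\cdot(ab)=(x\cdot a)b+(g\cdot a)(x\cdot b)$. Writing out the right-hand side with the inner-derivation formula, the cross terms telescope: the $\zeta^{|a|}ac\cdot b$ contribution cancels against the $\zeta^{|a|}a\cdot cb$ contribution (here one uses that $c\in A_{n/m}$ so that $|c\cdot\text{anything}|$ shifts the grading uniformly), leaving exactly $c(ab)-\zeta^{|ab|}(ab)c$, which is $x\cdot(ab)$. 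I would also confirm the twisted commutation relation $gx=qxg$ at the level of operators, i.e.\ $g\cdot(x\cdot a)=q\,x\cdot(g\cdot a)$, which follows because applying $g$ multiplies by $\zeta$ in each of the $n/m$ degrees that $c$ raises, giving the factor $\zeta^{n/m}=q$.

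The crucial remaining relation is $x^m=\alpha(1-g^m)$, and this is exactly where the hypothesis on $c$ enters and where I expect the main work to lie. The plan is to compute $x^m\cdot a$ directly. Because $c$ has homogeneous degree $n/m$, iterating the operator $x\cdot(-)=c(-)-\zeta^{|-|}(-)c$ produces, after $m$ steps, a sum that I expect to collapse into the single ``$q$-commutator'' form $c^m a - \zeta^{m|a|}a c^m$, the intermediate cross-terms vanishing by a standard $q$-binomial identity using $q=\zeta^{n/m}$ and the fact that $q$ is a primitive $m$-th root of unity (so the relevant $q$-binomial coefficients $\binom{m}{k}_q$ for $0<k<m$ are zero). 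Combining this with the assumed relation $c^m a-\zeta^{m|a|}a c^m=\alpha(\zeta^{m|a|}-1)a$ and noting that $g^m\cdot a=\zeta^{m|a|}a$ while $(1-g^m)\cdot a=(1-\zeta^{m|a|})a$, I get $x^m\cdot a=\alpha(\zeta^{m|a|}-1)a=-\alpha(1-g^m)\cdot a$; I would then reconcile the sign with the sign convention in the defining relation of $T(n,m,\alpha)$ and in the converse hypothesis, since the two displayed commutativity relations differ by a sign and this must be tracked carefully.

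The hardest step is verifying $x^m=\alpha(1-g^m)$: establishing that the iterated inner skew-derivation telescopes to the clean $q$-commutator $c^m a-\zeta^{m|a|}ac^m$ requires the vanishing of the off-diagonal $q$-binomial coefficients, and one must keep precise track of how each application of $x$ shifts the degree by $n/m$ and hence multiplies subsequent eigenvalue factors by $q$. Once that identity is in hand, the remaining axioms and the \emph{uniqueness} statement are essentially formal: uniqueness holds because any module-algebra action is determined by the operators assigned to the algebra generators $g$ and $x$, and the first half of the theorem forces those operators to have exactly the stated form. I would close by remarking that $A$ being a division algebra is used only to guarantee, via the forward direction, that $c$ exists and lies in the correct graded component; for the converse as stated, central simplicity together with the grading suffices to run the computation.
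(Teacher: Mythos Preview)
Your proposal is correct and matches the paper's approach: invoke Lemma~\ref{lem:inn_der} (applied with $K=Z(A)^{T}$, which is a central invariant subfield over which $A$ is finite by Proposition~\ref{prop:735}) to obtain the inner form $x\cdot a=[c,a]_{\mathrm{sk}}$, and then reduce both directions to the single identity $[c,-]_{\mathrm{sk}}^{m}(a)=c^{m}a-\zeta^{m|a|}ac^{m}$. The only difference lies in how that identity is verified: you recognize the coefficients in the expansion $\prod_{k=0}^{m-1}(L-\zeta^{|a|}q^{k}R)$ as $q^{l(l-1)/2}\binom{m}{l}_{q}$ and invoke the standard vanishing of Gaussian binomials at a primitive $m$-th root of unity, whereas the paper writes the same coefficients as elementary symmetric polynomials $\omega_{l}=e_{l}(1,q,\dots,q^{m-1})$ and proves $\omega_{l}=0$ for $0<l<m$ by an explicit recursion---your route is shorter, and both arrive at the surviving end term $(-1)^{m}\omega_{m}=(-1)^{m}q^{m(m-1)/2}=-1$ in the same way.
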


Now, for general $H$ with abelian group of grouplikes, if $H$ acts on a central simple algebra $A$ then we decompose $A$ into character spaces $A=\oplus_\mu A_\mu$ for the action of $G$.  For each skew homogeneous $(g_i,1)$-skew primitive $x_i\in H$, with associated character $\chi_i$, we have the generalized Taft subalgebra $T(n_i,m_i,\alpha_i)\to H$.  By restricting the action, and considering Theorem~\ref{thm:TQstructure}, we see that each $x_i$ acts on $A$ as an operator
\[
x_i\cdot a=c_ia-\mu(g_i)ac_i,\ \ \text{for }a\in A_\mu,
\]
for an element $c_i\in A_{\chi_i}$.  Whence the action of $H$ is determined by a choice of a $G^\vee$-grading on $A$ and a choice of elements $c_i\in A_{\chi_i}$ satisfying relations~\eqref{eq:comm_rel} (as well as all other relations for $H$).  We return to this topic in Sections~\ref{sect:proof} and~\ref{sect:univ}.

\subsection{A Hopf-Galois action for generalized Taft algebras at $\alpha=0$}

Consider $T(n,m,0)$ as above, with $q$ a primitive $m$-th root of $1$.  It was shown in~\cite{etingofwalton15} that this algebra admits no inner faithful action on a field when $n>m$.
\par

Take $K=\mbb{C}(u,v)$ and consider the cyclic algebra
\[
Q(n,m)=Q_\zeta(n,m):=K\langle c,w\rangle/(c^n-u,w^n-v,cw- \zeta wc),
\]
where $\zeta$ is a chosen primitive $n$-th root of $1$ with $\zeta^{\frac{n}{m}}=q$.  The algebra $Q(n,m)$ is a cyclic division algebra of degree $n$ over $K$.

\begin{proposition}\label{prop:gen_taft0}
The central simple division algebra $Q(n,m)$ admits an inner faithful $T(n,m,0)$-action which is uniquely specified by the values
\[
g\cdot c=qc,\ \ g\cdot w=\zeta w,\ \ x\cdot c=(1-q)c^2,\ \ x\cdot w=0.
\]
Furthermore, $Q(n,m)$ is $T(n,m,0)$-Galois over its invariants $Q(n,m)^{T(n,m,0)}$.
\end{proposition}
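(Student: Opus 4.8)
The plan is to realize the asserted action as an instance of the converse direction of Theorem~\ref{thm:TQstructure}, and then to establish inner faithfulness and the Hopf--Galois property via Corollary~\ref{cor:fate} and Theorem~\ref{thm:Gal} respectively. First I would equip $Q(n,m)$ with a $\mbb{Z}/n\mbb{Z}$-grading by setting $|w|=1$, $|c|=n/m$, and $|u|=|v|=0$. Since $\zeta^{n/m}=q$, each defining relation is homogeneous: $cw-\zeta wc$ sits in degree $\tfrac{n}{m}+1$, while $c^n-u$ and $w^n-v$ lie in degree $0$ (as $n\cdot\tfrac{n}{m}\equiv 0$ and $n\equiv 0 \pmod n$), so $K=\mbb{C}(u,v)$ is concentrated in degree $0$. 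The grading is thus well defined, and $g$ acts on homogeneous $a$ as multiplication by $\zeta^{|a|}$.

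The element required by Theorem~\ref{thm:TQstructure} is the generator $c\in Q(n,m)_{n/m}$ itself. At $\alpha=0$ the relation~\eqref{eq:comm_rel} to be checked reads $c^m a=\zeta^{m|a|}ac^m$ for homogeneous $a$; since both sides are $K$-linear and multiplicative in $a$, it suffices to verify this on the algebra generators $c$ and $w$. The case $a=c$ is trivial, and the case $a=w$ follows from $c^m w=\zeta^m wc^m$ together with $|w|=1$. Theorem~\ref{thm:TQstructure} then produces a unique $T(n,m,0)$-action with $g\cdot a=\zeta^{|a|}a$ and $x\cdot a=ca-\zeta^{|a|}ac$; evaluating on generators recovers exactly $g\cdot c=qc$, $g\cdot w=\zeta w$, $x\cdot c=c^2-qc^2=(1-q)c^2$, and $x\cdot w=cw-\zeta wc=0$. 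Uniqueness among actions with these four values is immediate, as they determine both the grading and the element $c$, to which the theorem attaches a unique action.

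For inner faithfulness I would invoke Corollary~\ref{cor:fate}. The group $\langle g\rangle\cong\mbb{Z}/n\mbb{Z}$ acts faithfully because $g^j\cdot w=\zeta^j w\ne w$ for $0<j<n$, as $\zeta$ is a primitive $n$-th root of unity. Moreover $\Prim(T(n,m,0))'=\mbb{C}x$ maps to a nonzero operator, since $x\cdot c=(1-q)c^2\ne 0$ in the division algebra $Q(n,m)$ using $q\ne 1$. Corollary~\ref{cor:fate} then gives that the action is inner faithful.

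Finally, the Hopf--Galois statement reduces by Theorem~\ref{thm:Gal} to the rank computation $\mrm{rank}_{Q^H}Q(n,m)=\dim T(n,m,0)=nm$, and identifying $Q^H$ is where the real work lies. The $g$-invariants form the degree-$0$ component, which is free over $K$ with basis $\{c^i w^{j(i)}:0\le i<n\}$, where $j(i)\in\{0,\dots,n-1\}$ is determined by $j(i)\equiv -i\tfrac{n}{m}\pmod n$. On this component $x$ acts as the commutator $a\mapsto ca-ac$, and since $cwc^{-1}=\zeta w$ gives $c(c^iw^j)c^{-1}=\zeta^{j}c^iw^j$, this commutator annihilates exactly the monomials with $j\equiv 0$, i.e.\ those $i$ with $m\mid i$. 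Hence $Q^H=K[c^m]\cong K[T]/(T^{n/m}-u)$, a degree-$n/m$ field extension of $K$. Comparing $K$-dimensions yields $\mrm{rank}_{Q^H}Q(n,m)=\dim_K Q(n,m)/\dim_K Q^H=n^2/(n/m)=nm$, which is $\dim T(n,m,0)$, so Theorem~\ref{thm:Gal} applies. The main obstacle is the precise identification of the centralizer of $c$ inside the degree-$0$ part; once this is in hand the dimension count and the appeal to Theorem~\ref{thm:Gal} are routine.
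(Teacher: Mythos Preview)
Your proof is correct and follows essentially the same route as the paper: both obtain the action from Theorem~\ref{thm:TQstructure}, identify the invariants as the degree-$n/m$ field extension $K(c^m)$ of $K$, and conclude the Hopf--Galois property from the rank count via Theorem~\ref{thm:Gal}. Your argument is in fact slightly more explicit---you verify the hypothesis of Theorem~\ref{thm:TQstructure} on generators and invoke Corollary~\ref{cor:fate} for inner faithfulness (which the paper leaves implicit), and you compute the invariants by diagonalizing $\mathrm{Ad}(c)$ on the degree-$0$ component over $K$ rather than by applying $x$ directly over $K(c^m)$---but these are organizational differences, not a different strategy.
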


\begin{proof}
Take $s=\frac{n}{m}$. The existence of the proposed inner faithful action follows by Theorem~\ref{thm:TQstructure}.  So we need only address the Hopf-Galois property.  Take $T=T(n,m,0)$ and define $[c,a]_\mrm{sk}:=ca-(g\cdot a)c$ for arbitrary $a\in Q(n,m)$.
\par

As for the Hopf-Galois property, we consider the basis of monomials $\{c^iw^j\}_{i,j=0}^{n-1}$ for $Q(n,m)$, considered as a vector space over the field $K=\mbb{C}(u,v)=\mbb{C}(c^{n},w^{n})$. The elements $c^m$ and $w^{n}$ are both $g$-invariant and 
\[
\mrm{ad}_\mrm{sk}(c)(c^m)=[c,c^m]=0,\ \ \mrm{ad}_\mrm{sk}(c)(w^{n})=[c,w^n]=0.
\]
So the degree $m$ field extension $K(c^m)\subset Q(n,m)$ lies in the $T$-invariants.  The algebra $Q(n,m)$ is free over $K(c^m)$ on the left with basis 
\[
\{c^iw^j:0\leq i<m,\ 0\leq j<n\}.
\]

Now, for a generic element
\[
f=\sum_{0\leq i< m,\ 0\leq j<n}f(i,j)c^iw^j\ \in\ Q(n,m)
\]
with the coefficients $f(i,j)\in K(c^m)$ we have $g\cdot f=\sum_{i,j}\zeta^{si+j}f(i,j)c^iw^j$.  So for $g$-invariant $f$ we have $f=\sum_{i=0}^{m-1}f(i)c^iw^{s(m-i)}$.  Now applying $x$ gives
\[
x\cdot f=\sum_{i=0}^{m-1}(1-q^i)f(i)c^{i+1}w^{s(m-i)}.
\]
So $x\cdot f=0$ requires $f=f(0)$.  This identifies the invariants $Q(n,m)^T$ with the subfield $K(c^m)$.  Hence $Q(n,m)$ is free of rank $mn=\mrm{dim}T$ over its invariants, and we find that $Q(n,m)$ is $T$-Galois.
\end{proof}

\subsection{An action for generalized Taft algebras at non-zero parameter $\alpha$}

By rescaling the skew primitive, we have a Hopf isomorphism $T(n,m,\alpha)\cong T(n,m,1)$ whenever $\alpha$ is nonzero.  We wish to produce a central simple algebra and corresponding action for $T(n,m,1)$.
\par

Take $K=\mbb{C}(w)$ and consider the polynomial $p_{n,m}(X)=(X^m-1)^{\frac{n}{m}}-w$ over $K$.  Take $s=n/m$ and $\zeta$ a primitive $n$-th root of $1$ with $\zeta^s=q$.
\par

We let $L$ denote the splitting field of $p_{n,m}$ over $K$.  The field $L$ is generated, over $K$, by a choice of $s$-th root $\sqrt[s]{w}$ for $w\in K$ and solutions $c_j$ to the equation $X^m-\zeta^{jm}\sqrt[s]{w}-1=0$, for $1\leq j\leq s$.
\par

We note that scalings of the $c_k$ by $m$-th roots of unity provide all $n$ (distinct) roots to our equation $p_{n,m}\in K[X]$.  Consider the automorphisms $g_i$ and $\sigma$ of $L$ over $K$ defined by $g_i(c_j)=q^{\delta_{ij}}c_j$ and $\sigma(c_j)=c_{j+1}$.  (We abuse notation so that $c_{s+1}=c_1$.)  By comparing the degree of $L$ over $K$ with the order of the subgroup of $\operatorname{Aut}_K(L)$ generated by the $g_i$ and $\sigma$, one finds that the extension $L/K$ is Galois with Galois group
\[
\mrm{Gal}(L/K)=\langle g_i:1\leq i\leq s\rangle\rtimes \langle \sigma\rangle\cong (\mbb{Z}/m\mbb{Z})^s\rtimes \mbb{Z}/s\mbb{Z}.
\]
\par

We consider the Ore extension $L[t;\sigma]$.  This algebra is a domain which is finite over its center, and we take
\[
Q=\Frac(L[t;\sigma]).
\]
We produce below an action of $T(n,m,1)$ on $Q$.
\par

We first extend the automorphism $g|_L=\prod_{i=1}^sg_i:L\to L$, $c_i\mapsto qc_i$, to an automorphism $g:Q\to Q$ such that $g(t)=\zeta t$.  We note that such an extension is well-defined since $(g|_L)\sigma=\sigma(g|_L)$.  The automorphism $g$ is order $n$, and we obtain an action of $\mbb{Z}/n\mbb{Z}=G(T(n,m,1))$ on $Q$.

\begin{lemma}
Take $Q$ as above, with the given $\mbb{Z}/n\mbb{Z}$-action.  Then, at arbitrary $a\in Q$, each element $c_i\in Q$ satisfies
\[
c_i^ma-(g^m\cdot a)c_i^m=(1-g^m)\cdot a.
\]
\end{lemma}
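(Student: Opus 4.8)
The plan is to reduce the asserted identity to the single statement that conjugation by one element of $L$ realizes the automorphism $g^m$ on all of $Q$. Write $\omega=\sqrt[s]{w}\in L$; by the definition of the roots $c_i$ we have $c_i^m-1=\zeta^{im}\omega$, a nonzero element of the field $L$ and hence invertible in the division algebra $Q$. Moving the term $(g^m\cdot a)c_i^m$ and the summand $a$ to opposite sides, the claimed equation $c_i^m a-(g^m\cdot a)c_i^m=(1-g^m)\cdot a$ becomes $(c_i^m-1)a=(g^m\cdot a)(c_i^m-1)$, valid for every $a\in Q$. Cancelling the central scalar $\zeta^{im}$ and the invertible element $\omega$, this is equivalent to the automorphism identity $\omega a\omega^{-1}=g^m\cdot a$ for all $a\in Q$. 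So it suffices to show that conjugation by $\omega$ and $g^m$ coincide as automorphisms of $Q$.

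Both conjugation by $\omega$ and $g^m$ are $\mathbb{C}$-algebra automorphisms of $Q=\Frac(L[t;\sigma])$, so it is enough to verify that they agree on the generating set $L\cup\{t\}$. On $L$ the agreement is immediate: $\omega$ lies in the commutative field $L$, so conjugation by $\omega$ restricts to the identity there, while $g^m$ fixes each $c_i$ because $g(c_i)=qc_i$ and $q^m=1$, and hence $g^m$ fixes $L$ pointwise as well. The one remaining, and decisive, computation is the action on $t$.

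To treat $t$, I first determine $\sigma(\omega)$. Applying $\sigma$ to the relation $c_j^m=1+\zeta^{jm}\omega$ and using $\sigma(c_j)=c_{j+1}$ gives $1+\zeta^{(j+1)m}\omega=\sigma(c_j^m)=1+\zeta^{jm}\sigma(\omega)$, whence $\sigma(\omega)=\zeta^m\omega$. Combined with the commutation relation between $t$ and $L$ in the Ore extension, this yields $\omega t\omega^{-1}=\zeta^m t$, which is exactly $g^m\cdot t=\zeta^m t$. With the two automorphisms matching on all generators, they are equal on $Q$, and the displayed identity follows. The main obstacle is precisely this last step: one must track the direction of the Ore relation together with the value $\sigma(\omega)=\zeta^m\omega$ carefully, so that the power of $\zeta$ produced by conjugating $t$ by $\omega$ matches the power coming from $g^m$; once these exponents agree, everything else is formal.
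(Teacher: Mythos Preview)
Your argument is correct and follows essentially the same route as the paper's proof. Both reduce the identity to showing that $(c_i^m-1)a=(g^m\cdot a)(c_i^m-1)$, observe $c_i^m-1$ is a root-of-unity multiple of $\omega=\sqrt[s]{w}$, note that $g^m$ acts trivially on $L$, and then use the key computation $\sigma(\omega)=\zeta^m\omega$ together with the Ore relation to handle the $t$-part; your framing as ``conjugation by $\omega$ implements $g^m$'' is a slightly cleaner way to package the same verification on generators that the paper carries out on monomials $bt^r$ before passing to fractions.
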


\begin{proof}
Take $\zeta$ an $s$-th root of $q$ as above.  It suffices to provide the relation on $L[t;\sigma]$.  Any homogeneous element of $L[t;\sigma]$ may be written in the form $bt^r$, with $b\in L$.  Note that $c_i^m-1=\tau w^{1/s}$ for each $i$, where $\tau$ is a root of unity, and $\sigma(w^{1/s})=\zeta^mw^{1/s}$.  Note also that $g^m|_L=id_L$.  We therefore have
\[
\begin{array}{rl}
\tau^{-1}\left((c_i^m-1)bt^r-(g^m\cdot bt^r)(c_i^m-1)\right) & = w^{1/s}bt^r- b(g^m\cdot t^r)w^{1/s}\\
&=w^{1/s}bt^r- \zeta^{mr}bt^rw^{1/s}\\
&=bt^r\sigma^r(w^{1/s})- \zeta^{mr}bt^rw^{1/s}\\
&=0.
\end{array}
\]
Thus $(c_i^m-1)y-(g^m\cdot y)(c_i^m-1)=0$ for all $y\in L[t;\sigma]$.  The fact that $(c_i^m-1)$ commutes with $1=yy^{-1}$ implies that $(c_i^m-1)$ satisfies the same relation for all $a$ in the ring of fractions $Q$.  We rearrange to arrive at the desired equation.
\end{proof}

\begin{proposition}\label{prop:gen_taft1}
For any non-zero $\alpha\in \mbb{C}$, there is an inner faithful $T(n,m,\alpha)$-action on the central simple division algebra $Q=\Frac(L[t;\sigma])$.  This action is not Hopf-Galois.
\end{proposition}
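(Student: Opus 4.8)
The plan is to establish the three assertions — existence of the action, inner faithfulness, and failure of the Hopf-Galois property — by appealing to Theorem~\ref{thm:TQstructure}, Corollary~\ref{cor:fate}, and Theorem~\ref{thm:Gal} respectively. For the action, since $T(n,m,\alpha)\cong T(n,m,1)$ for every nonzero $\alpha$ it would suffice to treat one value, but it is just as easy to handle all $\alpha$ at once. The automorphism $g$ gives $Q$ a $\mbb{Z}/n\mbb{Z}$-grading $Q=\oplus_i Q_i$ on which $g$ acts by $\zeta^i$; as $g(c_j)=qc_j=\zeta^{n/m}c_j$ we have $c_j\in Q_{n/m}$. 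Fixing $\lambda\in\mbb{C}$ with $\lambda^m=-\alpha$ and setting $c:=\lambda c_1$, the preceding lemma yields $c^ma-\zeta^{m|a|}ac^m=\lambda^m(1-\zeta^{m|a|})a=\alpha(\zeta^{m|a|}-1)a$ for homogeneous $a$, which is precisely the hypothesis of the converse direction of Theorem~\ref{thm:TQstructure}. That result then produces the $T(n,m,\alpha)$-module algebra structure on $Q$, with $x\cdot a=ca-\zeta^{|a|}ac$.

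For inner faithfulness I would invoke Corollary~\ref{cor:fate}: as $\Prim(T(n,m,\alpha))'=\mbb{C}x$, it is enough that $G=\langle g\rangle$ act faithfully and that $x$ act nontrivially. Faithfulness is immediate since $g$ has order $n$. To see $x$ acts nontrivially, I would test on the homogeneous element $c_1\in Q_{n/m}$: because $c$ and $c_1$ both lie in the commutative field $L$, we get $x\cdot c_1=cc_1-\zeta^{n/m}c_1c=\lambda(1-q)c_1^2$, which is nonzero since $q\neq 1$ and $Q$ is a division algebra.

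The crux is the failure of the Hopf-Galois property, for which Theorem~\ref{thm:Gal} requires me to show $\mrm{rank}_{Q^H}Q\neq \dim H=mn$. I would compute $Q^H$ outright. Since $g$-invariance forces $a\in Q_0$, and $x$ acts on $Q_0$ by the ordinary commutator $a\mapsto ca-ac$, one has $Q^H=Q_0\cap C_Q(c)$. Next I would analyze the Ore extension: with $\sigma$ of order $s=n/m$ one finds $Z(L[t;\sigma])=L^\sigma[t^s]$, so $Z(Q)=L^\sigma(t^s)$ and $Q$ is a cyclic division algebra of degree $s$ whose maximal subfield is $L(t^s)$. The key point is that $c$ already generates this maximal subfield: from $c^m=-\alpha(\zeta^m w^{1/s}+1)$ one recovers $w^{1/s}\in Z(Q)(c)$, whence $Z(Q)(c)=Z(Q)(w^{1/s})=L(t^s)$ and therefore $C_Q(c)=L(t^s)$. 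Thus $Q^H=L(t^s)^g$. Since $g$ fixes $w^{1/s}$ but scales $c_1$ and $t^s$ by $q$, it acts on $L(t^s)$ with order exactly $m$, giving $[L(t^s):Q^H]=m$; combined with $\dim_{L(t^s)}Q=s$ this yields $\mrm{rank}_{Q^H}Q=ms=n$. As $n\neq mn$ for $m>1$, the action is not Hopf-Galois.

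The main obstacle is this last computation. The delicate steps are identifying the center $Z(Q)=L^\sigma(t^s)$ of the Ore extension and, above all, recognizing that the single element $c$ generates the \emph{entire} maximal subfield $L(t^s)$, via the $g$-invariant element $w^{1/s}$ hidden inside $c^m$. It is exactly this feature — absent in the $\alpha=0$ setting of Proposition~\ref{prop:gen_taft0} — that collapses the centralizer $C_Q(c)$ down to $L(t^s)$ and forces $[L(t^s):Q^H]=m$ rather than the index $m^2$ that the Hopf-Galois equality $\mrm{rank}_{Q^H}Q=mn$ would demand.
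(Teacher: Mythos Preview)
Your argument is correct and, for the existence and inner-faithfulness parts, follows the paper's approach essentially verbatim (the paper simply normalizes to $\alpha=1$ and takes $c=c_1$, whereas you scale by $\lambda$ to handle all nonzero $\alpha$ at once; either way the action comes from Theorem~\ref{thm:TQstructure} and faithfulness from Corollary~\ref{cor:fate} via $x\cdot c_1=(1-q)\lambda c_1^2\neq 0$).

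For the failure of the Hopf--Galois property your route is genuinely different from the paper's. The paper works inside the Ore extension: it computes the $G$-invariants $L[t;\sigma]^G=L_0[\alpha t^s]$ explicitly (with $\alpha\in L_{-s}$), observes that $c$ commutes with this subalgebra so that $G$-invariants and $T$-invariants coincide, and then reads off $\mrm{rank}_{L[t;\sigma]^G}L[t;\sigma]=sm$ before localizing. You instead work division-algebraically in $Q$: identifying $Z(Q)=L^\sigma(t^s)$, recognizing $L(t^s)$ as a maximal subfield, and using the key observation that $c^m$ recovers $w^{1/s}$ (which together with $L^\sigma$ generates all of $L$) to conclude $C_Q(c)=L(t^s)$, whence $Q^H=L(t^s)^g$ and $\mrm{rank}_{Q^H}Q=ms=n$. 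Your argument is cleaner conceptually---it explains \emph{why} the invariants collapse, via the double centralizer theorem, and pinpoints the role of $\alpha\neq 0$ in making $c$ generate the full maximal subfield---while the paper's computation is more hands-on and avoids invoking structure theory of cyclic algebras. One small remark: the two displayed versions of relation~\eqref{eq:comm_rel} in Theorem~\ref{thm:TQstructure} differ by a sign, so whether one takes $\lambda^m=\alpha$ or $\lambda^m=-\alpha$ depends on which convention is used; this does not affect your argument.
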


\begin{proof}
We may assume $\alpha=1$.  Take $s=n/m$, $G=G(T(n,m,1))=\langle g\rangle$, and let $\zeta$ be the give primitive $n$-th root of unity with $\zeta^s=q$.  We provide a $G$-action on $Q$ by letting $g$ act as the above automorphism $g(c_i)=qc_i$, $g(t)=\zeta t$.  If we grade $Q$ as $Q=\oplus_{i=0}^{n-1}Q_i$, with $g|_{Q_i}=\zeta^i\cdot-$, then $c_i\in Q_s$, and any choice $c=c_i$ provides an element which satisfies the equation
\[
c^ma-(g^m\cdot a)c^m=(1-g^m)\cdot a
\]
at each $a\in Q$.  We therefore apply Theorem~\ref{thm:TQstructure} to arrive at an explicit action of $T(n,m,1)$ on $Q$.
\par

As for inner faithfulness, the fact that $G$ acts faithfully on $Q$ is clear, and the fact that $\mrm{ad}_\mrm{sk}(c)\neq 0$ follows from the fact that $\mrm{ad}_\mrm{sk}(c)(c)=(1-q)c^2\neq 0$.  Thus the action of $T(n,m,1)$ is inner faithful by Corollary~\ref{cor:fate}.
\par

As for the Hopf-Galois property, we consider the invariants $L[t;\sigma]^G$ and decompose $L=\oplus_{k=0}^{m-1}L_{ks}$, with $g|_{L_{ks}}=q^k\cdot-$.  Then $L=L_0[\alpha]$, for arbitrary nonzero $\alpha\in L_{-s}$, and one calculates that the invariants is a polynomial ring $L[t;\sigma]^G=L_0[\alpha t^s]$.  Now we have
\[
L[t;\sigma]=L_0[\alpha t^s]\cdot (\oplus_{j=0}^{s-1}Lt^j)=L_0[\alpha t^s]\cdot\{\alpha^at^b:0\leq a<m,\ 0\leq b<s\},
\]
from which one can conclude
\[
\mrm{rank}_{L[t;\sigma]^G}L[t;\sigma]=sm.
\]
Since $\sigma$ is order $s$, we have 
\[
L[t;\sigma]^G=L_0[\alpha t^s]\subset Z(L[t;\sigma]),
\]
and $\mrm{ad}_\mrm{sk}(c)|_{L[t;\sigma]^G}=0$.  Hence the $G$-invariants in $L[t;\sigma]$ is the entire $T(n,m,1)$-invariants.  We may write the fraction field as the localization
\[
Q=\Frac(L[t;\sigma])=\Frac(L[t;\sigma]^G)\ot_{L[t;\sigma]^G}L[t;\sigma]
\]
to find that $Q^T=Q^G=\Frac(L[t;\sigma]^G)$ and
\[
\dim_{Q^T}Q= \dim_{Q^G}Q=sm<nm=\dim T(n,m,1).
\]
Hence the action is not Hopf-Galois, by Theorem~\ref{thm:Gal}.
\end{proof}

\section{Actions of graded finite Cartan type algebras}

We consider a class of pointed Hopf algebras which generalize small quantum Borel algebras.  These are pointed, coradically graded, Hopf algebras of finite Cartan type.  We first recall the construction of these algebras, then provide corresponding central simple division algebras on which these Cartan type algebras act inner faithfully.

\subsection{Cartan type algebras (following~\cite{andruskiewitschschneider10})}

Let $V=\mbb{C}\{x_1,\dots,x_\theta\}$ be a braided vector space of diagonal type, with braiding matrix $[q_{ij}]$.  Rather, the coefficients $q_{ij}$ are such that $\mfk{c}_{V,V}(x_i\ot x_j)=q_{ij}x_j\ot x_i$, where $\mfk{c}_{V,V}$ is the braiding on $V$.  We assume that the $q_{ij}$ are roots of unity so that $V\in \YD(G)$ for a finite abelian group $G$.
\par

Following Andruskiewitsch and Schneider, we say $V$ is of {\it Cartan type} if there is an integer matrix $[a_{ij}]$ such that the coefficient $q_{ij}$ satisfy
\begin{equation}\label{eq:180}
q_{ij}q_{ji}=q_{ii}^{a_{ij}}.
\end{equation}
We always suppose $a_{ii}=2$ and $0\leq -a_{ij}<\mrm{ord}(q_{ii})$ for distinct indices $i,j$.  We say $V$ is of {\it finite Cartan type} if the associated Nichols algebra $B(V)$ is finite-dimensional.  We have the following fundamental result of Heckenberger.

\begin{theorem}[{\cite[Theorem 1]{heckenberger06}}]
Suppose $V$ is of Cartan type.  Then the Nichols algebra $B(V)$ is finite-dimensional if and only if the associated matrix $[a_{ij}]$ is of finite type, i.e. if and only if $[a_{ij}]$ is the Cartan matrix associated to a semisimple Lie algebra over $\mbb{C}$ up to permutation of the indices.
\end{theorem}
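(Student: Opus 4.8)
The plan is to analyze $B(V)$ through a PBW-type basis and then match the resulting combinatorial root system with the root system of the integer matrix $[a_{ij}]$. First I would invoke Kharchenko's theorem on PBW bases for Nichols algebras of diagonal type: there is a totally ordered family of homogeneous PBW generators $\{x_\beta\}_{\beta\in\Delta^+}$, obtained as iterated braided commutators of the $x_i$ and indexed by a set $\Delta^+$ of ``positive roots'', each carrying a height $N_\beta\in\{2,3,\dots\}\cup\{\infty\}$, such that the ordered monomials $x_{\beta_1}^{e_1}\cdots x_{\beta_k}^{e_k}$ with $0\leq e_j<N_{\beta_j}$ form a $\mbb{C}$-basis of $B(V)$. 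Consequently $B(V)$ is finite-dimensional if and only if $\Delta^+$ is finite \emph{and} every height $N_\beta$ is finite.

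Next I would pin down the heights. Each PBW generator $x_\beta$ has a well-defined self-braiding scalar $q_{\beta\beta}$, and the height $N_\beta$ is the order of $q_{\beta\beta}$ as a root of unity. Here the standing hypothesis that all $q_{ij}$ are roots of unity is essential: it guarantees $q_{\beta\beta}$ is always a root of unity, so $N_\beta<\infty$ for every $\beta$. This reduces the finiteness of $\dim B(V)$ entirely to the finiteness of the root set $\Delta^+$.

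The heart of the argument is then the identification of $\Delta^+$ with a Lie-theoretic root system. For $V$ of Cartan type, the relation $q_{ij}q_{ji}=q_{ii}^{a_{ij}}$ together with the bound $0\leq -a_{ij}<\mrm{ord}(q_{ii})$ forces the quantum Serre relations $(\operatorname{ad}_c x_i)^{1-a_{ij}}(x_j)=0$ to hold in $B(V)$, and governs the rank-two subalgebras generated by each pair $x_i,x_j$. Analyzing these rank-two pieces, one shows that the reflections of Heckenberger's Weyl groupoid act on the degrees of the PBW generators exactly as the Weyl group of $[a_{ij}]$ acts on roots, so that $\Delta^+$ coincides with the set of positive roots of $[a_{ij}]$. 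If $[a_{ij}]$ is of finite type this set is finite, whence $\dim B(V)=\prod_{\beta\in\Delta^+}N_\beta<\infty$; alternatively this direction can be seen by identifying $B(V)$ with the positive part of a Frobenius--Lusztig kernel. If instead $[a_{ij}]$ is of affine or indefinite type, its positive root system is infinite, and the reflection analysis produces infinitely many PBW generators, forcing $\dim B(V)=\infty$.

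The hard part will be the converse direction and the precise matching just described: proving that the combinatorial root set $\Delta^+$ genuinely equals the root system of $[a_{ij}]$, and in particular that a non-finite Cartan matrix produces an infinite $\Delta^+$. This is exactly the content of Heckenberger's theory of arithmetic root systems and the Weyl groupoid --- one must establish that the Weyl groupoid attached to $V$ is finite if and only if $[a_{ij}]$ is of finite type --- and it is carried out through a delicate rank-two case analysis combined with the invariance of $\Delta^+$ under the groupoid action. By comparison, the remaining ingredients (existence of the PBW basis, the height computation, and finite-dimensionality in the finite-type case) are comparatively standard.
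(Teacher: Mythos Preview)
The paper does not give its own proof of this statement: it is quoted verbatim as \cite[Theorem 1]{heckenberger06} and used as a black box, so there is nothing in the paper to compare your proposal against. Your outline is a reasonable high-level summary of the strategy in Heckenberger's work (Kharchenko's PBW basis, heights from self-braidings, and the Weyl groupoid/arithmetic root system machinery to identify $\Delta^+$ with the positive roots of $[a_{ij}]$), and you correctly flag that the substantive content lies in the rank-two classification and the finiteness of the Weyl groupoid, which you do not attempt to reproduce here.
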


Consider $V$ of finite Cartan type, we have the associated root system $\Phi$, with basis $\{\alpha_i\}_i$ indexed by a homogeneous basis for $V$.  Let $\Gamma$ be the associated union of Dynkin diagrams.  We decompose $\Phi$ into irreducible components
\[
\Phi={\coprod}_{I\in\pi_0(\Gamma)}\Phi_I.
\]
Throughout we assume the following two additional restrictions:
\begin{itemize}\label{item:ord}
\item $q_{ii}$ is of odd order.
\item $q_{ii}$ is of order coprime to $3$ when the associated component $\Phi_I$, with $\alpha_i\in I$, is of type $G_2$.
\end{itemize}

By~\cite[Lemma 2.3]{andruskiewitschschneider10} we have that $N_i=\mrm{ord}(q_{ii})$ is constant for all $i$ with associated simple roots $\alpha_i$ in a given component of the Dynkin diagram.  For $\gamma\in \Phi^+_I$ we take $N_\gamma=N_i$ for any $i$ in component $I$.

For finite Cartan type $V$ and $\gamma\in \Phi^+$ one has associated root vectors $x_\alpha$, which are constructed via iterated braided commutators as in~\cite{andruskiewitschschneider02,lusztig_book}.

\begin{theorem}[{\cite[Theorem 5.1]{andruskiewitschschneider10}}]\label{thm:present}
Suppose $R=\mfk{B}(V)$ is of Cartan type, and take $N_i=\mrm{ord}(q_{ii})$.  Then $R$ admits a presentation $R=TV/I$, where $I$ is generated by the relations
\begin{itemize}
\item {\rm (Nilpotence relations)} $x_\gamma^{N_\alpha}$ for $\gamma\in \Phi^+$;
\item {\rm ($q$-Serre relations)} $\mrm{ad}_\mrm{sk}(x_i)^{1-a_{ij}}(x_j)$;
\end{itemize}
\end{theorem}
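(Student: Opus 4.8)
The plan is to show that the kernel $I$ of the canonical surjection $TV\twoheadrightarrow \mfk{B}(V)$ is generated, as a two-sided ideal, by the listed nilpotence and $q$-Serre elements. This splits into two inclusions. First I would check that the proposed generators actually lie in $I$, i.e.\ that they vanish in $\mfk{B}(V)$. Then I would show that the quotient $\mcl{U}:=TV/J$, by the ideal $J$ these elements generate, already satisfies $\dim\mcl{U}\le \dim\mfk{B}(V)$, so that the induced surjection $\mcl{U}\twoheadrightarrow\mfk{B}(V)$ is forced to be an isomorphism and hence $J=I$.

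For the first inclusion, recall that $\mfk{B}(V)$ is the unique graded braided Hopf quotient of $TV$ generated in degree $1$ whose primitives are concentrated in degree $1$. In a braided Hopf algebra the braided commutator of two primitives is again primitive, so the iterated braided adjoint $\mrm{ad}_\mrm{sk}(x_i)^{1-a_{ij}}(x_j)$ is a primitive element of $TV$, of degree $2-a_{ij}>1$ since $a_{ij}\le 0$ for $i\neq j$; it therefore maps to $0$ in $\mfk{B}(V)$, the vanishing of the relevant quantum-binomial coefficients being exactly the Cartan identity $q_{ij}q_{ji}=q_{ii}^{a_{ij}}$. Similarly each root vector $x_\gamma$ generates a quantum-line-like subalgebra with self-braiding $q_{\gamma\gamma}$ a primitive $N_\gamma$-th root of unity, so the braided binomial theorem makes $x_\gamma^{N_\gamma}$ primitive of degree $>1$, hence zero in $\mfk{B}(V)$. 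The arithmetic hypotheses ($\operatorname{ord}(q_{ii})$ odd, coprime to $3$ in type $G_2$) enter precisely here, guaranteeing that $N_\gamma$ is the common value $N_i$ on the component of $\gamma$ and that no lower-degree degenerate primitives intervene.

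For the reverse inclusion---the genuinely hard part---I would produce an ordered-monomial spanning set for $\mcl{U}$. Fixing a convex order on $\Phi^+$, the goal is to show that the root vectors span $\mcl{U}$ through bounded monomials $x_{\gamma_1}^{a_1}\cdots x_{\gamma_t}^{a_t}$ with $0\le a_k<N_{\gamma_k}$, whence $\dim\mcl{U}\le \prod_{\gamma\in\Phi^+}N_\gamma=\dim\mfk{B}(V)$. The key technical input is a straightening procedure: one must derive from the $q$-Serre relations the Levendorskii--Soibelman type commutation rules $x_\alpha x_\beta-q_{\alpha\beta}\,x_\beta x_\alpha\in\operatorname{span}\{\text{ordered monomials in roots strictly between }\alpha,\beta\}$ for $\alpha>\beta$, and then verify confluence of the resulting rewriting system via the Diamond Lemma. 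Rather than reprove these root-vector identities by hand in each Dynkin type, I would invoke Kharchenko's PBW theorem for Nichols algebras of diagonal type, which furnishes an abstract PBW basis in Lyndon hyperletters with prescribed heights; the theorem of Heckenberger quoted above then identifies the hyperletter generators with the $x_\gamma$ and their heights with the $N_\gamma$.

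The main obstacle is exactly this last step: showing that the $q$-Serre relations \emph{alone}, together with nilpotency, suffice to produce the full commutation calculus among root vectors needed for straightening, with no further, non-Serre relations surviving in $\mcl{U}$. This is where the finite-Cartan-type hypothesis, the explicit Lusztig-style construction of the $x_\gamma$ as iterated braided commutators, and the restrictions on $\operatorname{ord}(q_{ii})$ all do real work, since these are what exclude the extra relations that appear for Nichols algebras of non-Cartan or super type.
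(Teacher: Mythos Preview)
The paper does not prove this theorem at all: it is quoted verbatim as \cite[Theorem 5.1]{andruskiewitschschneider10} and used as a black box. There is therefore nothing in the paper to compare your argument against; any correct proof would have to be supplied from the cited source or reconstructed independently, as you attempt to do.

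Your outline is the standard one and is essentially what Andruskiewitsch--Schneider carry out, but one step is stated incorrectly. You write that ``the braided commutator of two primitives is again primitive'' and deduce that each $\mrm{ad}_\mrm{sk}(x_i)^{1-a_{ij}}(x_j)$ is primitive in $TV$. The first assertion is false for diagonal braidings in general: for primitives $x_i,x_j$ one computes that $x_ix_j-q_{ij}x_jx_i$ has an extra term $(1-q_{ij}q_{ji})$ times a tensor of degree-one pieces in its coproduct, so $\mrm{ad}_\mrm{sk}(x_i)(x_j)$ is primitive only when $q_{ij}q_{ji}=1$. What is true, and what actually makes the argument work, is that the \emph{specific} iterate $\mrm{ad}_\mrm{sk}(x_i)^{1-a_{ij}}(x_j)$ is primitive precisely because the Cartan condition $q_{ij}q_{ji}=q_{ii}^{a_{ij}}$ forces the relevant $q$-binomial obstruction to vanish at that exponent. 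You allude to this in the next clause, so the conclusion is right, but the justification as written would also ``prove'' that $\mrm{ad}_\mrm{sk}(x_i)(x_j)$ vanishes in $B(V)$ for all $i\neq j$, which is false. The rest of your plan (PBW spanning set via Levendorskii--Soibelman-type commutation, dimension comparison) is the correct strategy and matches the literature.
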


\subsection{Actions of finite Cartan type algebras}

We call a Hopf algebra $H$ of {\it (finite) Cartan type} if $H=B(V)\rtimes G$ for $V$ of (finite) Cartan type and $G$ a finite abelian group.  For a $G\times G^\vee$-homogeneous basis vectors $x_i\in V$ we write $g_i$ for the group element associated to $x_i$, $\Delta_H(x_i)=x_i\ot 1+g\ot x_i$, and $\chi_i$ for the associated character $\mrm{Ad}_g(x_i)=\chi_i(g)x_i$.

\begin{theorem}\label{thm:cart_type}
Take $H=B(V)\rtimes G$ of finite Cartan type, and let $[q_{ij}]$ be the braiding matrix for $V=\mbb{C}\{x_1,\dots, x_\theta\}$.  Let $[a_{ij}]$ be the matrix encoding the relations~\eqref{eq:180}, and suppose that the $x_i$ are ordered so that $[a_{ij}]$ is block diagonal with each block a standard Cartan matrix associated to a Dynkin diagram.  Then for any subset $Y=\{\mu_1,\dots, \mu_t\}\subset G^\vee$ there is an $H$-action on the algebra
\[
A(Y)=\frac{\mbb{C}\langle c_1,\dots, c_\theta,w_1,\dots, w_t\rangle}{(c_ic_j-q_{ij}c_jc_i,\ c_kw_m-\mu_m(g_k)w_mc_k: i<j)}
\]
and on the central simple division algebra $Q(Y)=\Frac(A(Y))$.  This action is uniquely specified by the values on the generators
\[
g\cdot c_i=\chi_i(g)c_i,\ \ x_j\cdot c_i=c_jc_i-q_{ji} c_jc_i,\ \ g\cdot w_k=\mu_k(g)w_k,\ \ x_l\cdot w_k=0,
\]
and is inner faithful if and only if the subset $\{\chi_i\}_{i=1}^\theta\cup Y$ generates $G^\vee$.
\end{theorem}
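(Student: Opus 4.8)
\emph{Setup and candidate action.} The plan is to realize the action by the recipe behind Theorem~\ref{thm:TQstructure}: grade $A(Y)$ by $G^\vee$ with $\deg c_i=\chi_i$ and $\deg w_k=\mu_k$, let $G$ act through this grading, and let each $x_i$ act by the inner skew derivation $D_i:=\mrm{ad}_{\mrm{sk}}(c_i)$, $D_i(a)=c_ia-(g_i\cdot a)c_i$. First I would check that $A(Y)$ is an iterated Ore extension in the $c_i,w_k$, hence a Noetherian domain; since each $q_{ij}$ and each $\mu_k(g_l)$ is a root of unity, a large common power of the generators is central, so $A(Y)$ is module-finite over its center and $Q(Y)=\Frac(A(Y))$ is a central simple division algebra. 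Each $D_i$ preserves $A(Y)$, is a $(g_i\cdot)$-skew derivation, and has character $\chi_i$; the latter two facts give the cross relation $\sigma_g D_i\sigma_g^{-1}=\chi_i(g)D_i$, so $x_i\mapsto D_i$, $g\mapsto (g\cdot)$ extends to an algebra map $\phi\colon TV\rtimes G\to\End_{\mbb C}(A(Y))$. By Theorem~\ref{thm:indQ} any action produced on $A(Y)$ extends uniquely to $Q(Y)$, so it suffices to build it on $A(Y)$.

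\emph{The crux: killing the relations.} The substance is that $\phi$ annihilates the defining ideal of $B(V)$, i.e.\ that the relations of Theorem~\ref{thm:present} act as zero. For the simple-root nilpotence I would use a quantum Frobenius for skew derivations: writing $D_i=\lambda_{c_i}-\rho_{c_i}(g_i\cdot)$, the two summands $q_{ii}$-commute, so the quantum binomial theorem collapses $D_i^{N_i}$---the intermediate Gaussian binomials vanish since $q_{ii}$ is a primitive $N_i$-th root of unity, and oddness of $N_i$ clears the residual phase $q_{ii}^{N_i(N_i-1)/2}$---to $a\mapsto c_i^{N_i}a-(g_i^{N_i}\cdot a)c_i^{N_i}$, which is $0$ because $c_i^{N_i}$ is central up to the character $\chi_i^{N_i}$ in the quantum affine space. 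For the $q$-Serre relations the key element-level input is $\mrm{ad}_{\mrm{sk}}(c_i)^{1-a_{ij}}(c_j)=0$ in $A(Y)$: for $i<j$ it is immediate from $c_ic_j=q_{ij}c_jc_i$, and for $i>j$ an induction in the quantum plane gives $\mrm{ad}_{\mrm{sk}}(c_i)^{\ell}(c_j)=q_{ji}^{-\ell}\big(\prod_{m=0}^{\ell-1}(1-q_{ii}^{a_{ij}+m})\big)c_jc_i^{\ell}$, whose product acquires the factor $1-q_{ii}^{0}=0$ at $\ell=1-a_{ij}$, using $q_{ij}q_{ji}=q_{ii}^{a_{ij}}$.

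\emph{Main obstacle.} The delicate point---the step I expect to be hardest---is the passage from these element identities to the operator statements $\phi(r)=0$. Because the $D_i$ do \emph{not} satisfy the quantum-affine-space relations, there is no adjoint-action shortcut: a composite such as $D_iD_j-q_{ij}D_jD_i$ is neither inner nor a skew derivation, and it carries a genuine correction term over $\mrm{ad}_{\mrm{sk}}\big(\mrm{ad}_{\mrm{sk}}(c_i)(c_j)\big)$. Hence the vanishing of each relation operator must be checked directly in $\End_{\mbb C}(A(Y))$, and I would route the bookkeeping through the universal equations of Section~\ref{sect:univ} to make it uniform. The same caveat makes the nilpotence of the \emph{non-simple} root vectors the worst case: there $X_\gamma^{N_\gamma}=0$ is not inherited from an element being central, and I would extract it by a direct calculation in which iterating $X_\gamma$ on a monomial multiplies the coefficient by successive quantum integers $1-q_{ii}^{k}$, one of which vanishes at the $N_\gamma$-th step. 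Treating one irreducible component of the Dynkin diagram at a time keeps only finitely many root vectors and a single order $N_\gamma$ in play, so the computation stays finite.

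\emph{Inner faithfulness.} Finally I would apply Corollary~\ref{cor:fate}. The characters occurring in the grading are exactly those in the subgroup $\langle\chi_i,\mu_k\rangle\le G^\vee$, so $G$ acts faithfully precisely when this subgroup is all of $G^\vee$, i.e.\ when $\{\chi_i\}\cup Y$ generates $G^\vee$; this already gives the ``only if'' direction, since an inner faithful action forces $G$ to act faithfully. For the converse, $\Prim(H)'=V$ maps injectively into $\End_{\mbb C}(A(Y))$: grouping the $x_i$ by character and reading off the coefficient of $c_i^2$ in $D_i(c_i)=(1-q_{ii})c_i^2$ shows the $D_i$ are linearly independent. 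Corollary~\ref{cor:fate} then yields inner faithfulness whenever $\{\chi_i\}\cup Y$ generates $G^\vee$, completing the equivalence.
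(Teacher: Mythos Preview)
Your diagnosis of the main obstacle is exactly right: the passage from the element-level identities in $A(Y)$ to the operator identities $\phi(r)=0$ in $\End_{\mbb C}(A(Y))$ is the whole content of the theorem, and your proposed resolution of it is where the gap lies. The paper does \emph{not} attack this by direct computation. Instead, it observes that the $c$-part $S=S_{\mrm{ord}}$ of $A(Y)$ is a \emph{quotient of the pre-Nichols algebra} $R^{\mrm{pre}}$ (this is checked in Section~4.4 by verifying that the ordered commutator relations imply the $q$-Serre relations and the exceptional relations), so the operators $D_i=\mrm{ad}_{\mrm{sk}}(c_i)$ are literally the adjoint action of $x_i\in R^{\mrm{pre}}$ on that quotient. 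Since the adjoint action of any Hopf algebra on itself (and on its two-sided quotients) is an algebra map, one gets for free an algebra homomorphism $R^{\mrm{pre}}\to\End_{\mbb C}(S)$ sending $x_i\mapsto D_i$; the $q$-Serre relations vanish immediately because they already hold in $R^{\mrm{pre}}$. The nilpotence relations $x_\gamma^{N_\gamma}$---including the non-simple ones you flag as the worst case---are handled by the structural input of Andruskiewitsch--Schneider (Proposition~\ref{prop:Z0centr}): the subalgebra $Z_0=\langle x_\gamma^{N_\gamma}\rangle$ lies in the total braided center of $R^{\mrm{pre}}$, and Lemma~\ref{prop:881} deduces from this that the adjoint action of $R^{\mrm{pre}}$ factors through $R=B(V)$. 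No computation with $\phi(x_\gamma)^{N_\gamma}$ on monomials ever occurs.

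Your plan to check $\phi(x_\gamma^{N_\gamma})=0$ by ``iterating $X_\gamma$ on a monomial'' and watching a quantum integer vanish is not a proof as stated: $X_\gamma=\phi(x_\gamma)$ is a complicated polynomial in non-commuting operators $D_i$ and $g_j\cdot$, and there is no evident mechanism producing a single factor $(1-q_{ii}^k)$ per iteration. Routing through Section~\ref{sect:univ} does not help either, since that section does not establish (and the paper does not claim) that the Cartan-type relations can be chosen primitive in $TV$; the universal algebra machinery there is downstream of, not a substitute for, the Andruskiewitsch--Schneider input. Your treatment of the simple-root nilpotence (via the quantum binomial collapse of $D_i^{N_i}$) and of inner faithfulness is fine and matches the paper, but the core step needs the pre-Nichols/$Z_0$ argument rather than a direct computation.
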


The proof of Theorem~\ref{thm:cart_type} is given in Section~\ref{sect:proofcartanQ}.  The main difficulty in producing such an action is showing that the proposed action does in fact satisfy the relations of $H$.

We note that the algebra $Q(Y)$ is not $H$-Galois outside of type $A_1$.  This follows by a rank calculation which we do not repeat here.  In type $A_1$ we have produced a Hopf-Galois action already in Proposition~\ref{prop:gen_taft0}.

\subsection{The pre-Nichols algebra}

Let $G$ be a finite abelian group.  Take $V$ in $\YD(G)$ of finite Cartan type, and fix $R=B(V)$.  Consider a basis $\{x_1,\dots,x_\theta\}$ for $V$, with each $x_i$ homogeneous with respect to the $G\times G^\vee$-grading.  We take $g_i=\deg_G(x_i)$ and $\chi_i=\deg_{G^\vee}(x_i)$.
\par

Let $[q_{ij}]$ be the braiding matrix for $V$.  We assume the orders $\mrm{ord}(q_{ii})$ are odd, and additionally that $\mrm{ord}(q_{ii})$ is coprime to $3$ in type $G_2$.  We recall here some work of Andruskiewitsch and Schneider.

\begin{theorem}[\cite{andruskiewitschschneider10}]
For $R=B(V)$ of finite Cartan type, the algebra
\[
R^\mrm{pre}:=TV/(q\text{-}\mrm{Serre\ relations})
\]
is a Hopf algebra in $\YD(G)$, with Hopf structure induced by the quotient $TV\to R^\mrm{pre}$.
\end{theorem}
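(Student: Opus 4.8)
The plan is to show that the quotient algebra $R^{\mathrm{pre}}=TV/(q\text{-Serre})$ inherits a Hopf algebra structure in $\YD(G)$ from the free braided Hopf algebra on $V$. Recall that the tensor algebra $TV$, with its canonical braided Hopf structure (in which every $x_i$ is primitive, $\Delta(x_i)=x_i\ot 1+1\ot x_i$, and the algebra structure on $TV\underline{\ot}TV$ uses the $\YD(G)$-braiding), is a Hopf algebra in $\YD(G)$. The Nichols algebra $B(V)$ is the quotient of $TV$ by the maximal Hopf ideal contained in degrees $\geq 2$, so the $q$-Serre relations — being among the defining relations of $B(V)$ by Theorem~\ref{thm:present} — certainly lie in this Hopf ideal. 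Thus I would \emph{not} reprove that $B(V)$ is a braided Hopf algebra; instead the content is that killing only the $q$-Serre elements (and not yet the nilpotence relations $x_\gamma^{N_\alpha}$) already produces a braided \emph{bi}ideal, so that the intermediate quotient $R^{\mathrm{pre}}$ is itself a braided Hopf algebra.

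First I would verify that the two-sided ideal $J$ generated by the elements $u_{ij}:=\mrm{ad}_\mrm{sk}(x_i)^{1-a_{ij}}(x_j)$ is a $\YD(G)$-submodule of $TV$: each $u_{ij}$ is $G\times G^\vee$-homogeneous (its $G$-degree is $g_i^{1-a_{ij}}g_j$ and its character is $\chi_i^{1-a_{ij}}\chi_j$), so $J$ is automatically graded and stable under the action and coaction, and the multiplication of $TV$ is a $\YD(G)$-morphism. Second, and this is the crux, I would check that $J$ is a coideal, i.e. $\Delta(u_{ij})\in J\ot TV+TV\ot J$ so that $\Delta$ descends to $\Delta:R^{\mathrm{pre}}\to R^{\mathrm{pre}}\underline{\ot}R^{\mathrm{pre}}$. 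The key computation is that each skew-commutator generator $u_{ij}$ is itself a skew primitive in $TV$ in the braided sense, namely
\[
\Delta(u_{ij})=u_{ij}\ot 1+1\ot u_{ij}
\]
in $TV\underline{\ot}TV$. This follows because $\mrm{ad}_\mrm{sk}(x_i)$ is a braided derivation (an inner one, $\mrm{ad}_\mrm{sk}(x_i)(y)=x_iy-(\text{braiding applied})$), and iterated braided commutators of primitives are primitive — a standard fact that one proves by induction on $1-a_{ij}$ using the braided Leibniz rule together with the quantum-integer identities that characterize exactly when the intermediate terms vanish. Granting primitivity of the generators, $\Delta(J)\subset J\ot TV+TV\ot J$ is immediate since $\Delta$ is an algebra map into the braided tensor product and $J$ is the ideal they generate. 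One also checks $\epsilon(u_{ij})=0$, which is clear since $u_{ij}$ has positive degree.

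The main obstacle is precisely establishing that the $q$-Serre elements $u_{ij}$ are braided primitives in $TV$. This is where the Cartan-type hypothesis $q_{ij}q_{ji}=q_{ii}^{a_{ij}}$ and the assumptions on the orders of the $q_{ii}$ enter: the braided Leibniz rule produces correction terms governed by $q$-binomial coefficients in the base $q_{ii}$, and these vanish exactly because of the Serre-type balance built into the Cartan integers, in the same way as in Lusztig's treatment~\cite{lusztig_book} and in~\cite{andruskiewitschschneider02}. Once primitivity is in hand, the remaining verifications are formal: the induced $\Delta$ is coassociative and counital because these hold in $TV$ and descend to the quotient, the multiplication $R^{\mathrm{pre}}\underline{\ot}R^{\mathrm{pre}}\to R^{\mathrm{pre}}$ is a morphism in $\YD(G)$ by functoriality, and the antipode $S_{TV}$ preserves $J$ (since $S$ sends braided primitives to their negatives up to the braiding, hence sends each $u_{ij}$ into $J$) and therefore descends to an antipode $S_{R^{\mathrm{pre}}}$. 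This gives $R^{\mathrm{pre}}$ the asserted structure of a Hopf algebra in $\YD(G)$ with Hopf maps induced by $TV\to R^{\mathrm{pre}}$, and by construction $B(V)$ is a further Hopf-algebra quotient of $R^{\mathrm{pre}}$ obtained by imposing the nilpotence relations.
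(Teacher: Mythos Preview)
The paper does not give its own proof of this statement: it is recorded as a theorem of Andruskiewitsch--Schneider and cited to~\cite{andruskiewitschschneider10} without argument. So there is no ``paper's proof'' to compare against; your proposal is essentially a reconstruction of the standard argument that presumably appears (in some form) in the cited reference.

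Your strategy is the right one and would work: show that the two-sided ideal $J$ generated by the $u_{ij}=\mrm{ad}_\mrm{sk}(x_i)^{1-a_{ij}}(x_j)$ is a Hopf ideal in the braided Hopf algebra $TV$, by checking that each $u_{ij}$ is a braided primitive. One small muddling worth flagging: the primitivity of the $u_{ij}$ does \emph{not} hinge on the Cartan condition or on any $q$-binomial vanishing. The relevant general fact is that in any braided Hopf algebra the braided commutator of two primitives is again primitive; iterating, $\mrm{ad}_\mrm{sk}(x_i)^n(x_j)$ is primitive for every $n\geq 0$, with no hypotheses on the $q_{ij}$ beyond those needed to make sense of the braiding. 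The Cartan-type hypothesis~\eqref{eq:180} and the order restrictions on the $q_{ii}$ enter elsewhere --- they are what guarantee (via Theorem~\ref{thm:present}) that the $u_{ij}$ actually lie in the kernel of $TV\to B(V)$, so that $R^{\mrm{pre}}$ surjects onto $B(V)$, and they are used later in the paper in the analysis of $Z_0$ and the root vectors. Once you separate these roles, the verification that $J$ is a biideal with $S(J)\subset J$ is exactly as you describe and is purely formal.
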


We refer to $R^\mrm{pre}$ as the distinguished pre-Nichols algebra associated to $R$, following Angiono~\cite{angiono16}.  For $H=R\rtimes G$ we call $H^\mrm{pre}:=R^\mrm{pre}\rtimes G$ the ADK form of $H$, in reference to Angiono, de Concini, and Kac.
\par

As with the usual de Concini-Kac algebra, there is an action of the braid group of $R^\mrm{pre}$ which gives us elements $x_\gamma=T_\sigma(x_i)$ as in~\cite{andruskiewitschschneider02,lusztig_book}.

\begin{theorem}[{\cite[Theorem 2.6]{andruskiewitschschneider10}}]
Let $Z_0$ be the subalgebra of $R^\mrm{pre}$ generated by the powers $x_\gamma^{N_\gamma}$.  The subalgebra $Z_0$ is a Hopf subalgebra in $R^\mrm{pre}$.
\end{theorem}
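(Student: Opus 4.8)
The plan is to verify, for $Z_0$, the two closure conditions that make a subalgebra of $R^{\mrm{pre}}$ into a Hopf subalgebra inside $\YD(G)$: that $\Delta_{R^{\mrm{pre}}}(Z_0)\subseteq Z_0\underline{\ot}Z_0$ and that $S_{R^{\mrm{pre}}}(Z_0)\subseteq Z_0$, the remaining requirements (containing the unit and closure under $\epsilon$) being immediate. Since $Z_0$ is by construction the subalgebra generated by the root-vector powers $x_\gamma^{N_\gamma}$, and since $\Delta$ and $S$ are respectively a braided algebra and braided anti-algebra map, it suffices to check both conditions on each generator $x_\gamma^{N_\gamma}$; closure on generators then propagates to all of $Z_0$.

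First I would recall the coproduct structure of the PBW root vectors. Fixing a convex (Levendorskii--Soibelman) order on $\Phi^+$ and producing the $x_\gamma=T_\sigma(x_i)$ via the braid group action on $R^{\mrm{pre}}$, one has the triangular formula
\[
\Delta_{R^{\mrm{pre}}}(x_\gamma)=x_\gamma\ot 1+1\ot x_\gamma+\sum_i u_i\ot v_i,
\]
where each $u_i,v_i$ is a monomial in root vectors $x_\delta$ with $\delta$ strictly lower than $\gamma$ in the chosen order. This is the braided analogue of the de Concini--Kac coproduct formula; for a simple root $\gamma=\alpha_i$ the correction sum is empty, so $x_i$ is primitive in $R^{\mrm{pre}}$.

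The heart of the argument is the computation of $\Delta(x_\gamma^{N_\gamma})$. For a simple root $\gamma=\alpha_i$ I would apply the braided quantum binomial theorem: since $x_i$ is primitive and its self-braiding $q_{ii}$ is a primitive $N_i$-th root of unity, the intermediate $q_{ii}$-binomial coefficients $\binom{N_i}{k}_{q_{ii}}$ vanish for $0<k<N_i$, whence $x_i^{N_i}$ is again primitive and trivially has coproduct in $Z_0\underline{\ot}Z_0$. For a general root I would pass to the associated graded of $R^{\mrm{pre}}$ for the filtration attached to the convex order: there the correction terms drop out, the root vectors generate a braided quantum affine space in which every $x_\gamma$ is primitive, and the same binomial vanishing shows that $\mrm{gr}(x_\gamma^{N_\gamma})$ is primitive. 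Lifting back, $\Delta(x_\gamma^{N_\gamma})$ equals $x_\gamma^{N_\gamma}\ot 1+1\ot x_\gamma^{N_\gamma}$ modulo terms of strictly lower filtration degree, which I would show inductively (descending in $\gamma$) to be sums of products of the lower generators $x_\delta^{N_\delta}$, and hence to lie in $Z_0\underline{\ot}Z_0$.

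I expect the main obstacle to be precisely this last inductive bookkeeping: controlling the lower-order corrections in $\Delta(x_\gamma^{N_\gamma})$ and showing they are generated by the $x_\delta^{N_\delta}$ with $\delta<\gamma$, rather than by arbitrary root-vector monomials. The point is that $R^{\mrm{pre}}$ retains the $q$-Serre relations but discards the nilpotency relations $x_\gamma^{N_\gamma}$, so these powers survive and the Levendorskii--Soibelman straightening identities express commutators of root vectors in terms of lower ones with coefficients that, after passing to $N_\gamma$-th powers, land in the subalgebra generated by the $N_\delta$-th powers. Closure under the antipode then follows formally: by the braided antipode axiom and the coproduct just computed, $S(x_\gamma^{N_\gamma})$ is determined by $x_\gamma^{N_\gamma}$ together with the same lower generators, so $S(Z_0)\subseteq Z_0$, completing the verification that $Z_0$ is a Hopf subalgebra of $R^{\mrm{pre}}$.
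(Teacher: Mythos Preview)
The paper does not prove this theorem at all: it is quoted verbatim as \cite[Theorem 2.6]{andruskiewitschschneider10} and used as a black box, so there is no in-paper argument to compare your proposal against.

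That said, your sketch is a reasonable outline of the standard approach in the cited reference, and you have correctly located the genuine difficulty: the inductive control of the lower-order terms in $\Delta(x_\gamma^{N_\gamma})$. Your claim that these corrections can be shown, by descending induction on $\gamma$, to lie in the subalgebra generated by the $x_\delta^{N_\delta}$ with $\delta<\gamma$ is exactly the nontrivial step, and your hand-wave ``the Levendorskii--Soibelman straightening identities\ldots after passing to $N_\gamma$-th powers, land in the subalgebra generated by the $N_\delta$-th powers'' is where the actual work is hiding. In Andruskiewitsch--Schneider this is handled not by a direct inductive straightening but via a quantum Frobenius homomorphism (a braided Hopf algebra map whose image is $Z_0$), which sidesteps the combinatorics you are proposing to do by hand. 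If you want to carry out your version, you would need to make precise why raising to the $N_\gamma$-th power forces each surviving monomial in the correction terms to have every exponent divisible by the relevant $N_\delta$; this is true, but it requires the constancy of $N_\gamma$ on connected components and some care with the braiding coefficients, and is not as automatic as your last paragraph suggests.
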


For an algebra $B$ in $\YD(G)$ the total center $Z_{tot}(B)$ of $B$ is the maximal subalgebra for which the two diagrams
\[
\xymatrix{
Z\ot B\ar[rr]^{\mfk{c}}\ar[dr]_{mult} & & B\ot Z\ar[dl]^{mult}\\
 & B &
} \ \ 
\xymatrix{
B\ot Z\ar[rr]^{\mfk{c}}\ar[dr]_{mult} & & Z\ot B\ar[dl]^{mult}\\
 & B &
}
\]
commute.

\begin{proposition}[{\cite[Theorem 3.3]{andruskiewitschschneider10}}]\label{prop:Z0centr}
Consider $Z_0$ in $R^\mrm{pre}$, and take $\mfk{c}=\mfk{c}_{R^\mrm{pre},R^\mrm{pre}}$.
\begin{enumerate}
\item[(i)] The restriction of the braiding $\mfk{c}$ to $Z_0\ot R^\mrm{pre}$ is an involution, i.e. $\mfk{c}|_{Z_0\ot R^\mrm{pre}}=(\mfk{c}|_{R^\mrm{pre}\ot Z_0})^{-1}$.
\item[(ii)] The subalgebra $Z_0$ is contained in the total center of $R^\mrm{pre}$, $Z_0\subset Z_{tot}(R^\mrm{pre})$.
\end{enumerate}
\end{proposition}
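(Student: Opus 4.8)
The plan is to reduce both statements to a single biadditive form on the root lattice and then to isolate the one genuinely hard commutation relation. For a root $\gamma=\sum_i m_i\alpha_i$ write $g_\gamma=\prod_i g_i^{m_i}$ and $\chi_\gamma=\prod_i\chi_i^{m_i}$ for its $G$- and $G^\vee$-degrees, and set $b(\gamma,\beta)=\chi_\beta(g_\gamma)\chi_\gamma(g_\beta)$ for the symmetrized braiding form. For homogeneous $z\in Z_0$ and $a\in R^\mrm{pre}$ the diagonal braiding reads $\mfk{c}(z\ot a)=\chi_a(g_z)\,a\ot z$, so the composite $\mfk{c}|_{R^\mrm{pre}\ot Z_0}\circ\mfk{c}|_{Z_0\ot R^\mrm{pre}}$ multiplies $z\ot a$ by $\chi_a(g_z)\chi_z(g_a)=b(\deg z,\deg a)$. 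Hence part (i) is precisely the assertion $b(\deg z,\deg a)=1$ for all homogeneous $z\in Z_0$, $a\in R^\mrm{pre}$. Since $b$ is biadditive and $Z_0$ is generated by the $x_\gamma^{N_\gamma}$, it suffices to verify $b(N_\gamma\gamma,\alpha_j)=b(\gamma,\alpha_j)^{N_\gamma}=1$ for every $\gamma\in\Phi^+$ and simple $\alpha_j$.

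For part (i) I would evaluate $b$ on simple roots using the Cartan condition~\eqref{eq:180}: $b(\alpha_i,\alpha_j)=\chi_j(g_i)\chi_i(g_j)=q_{ij}q_{ji}=q_{ii}^{a_{ij}}$. By biadditivity $b(\gamma,\alpha_j)^{N_\gamma}=\prod_i\big(q_{ii}^{N_\gamma}\big)^{a_{ij}m_i}$. If $\alpha_j$ lies in a component different from that of $\gamma$ then every $a_{ij}=0$ and the product is trivially $1$; if $\alpha_j$ lies in the same component $I$ then $N_\gamma=N_i=\mrm{ord}(q_{ii})$ for each $i$ occurring, so $q_{ii}^{N_\gamma}=1$ and again the product is $1$. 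This establishes the involution property. (The hypotheses that $\mrm{ord}(q_{ii})$ be odd, and coprime to $3$ in type $G_2$, enter only through the cited fact that $N_i=\mrm{ord}(q_{ii})$ is constant on each connected component.)

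For part (ii) I would first observe that, \emph{given} (i), the two total-center diagrams are equivalent. The first diagram asks $za=\chi_a(g_z)\,az$ and the second asks $az=\chi_z(g_a)\,za$; comparing them shows each follows from the other exactly when $\chi_a(g_z)\chi_z(g_a)=b(\deg z,\deg a)=1$, which is (i). So it remains to verify the single braided-commutation relation $za=\chi_a(g_z)\,az$, and by multiplicativity in both arguments this reduces to the generators, i.e. to
\[
x_\gamma^{N_\gamma}\,x_j=\chi_j(g_\gamma)^{N_\gamma}\,x_j\,x_\gamma^{N_\gamma},\qquad \gamma\in\Phi^+,\ 1\le j\le\theta.
\]

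This last relation is the crux and I expect it to be the main obstacle: it is the skew-centrality of the $N_\gamma$-th powers of the root vectors in $R^\mrm{pre}$. My plan is to prove it first for simple $\gamma=\alpha_i$ by a braided binomial computation. Expanding $x_i^{N_i}x_j$ in terms of the iterated braided commutators $(\mrm{ad}_{\mfk c}x_i)^{k}(x_j)$, the coefficients are Gauss binomials $\binom{N_i}{k}_{q_{ii}}$, which vanish for $0<k<N_i$ since $q_{ii}$ is a primitive $N_i$-th root of unity; this kills the terms with $1\le k\le -a_{ij}$, while the terms with $k\ge 1-a_{ij}$ vanish by the $q$-Serre relation $\mrm{ad}_{\mfk c}(x_i)^{1-a_{ij}}(x_j)=0$ (both ranges being exhausted because $-a_{ij}<\mrm{ord}(q_{ii})$). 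Only the $k=0$ term survives, giving $x_i^{N_i}x_j=\chi_j(g_i)^{N_i}x_j x_i^{N_i}$. I would then transport this to arbitrary $\gamma$ using the braid-group automorphisms $T_\sigma$ of $R^\mrm{pre}$ recalled above, which carry $x_i^{N_i}$ to $x_\gamma^{N_\gamma}$ and respect the $G\times G^\vee$-grading up to the Weyl-group action, so that skew-centrality of $x_i^{N_i}$ propagates to skew-centrality of $x_\gamma^{N_\gamma}$. Combined with part (i), this yields $Z_0\subset Z_{tot}(R^\mrm{pre})$. The delicate points are checking that each $T_\sigma$ is a genuine braided algebra automorphism tracking the commutation scalars correctly, and the bookkeeping of the braided binomial expansion; both ultimately rest on the PBW and de Concini–Kac machinery underlying the cited results of Andruskiewitsch–Schneider.
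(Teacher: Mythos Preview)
The paper does not prove this proposition; it is quoted directly from \cite[Theorem 3.3]{andruskiewitschschneider10}, so there is no in-paper argument to compare against. I can only assess your sketch on its own terms.

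Your treatment of part (i) is correct and efficient: the obstruction to $\mfk{c}^2=\mrm{id}$ on $Z_0\ot R^{\mrm{pre}}$ is exactly the symmetrized form $b$, and the Cartan constraint $q_{ij}q_{ji}=q_{ii}^{a_{ij}}$ together with constancy of $N_i$ on connected components kills $b(N_\gamma\gamma,\alpha_j)$. Your reduction of (ii) to the single family $x_\gamma^{N_\gamma}x_j=\chi_j(g_\gamma)^{N_\gamma}x_jx_\gamma^{N_\gamma}$, and the braided-binomial argument for simple $\gamma=\alpha_i$, are also fine; in fact $\binom{N_i}{k}_{q_{ii}}=0$ already for every $0<k<N_i$, so only the top term $(\mrm{ad}_{\mfk c}x_i)^{N_i}(x_j)$ requires the $q$-Serre relation, and that term vanishes since $1-a_{ij}\le N_i$.

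The step that does not go through as written is the transport to non-simple $\gamma$ via $T_\sigma$. In both Lusztig's setting and the diagonal-type generalizations underlying \cite{andruskiewitschschneider02}, the braid group operators are algebra automorphisms of the \emph{Drinfeld double} (equivalently, the full quantum group), not of the positive part $R^{\mrm{pre}}$ alone: already for $\mfk{sl}_2$ one has $T_i(E_i)=-F_iK_i$. The root vectors $x_\gamma=T_\sigma(x_i)$ do land in $R^{\mrm{pre}}$, but applying $T_\sigma$ to the relation $x_i^{N_i}x_j=\chi_j(g_i)^{N_i}x_jx_i^{N_i}$ yields a relation between $x_\gamma^{N_\gamma}$ and $T_\sigma(x_j)$, and the latter need not lie in $R^{\mrm{pre}}$; nor do the elements $T_\sigma(x_j)$ generate $R^{\mrm{pre}}$. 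The argument in \cite{andruskiewitschschneider10} instead goes through the Levendorskii--Soibelman-type commutation relations among the PBW generators $x_\gamma$ (established there as part of the same circle of results), from which skew-centrality of each $x_\gamma^{N_\gamma}$ is read off directly without requiring $T_\sigma$ to preserve the positive part. Your sketch can be repaired either by carrying out the transport inside the double and then restricting, or by invoking those PBW commutation relations; as it stands the last step is a genuine gap, which you yourself flag as ``delicate.''
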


We note that in the case of the (classical) quantum De Concini-Kac-style Borel $U_q^{DK}(\mfk{b})$, the elements $E_\gamma^{N_\gamma}$ are actually central.  However, in general this will not be the case.  One can view the centrality in the classical de Concini-Kac setting as a consequence of the fact that $\mfk{c}|_{\mbb{C} E_\alpha^{N_\alpha}\ot U^{DK}_q(\mfk{b})}$ happens to be the trivial swap.

\subsection{Some technical lemmas}

\begin{lemma}\label{prop:881}
The adjoint action of $R^\mrm{pre}$ on itself factors through the quotient $R$.
\end{lemma}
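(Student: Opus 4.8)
The plan is to view the statement as the assertion that the braided adjoint action, regarded as a linear map $\mrm{ad}\colon R^{\mrm{pre}}\to\End_{\mbb{C}}(R^{\mrm{pre}})$, annihilates the kernel $I$ of the Hopf projection $R^{\mrm{pre}}\to R$. The first step is to record that $\mrm{ad}$ is an \emph{associative} action, i.e.\ $\mrm{ad}(ab)=\mrm{ad}(a)\circ\mrm{ad}(b)$. The cleanest way to see this is to identify the braided adjoint action of $R^{\mrm{pre}}$ with the restriction of the ordinary adjoint action of the bosonization $H^{\mrm{pre}}=R^{\mrm{pre}}\rtimes G$ to the subspace $R^{\mrm{pre}}\subset H^{\mrm{pre}}$, which is stable under $\mrm{ad}(R^{\mrm{pre}})$; the ordinary adjoint action of a Hopf algebra is an algebra map, so associativity follows. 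Consequently $\ker(\mrm{ad})$ is a two-sided ideal of $R^{\mrm{pre}}$. By Theorem~\ref{thm:present} the ideal $I$ is generated by the powers $x_\gamma^{N_\gamma}$, $\gamma\in\Phi^+$, so it suffices to prove $\mrm{ad}(x_\gamma^{N_\gamma})=0$.

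The core of the argument is this vanishing, for $z=x_\gamma^{N_\gamma}\in Z_0$. Writing the braided adjoint action as $\mrm{ad}(z)(b)=\sum z^{(1)}\big(z^{(2)}_{-1}\cdot b\big)S_R\big(z^{(2)}_0\big)$, I would use three inputs. First, $Z_0$ is a Hopf subalgebra, so $\Delta(z)\in Z_0\ot Z_0$, and since $z$ is $G$-homogeneous of some degree $g_z$ the components $z^{(1)},z^{(2)}$ may be taken homogeneous with degrees multiplying to $g_z$; in particular $z^{(2)}_{-1}\ot z^{(2)}_0=h\ot z^{(2)}$ with $h=\deg_G z^{(2)}$. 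Second, by Proposition~\ref{prop:Z0centr}(ii) each $z^{(1)}\in Z_0$ lies in the total center, which for a homogeneous element of degree $h'$ gives $z^{(1)}y=(h'\cdot y)\,z^{(1)}$ for all $y$. Applying this with $y=h\cdot b$ pulls $z^{(1)}$ to the right and recombines the two group factors, since $h'h=g_z$, yielding $\mrm{ad}(z)(b)=\sum (g_z\cdot b)\,z^{(1)}S_R(z^{(2)})$. Third, the braided antipode axiom $z^{(1)}S_R(z^{(2)})=\epsilon(z)$ (recalled in the definition of a Hopf algebra in $\YD(G)$) collapses the sum to $\mrm{ad}(z)(b)=\epsilon(z)\,(g_z\cdot b)$. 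As $\epsilon(x_\gamma^{N_\gamma})=\epsilon(x_\gamma)^{N_\gamma}=0$, this is zero.

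Combining the two steps, the generators $x_\gamma^{N_\gamma}$ lie in the two-sided ideal $\ker(\mrm{ad})$, hence $I\subseteq\ker(\mrm{ad})$ and $\mrm{ad}$ factors through $R^{\mrm{pre}}/I=R$, which is exactly the claim. The only genuinely delicate point is the middle paragraph: one must track the Yetter--Drinfeld coactions carefully so that the grouplike produced by the braiding and the one produced by the total-center relation recombine to the single element $g_z$, after which the antipode identity applies verbatim. I expect this bookkeeping, rather than any conceptual difficulty, to be the main obstacle; the associativity of $\mrm{ad}$ and the identification of $I$ via Theorem~\ref{thm:present} are essentially formal once the bosonization is introduced.
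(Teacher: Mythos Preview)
Your proposal is correct and follows essentially the same route as the paper: reduce to showing that the braided adjoint action of $Z_0$ is trivial (i.e.\ factors through $\epsilon$), then use multiplicativity of $\mrm{ad}$ together with the fact that $\ker(R^{\mrm{pre}}\to R)$ is generated by $Z_0^+$. The only cosmetic difference is in the bookkeeping of the core computation: the paper commutes $S(X_{i_2})$ leftward past $a$ and then invokes both parts of Proposition~\ref{prop:Z0centr}---the total center property (ii) to swap, and the involution property (i) to see that the two character factors cancel---arriving at $\mrm{ad}(X)(a)=\epsilon(X)a$; you instead commute $z^{(1)}$ rightward, use only part (ii), and let the $G$-degrees of $z^{(1)}$ and $z^{(2)}$ recombine to $g_z$, arriving at $\mrm{ad}(z)(b)=\epsilon(z)(g_z\cdot b)$. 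Both formulas vanish on $Z_0^+$, so the conclusion is the same; your variant has the mild advantage of not needing (i).
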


\begin{proof}
It suffices to show that the adjoint action restricted to $Z_0\subset R^\mrm{pre}$ is trivial, since the kernel of the projection $R^\mrm{pre}\to R$ is generated by the augmentation ideal for $Z_0$.  For any (homogeneous) $X\in Z_0$ and $a\in R^\mrm{pre}$ we have
\[
\begin{array}{rll}
\mrm{ad}_\mrm{sk}(X)(a) & =\sum_i \chi_a(g_{i_2})X_{i_1}a S(X_{i_2}) \\
 & =\sum_i \chi_a(g_{i_2})\chi_{i_2}(\mrm{deg}(a)) X_{i_1}S(X_{i_2})a & (\text{Prop.~\ref{prop:Z0centr} (ii)})\\
 & =\sum_i \chi_a(g_{i_2})\chi_a(g_{i_2})^{-1}X_{i_1}S(X_{i_2})a & (\text{Prop.~\ref{prop:Z0centr} (i)})\\
 &=(\sum_i X_{i_1}S(X_{i_2}))a\\
 & = \epsilon(X)a,
\end{array}
\]
where in the above calculation $g_{i_2}$ is the $G$-degree of $X_{i_2}$ and $\chi_{i_2}$ is the $G^\vee$-degree.  Hence $\mrm{ad}_\mrm{sk}|_{Z_0}$ factors through the counit, and the restriction of the adjoint action to $Z_0$ is trivial, as desired.
\end{proof}

Let us order the basis of primitives $P_\mrm{ord}=\{x_i\}_i$ so that the matrix $[a_{ij}]$ is block diagonal with each block a Cartan matrix of type $A$, $D$, $E$, etc.  We take
\[
S_\mrm{ord}:=TV/(\mrm{ad}_\mrm{sk}(x_i)(x_j):\ i<j),\ \ s_\mrm{ord}:=TV/(\mrm{ad}_\mrm{sk}(x_i)(x_j),x_i^{N_i}:\ i<j).
\]
These are both algebras in $\YD(G)$.  We let $c_i$ denote the images of the $x_i$ in $S_\mrm{ord}$ and/or $s_\mrm{ord}$.

\begin{lemma}
The projections $TV\to S_\mrm{ord}$ and $TV\to s_\mrm{ord}$ factor to give projections $R^\mrm{pre}\to S_\mrm{ord}$ and $R\to s_\mrm{ord}$ respectively.
\end{lemma}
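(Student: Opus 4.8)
The plan is to first identify $S_\mrm{ord}$ concretely and then verify the two families of relations separately. The defining relations $\mrm{ad}_\mrm{sk}(x_i)(x_j)=x_ix_j-q_{ij}x_jx_i$ (for $i<j$) are exactly the $q$-commutation relations $x_ix_j=q_{ij}x_jx_i$, so $S_\mrm{ord}$ is the quantum affine space on $c_1,\dots,c_\theta$, realized as an iterated Ore extension. By a standard normal-form (diamond lemma) argument the ordered monomials $c_1^{a_1}\cdots c_\theta^{a_\theta}$ form a $\mbb{C}$-basis; in particular $S_\mrm{ord}$ is $\mbb{Z}^\theta$-graded by $x_i$-content with each graded piece at most one-dimensional, and $s_\mrm{ord}=S_\mrm{ord}/(c_i^{N_i}:i)$. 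To produce $R^\mrm{pre}\to S_\mrm{ord}$ it then suffices to show that the $q$-Serre relations vanish in $S_\mrm{ord}$; to produce $R\to s_\mrm{ord}$ it suffices, by Theorem~\ref{thm:present}, to additionally check that each nilpotence relation $x_\gamma^{N_\gamma}$ vanishes in $s_\mrm{ord}$.

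For the $q$-Serre relations I would compute inside the quantum plane generated by a fixed pair $x_i,x_j$. Writing $x_ix_j=\lambda x_jx_i$ (so $\lambda=q_{ij}$ when $i<j$, while the relation $\mrm{ad}_\mrm{sk}(x_j)(x_i)=0$ forces $\lambda=q_{ji}^{-1}$ when $i>j$), one checks directly from $\mrm{ad}_\mrm{sk}(x_i)(y)=x_iy-\chi_y(g_i)yx_i$ that
\[
\mrm{ad}_\mrm{sk}(x_i)(x_i^kx_j)=\big(1-q_{ii}^k\,q_{ij}\lambda^{-1}\big)\,x_i^{k+1}x_j ,
\]
and hence $\mrm{ad}_\mrm{sk}(x_i)^{1-a_{ij}}(x_j)=\prod_{k=0}^{-a_{ij}}\big(1-q_{ii}^kq_{ij}\lambda^{-1}\big)\,x_i^{1-a_{ij}}x_j$. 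When $i<j$ the $k=0$ factor already vanishes. When $i>j$ I substitute $\lambda^{-1}=q_{ji}$ and use the Cartan identity~\eqref{eq:180}, $q_{ij}q_{ji}=q_{ii}^{a_{ij}}$, to rewrite the $k$-th factor as $1-q_{ii}^{k+a_{ij}}$; the factor at $k=-a_{ij}$ is then $1-q_{ii}^0=0$. The cross-component case $a_{ij}=0$ is exactly the instance $k=0$ of this. Thus every $q$-Serre relation maps to zero, giving $R^\mrm{pre}\to S_\mrm{ord}$, and a fortiori the corresponding relations vanish in the quotient $s_\mrm{ord}$.

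For the nilpotence relations I would exploit the grading. Each root vector $x_\gamma$ is an iterated braided commutator of the $x_i$, hence $\mbb{Z}^\theta$-homogeneous of content $\mathbf{b}=(b_1,\dots,b_\theta)$ with $\gamma=\sum_ib_i\alpha_i$. Since the corresponding graded piece of $S_\mrm{ord}$ is spanned by the single monomial $c_1^{b_1}\cdots c_\theta^{b_\theta}$, the image of $x_\gamma$ is a scalar multiple of that monomial, and so the image of $x_\gamma^{N_\gamma}$ is a scalar multiple of $c_1^{N_\gamma b_1}\cdots c_\theta^{N_\gamma b_\theta}$ (reordering powers in the quantum affine space only introduces a nonzero scalar). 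As $\gamma\in\Phi_I^+$, some $b_{i_0}\geq1$ with $\alpha_{i_0}\in I$, whence $N_\gamma=N_{i_0}$ gives the exponent $N_\gamma b_{i_0}\geq N_{i_0}$; therefore $c_{i_0}^{N_\gamma b_{i_0}}=0$ in $s_\mrm{ord}=S_\mrm{ord}/(c_i^{N_i})$ and $x_\gamma^{N_\gamma}=0$. Together with the $q$-Serre relations this shows that every defining relation of $R$ vanishes in $s_\mrm{ord}$, yielding $R\to s_\mrm{ord}$.

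The main obstacle is not the $q$-Serre computation, which is a short finite check once the quantum-plane structure is in hand, but rather the two structural inputs that make the grading argument legitimate: that $S_\mrm{ord}$ really is a flat quantum affine space with one-dimensional $\mbb{Z}^\theta$-graded pieces, and that the root vectors $x_\gamma$ are $\mbb{Z}^\theta$-homogeneous of content equal to the coefficient vector of $\gamma$. Both are standard for Nichols algebras of finite Cartan type, and they are precisely what allow me to replace the intricate recursive definition of the $x_\gamma$ by the transparent monomial bookkeeping above.
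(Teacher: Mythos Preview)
Your argument is correct and follows essentially the same route as the paper: both verify the $q$-Serre relations by the direct quantum-plane computation (the paper writes $\mrm{ad}_\mrm{sk}(c_j)(c_j^mc_i)=(1-q_{jj}^{m+a_{ji}})c_j^{m+1}c_i$ for $i<j$, which is your formula with the roles of $i,j$ swapped), and both reduce the nilpotence relations $x_\gamma^{N_\gamma}$ to the vanishing of the $c_i^{N_i}$ via $\mbb{Z}^\theta$-homogeneity. One point the paper addresses that you do not: in non--simply-laced types the pre-Nichols algebra carries additional ``exceptional'' relations beyond the $q$-Serre relations (the equations from \cite{andruskiewitschschneider02} cited in the proof), and the paper checks separately that these too follow from $\mrm{ad}_\mrm{sk}(c_i)(c_j)=0$ for $i<j$. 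Since you work from the paper's stated definition $R^\mrm{pre}=TV/(q\text{-Serre})$ and from Theorem~\ref{thm:present}, this is not a gap in your argument as written, but it is a subtlety worth being aware of.
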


\begin{proof}
In $S_\mrm{ord}$ we have $\mrm{ad}_\mrm{sk}(c_j)(c_j^mc_i)=(1-q_{jj}^{m+a_{ji}})c_j^{m+1}c_i$ for $i<j$, which implies by induction
\[
\mrm{ad}_\mrm{sk}(c_j)^{1-{a_{ji}}}(c_i)=c_j^{1-a_{ji}}c_i\prod_{m=0}^{-a_{ji}}(1-q_{jj}^{m+a_{ji}})=0.
\]
When $R$ has no exceptional relations the above relation is sufficient to produce the proposed surjection $R^\mrm{pre}\to S_\mrm{ord}$.  In the case of exceptional relations, one checks directly from the presentations of~\cite[Eq. 4.6, 4.13, 4.22, 4.27, 4.34, 4.41, 4.49]{andruskiewitschschneider02} that the relations $\mrm{ad}_\mrm{sk}(c_i)(c_j)$, for $i<j$, imply all additional relations for $R^{\mrm{pre}}$ as well.  If we consider the projection $S_\mrm{ord}\to s_\mrm{ord}$, the addition of the relations $c_i^{N_i}$ to $S_\mrm{ord}$ imply the relations $c_\gamma^{N_{\gamma}}$.  So we also get the projection $R\to s_\mrm{ord}$.
\end{proof}

\subsection{Proof of Theorem~\ref{thm:cart_type}}
\label{sect:proofcartanQ}

\begin{proof}[Proof of Theorem~\ref{thm:cart_type}]
Take $S=S_\mrm{ord}$.  We have the adjoint action of $R^\mrm{pre}$ on itself, which induces an action of $R^\mrm{pre}$ on the braided symmetric algebra $S$.  Since the action of $R^\mrm{pre}$ on itself factors through $R$, the induced action on $S$ also factors to give a well-defined action of $R$ on $S$.  The generators $x_i$ in this case act as the adjoint operators $\mrm{ad}_\mrm{sk}(c_i)$.  We integrate the natural action of $G$ as well to get a well-defined action of $H=R\rtimes G$, which gives $S$ a well-defined $H$-module algebra structure (see Lemma~\ref{lem:YD_act}).
\par

We note that the restriction of the action $H\to \End_k(S)$ produces an embedding $V\to \End_k(S)$, where $V=R_1$ is the space of primitives in $R$.  To see this clearly, note that for any linear combination $v=\sum_i \kappa_i x_i$, and $i_{v}$ maximal in the ordered basis $P_\mrm{ord}$ such that $\kappa_{i_v}\neq 0$, we have
\[
v\cdot c_{i_v}=\kappa_{i_v}\mrm{ad}_\mrm{sk}(c_{i_v})(c_{i_v})=(1-q_{i_vi_v})\kappa_{i_v}c_{i_v}^2\neq 0.
\]
The action of $H$ will however not be inner faithful in general, as $G$ may not act faithfully on $S$.
\par

We have the additional action of $H$ on $\mbb{C}[w_\mu:\mu\in Y]$ given simply by the Hopf projection $H\to \mbb{C}[G]$ and the prescribed $G$-action on $\mbb{C}[w_\mu:\mu\in Y]$, $g\cdot w_\mu=\mu(g)w_\mu$.  We can therefore let $H$ act diagonally on the tensor product
\[
\mbb{C}[w_\mu:\mu\in Y]\ot S.
\]
Via the vector space equality 
\[
\mbb{C}[w_\mu:\mu\in Y]\ot S=\mbb{C}[w_\mu:\mu\in Y]\underline{\ot} S=A
\]
we get an $H$-action on $A$, which we claim gives it the structure of an $H$-module algebra.  To show this it suffices to show that the multiplication is $G$-linear and $R$-linear independently.
\par

The fact that the multiplication on $A$ is a map of $G$-representations follows from the fact that $A$ is an algebra object in $\YD(G)$.  For $R$-linearity it suffices to show that the braiding $\mfk{c}:S\ot \mbb{C}[w_\mu:\mu\in Y]\to\mbb{C}[w_\mu:\mu\in Y]\ot S$ is a map of $R$-modules, since $S$ and $\mbb{C}[w_\mu:\mu\in Y]$ are both $R$-module algebras independently.  However, this is clear as $\mbb{C}[w_\mu:\mu\in Y]$ is a trivial $R$-module.  Whence we find that $A$ is an $H$-module algebra, as proposed.  We then get an induced action of $H$ on the fraction field $Q=\Frac(A)$ by Theorem~\ref{thm:indQ}.
\par

The fact that the $H$-action on $Q$ is inner faithful when $Y$ generates $G^\vee$ follows by Corollary~\ref{cor:fate}, since the restrictions $G\to \End_k(A)$ and $V\to \End_k(A)$ are both injective.
\end{proof}

\section{Actions for (generalized) quantum groups}

We consider cocycle deformations of the Cartan type algebras considered in the previous section.  The primary example of such an algebra is the small quantum group $u_q(\mfk{g})$ associated to a simple Lie algebra and root of unity $q$.  However, more generally, one has the pointed Hopf algebras $u(\mcl{D})$ of Andruskiewitsch and Schneider.  These algebras are determined by a combinatorial data $\mcl{D}$ consisting of a collection of Dynkin diagrams and a so-called linking data for these diagrams.
\par

We produce actions of the Hopf algebras $u(\mcl{D})$ on central simple division algebras which are constructed from their Angiono-de Concini-Kac form $U(\mcl{D})$.  This action is inner faithful if and only if the skew primitives in $U(\mcl{D})$, considered as a representation of the grouplikes under the adjoint action, tensor generate $\operatorname{Rep}(G(u(\mcl{D})))$.  In the case of a classical quantum group $u_q(\mfk{g})$ we construct a faithful action on a central simple algebra via quantum function algebras, without imposing restrictions on the interactions of grouplikes and skew primitives.

\subsection{Actions for $u(\mcl{D})$}

Let $R=B(V)$ be of finite Cartan type.  Take $V$ in $\YD(G)$ for some abelian $G$ and consider the bosonization $H=R\rtimes G$.  Take a basis $\{x_1,\dots, x_\theta\}$ for $V$ consisting of $G\times G^\vee$-homogeneous elements.  Let $g_i$ be the $G$-degree of $x_i$.
\par

We can consider $V$ as object in $\YD(\mbb{Z}^\theta)$ and take
\[
H^\mrm{pre}:=R^\mrm{pre}\rtimes \mbb{Z}^\theta.
\]
Specifically, $\mbb{Z}^\theta$ has generators $t_i$, we have the group map $\mbb{Z}^\theta\to G$, $t_i\mapsto g_i$, and we let $\mbb{Z}^\theta$ act on $V$ via this group map.  We take each $x_i\in V$ to be homogeneous of $\mbb{Z}^\theta$-degree $t_i$.

\begin{lemma}\label{lem:513}
For $R=B(V)$, and $V$ of Cartan type as above, the algebra $H^\mrm{pre}$ is a domain which is finite over its center.
\end{lemma}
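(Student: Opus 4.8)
The plan is to prove the two assertions separately: that $H^\mrm{pre}$ is a domain, and that it is finite as a module over its center. Both rest on the PBW-type (Lusztig/de Concini--Kac) basis for $R^\mrm{pre}$ in the root vectors $\{x_\gamma\}_{\gamma\in\Phi^+}$, which I would take from the theory of distinguished pre-Nichols algebras, so that $H^\mrm{pre}=R^\mrm{pre}\rtimes\mbb{Z}^\theta$ has a vector space basis of monomials $\mathbf{t}^{\mathbf{n}}\prod_{\gamma}x_\gamma^{a_\gamma}$ with $\mathbf{n}\in\mbb{Z}^\theta$ and $a_\gamma\geq 0$ (the powers unbounded, since the nilpotence relations are absent in $R^\mrm{pre}$), and where $\mbb{Z}^\theta$ acts on $R^\mrm{pre}$ by algebra automorphisms through the map $\mbb{Z}^\theta\to G$ and the $\YD(G)$-structure.

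For the domain property, I would first observe that $R^\mrm{pre}$ is a domain. Ordering the positive roots convexly, the Levendorskii--Soibelman straightening relations express $R^\mrm{pre}$ as an iterated Ore extension $\mbb{C}[x_{\gamma_1}][x_{\gamma_2};\sigma_2,\delta_2]\cdots[x_{\gamma_N};\sigma_N,\delta_N]$, with each $\sigma_k$ the diagonal automorphism coming from the $G$-grading and each $\delta_k$ a $\sigma_k$-derivation landing in the span of lower monomials; since Ore extensions of a domain along automorphisms are again domains, $R^\mrm{pre}$ is a domain. (Equivalently, a suitable filtration has associated graded a quantum affine space.) The bosonization $H^\mrm{pre}=R^\mrm{pre}\rtimes\mbb{Z}^\theta$ is then the skew group ring of the domain $R^\mrm{pre}$ by the torsion-free abelian group $\mbb{Z}^\theta$ acting by automorphisms; since $\mbb{Z}^\theta$ admits a group ordering, a leading-term argument on supports shows this skew group ring is again a domain.

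For finiteness over the center, I would exhibit an explicit central subalgebra. Choose $M$ divisible by all the orders $\mrm{ord}(\chi_j(g_i))$, so that each $t_i^{\pm M}$ commutes with every $x_j$ and is therefore central in $H^\mrm{pre}$. For $\gamma\in\Phi^+$ let $\mathbf{t}_\gamma\in\mbb{Z}^\theta$ lift the $G$-degree $g_\gamma$ of $x_\gamma$, and set $\tilde{z}_\gamma:=x_\gamma^{N_\gamma}\mathbf{t}_\gamma^{-N_\gamma}$. Since $x_\gamma^{N_\gamma}\in Z_0\subset Z_{tot}(R^\mrm{pre})$, Proposition~\ref{prop:Z0centr}(ii) gives $x_\gamma^{N_\gamma}a=\chi_{a}(g_\gamma)^{N_\gamma}\,a\,x_\gamma^{N_\gamma}$ for homogeneous $a\in R^\mrm{pre}$, while $\mathbf{t}_\gamma^{-N_\gamma}a=\chi_a(g_\gamma)^{-N_\gamma}\,a\,\mathbf{t}_\gamma^{-N_\gamma}$; multiplying, the two scalars cancel and $\tilde z_\gamma$ commutes with all of $R^\mrm{pre}$. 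It then remains to check that $\tilde z_\gamma$ commutes with the torus, i.e. that $\chi_\gamma(g_j)^{N_\gamma}=1$ for all $j$; this is precisely where the standing hypotheses that $\mrm{ord}(q_{ii})$ be odd (and coprime to $3$ in type $G_2$) are used, in tandem with Proposition~\ref{prop:Z0centr}(i). Let $Z$ be the subalgebra generated by the central elements $\tilde z_\gamma$ together with the $t_i^{\pm M}$; being central it is commutative.

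Finally I would deduce module-finiteness from the PBW basis. The relation $x_\gamma^{N_\gamma}=\tilde z_\gamma\,\mathbf{t}_\gamma^{N_\gamma}$ lets one trade any power $x_\gamma^{a_\gamma}$ with $a_\gamma\geq N_\gamma$ for a central factor times a strictly smaller monomial, while $\mbb{C}[\mbb{Z}^\theta]$ is free of rank $M^\theta$ over $\mbb{C}[t_i^{\pm M}]$. Hence the finite set $\{\mathbf{t}^{\mathbf n}\prod_\gamma x_\gamma^{a_\gamma}:0\leq n_i<M,\ 0\leq a_\gamma<N_\gamma\}$ spans $H^\mrm{pre}$ over $Z$, and as $Z$ is central this shows $H^\mrm{pre}$ is finite over its center. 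The main obstacle throughout is the structural input from the theory of pre-Nichols algebras---securing the root-vector PBW basis and the triangular straightening relations---together with the verification that $\tilde z_\gamma$ is genuinely central (the character identity $\chi_\gamma^{N_\gamma}(g_j)=1$); the formal module-finiteness argument is routine once these are in hand.
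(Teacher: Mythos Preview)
Your domain argument is essentially the paper's: filter $R^\mrm{pre}$ via a convex ordering on $\Phi^+$ so that the associated graded is a skew polynomial ring (equivalently, realize it as an iterated Ore extension), and then pass to $H^\mrm{pre}$ using the $\mbb{Z}^\theta$-grading, which is the same mechanism as your leading-term argument from orderability of $\mbb{Z}^\theta$.

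The finiteness-over-center argument, however, has a genuine gap. Your element $\tilde z_\gamma=x_\gamma^{N_\gamma}\mathbf{t}_\gamma^{-N_\gamma}$ does commute with $R^\mrm{pre}$ via Proposition~\ref{prop:Z0centr}(ii), but the claim that it commutes with the torus---equivalently that $\chi_\gamma(g_j)^{N_\gamma}=1$ for all $j$---need not hold, and neither the odd-order hypothesis nor Proposition~\ref{prop:Z0centr}(i) secures it. The involution property yields only $\chi_j(g_\gamma)^{N_\gamma}\chi_\gamma(g_j)^{N_\gamma}=1$, not the triviality of either factor; indeed the paper remarks immediately after Proposition~\ref{prop:Z0centr} that the $x_\gamma^{N_\gamma}$ are \emph{not} central in $R^\mrm{pre}$ in general, which by that same product relation is equivalent to the failure of your identity. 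A concrete obstruction already in type $A_1\times A_1$: take $q_{11}$ of odd order $3$, $q_{22}$ of odd order $5$, and $q_{12}=q_{21}^{-1}$ a primitive $7$th root of unity (realizable over $G=(\mbb{Z}/105\mbb{Z})^2$). Then $\chi_{\alpha_1}(g_2)^{N_1}=q_{21}^{3}\neq 1$, so $\tilde z_{\alpha_1}$ fails to commute with $t_2$.

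The paper sidesteps this by using coarser powers: since $N_\gamma\mid\exp(G)$ and every character of $G$ has order dividing $\exp(G)$, the elements $x_\gamma^{\exp(G)}$ are genuinely central in $H^\mrm{pre}$, and together with $\mbb{C}[K]$ for $K=\ker(\mbb{Z}^\theta\to G)$ they generate a central subalgebra over which $H^\mrm{pre}$ is finite. Your argument is easily salvaged along the same lines---replace $N_\gamma$ by $\exp(G)$ (or by your own $M$) in the definition of $\tilde z_\gamma$---but as written the centrality step does not go through.
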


\begin{proof}
Recall that $R^\mrm{pre}$ is finite over the subalgebra $Z_0$, which is generated by the $x_\gamma^{N_\gamma}$ and lies in the total braided center by Proposition~\ref{prop:Z0centr}.  Hence $R^\mrm{pre}$ is finite over the central subalgebra $Z'_0$ generated by the powers $x_\alpha^{\mrm{exp}(G)}$.  If we take $K$ to be the kernel of the projection $K\to \mbb{Z}^\theta\to G$, it follows that $H^\mrm{pre}$ is finite over $Z_0'\ot \mbb{C}[K]$.
\par

We show that $H^\mrm{pre}$ is a domain.  We first show that $R^\mrm{pre}$ is a domain.  Just as in~\cite[\S 1.7, Proposition 1.7]{deconcinikac90} (cf.~\cite[Lemma 2.4]{mpsw10}), one can filter $R^\mrm{pre}$ via a normal ordering on the positive roots for the root system associated to $V$ to get that $\mrm{gr}R^\mrm{pre}$ is a skew polynomial ring generated by the $x_\alpha$.  In particular, $\mrm{gr}R^\mrm{pre}$ is a domain, and hence $R^\mrm{pre}$ is a domain.  By considering the $\mbb{Z}^\theta$-grading on $H^\mrm{pre}$ given directly by the $\mbb{Z}^\theta$ factor, we see that $H^\mrm{pre}$ is a domain as well.
\end{proof}

We note that any Hopf $2$-cocycle $\sigma:H\ot H\to \mbb{C}$ restricts to a Hopf $2$-cocycle on $H^\mrm{pre}$, via the projection $H^\mrm{pre}\to H$.  Hence we can consider for any such $\sigma$ the twist $H^\mrm{pre}_\sigma$ and Hopf projection $H^\mrm{pre}_\sigma\to H_\sigma$.

\begin{lemma}\label{lem:526}
Consider any $2$-cocycle $\sigma:H\ot H\to \mbb{C}$ with trivial restriction $\sigma|_{G\times G}=1$.  Then the following holds:
\begin{enumerate}
\item[(i)] The cocycle deformation $H^\mrm{pre}_\sigma$ is (still) a domain.
\item[(ii)] $H^\mrm{pre}_\sigma$ is finite over its center.
\item[(iii)] The adjoint action of $H^\mrm{pre}_\sigma$ on itself factors through $H_\sigma$.
\end{enumerate}
\end{lemma}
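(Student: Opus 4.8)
The plan is to reduce all three statements to a single observation about the pulled-back cocycle. Write $\pi:H^\mrm{pre}\to H$ for the projection, and recall that $\sigma$ acts on $H^\mrm{pre}$ through $\pi$, i.e. $\sigma(a,b)=\sigma(\pi a,\pi b)$. The subalgebra $Z_0\subset R^\mrm{pre}$ maps into the scalars $\mbb{C}1\subset H$, since its generators $x_\gamma^{N_\gamma}$ die in $R$, and the group algebra $\mbb{C}[K]$, with $K=\ker(\mbb{Z}^\theta\to G)$, maps to $1$ in $G\subset H$. Hence $\sigma(X,b)=\epsilon(X)\epsilon(b)$ and $\sigma(b,X)=\epsilon(b)\epsilon(X)$ whenever $X\in Z_0$ or $X\in\mbb{C}[K]$; together with the hypothesis $\sigma|_{G\times G}=1$, which forces $\sigma|_{\mbb{C}[\mbb{Z}^\theta]\ot\mbb{C}[\mbb{Z}^\theta]}=\epsilon\ot\epsilon$, this is essentially the only input needed. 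Using the deformed product $a\cdot_\sigma b=\sigma(a_1,b_1)a_2b_2\sigma^{-1}(a_3,b_3)$ and the fact that $Z_0$ and $\mbb{C}[K]$ are Hopf subalgebras, the first consequence is that multiplication by any element of $Z_0$ or of $\mbb{C}[K]$ is undeformed: $X\cdot_\sigma b=Xb$ and $b\cdot_\sigma X=bX$.

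Granting this, (ii) and (iii) are immediate. For (ii), set $Z=Z_0'\ot\mbb{C}[K]$, the central subalgebra of $H^\mrm{pre}$ over which $H^\mrm{pre}$ is module-finite by Lemma~\ref{lem:513}. Since $Z_0'\subseteq Z_0$, left and right multiplication by $Z$ are undeformed, so $Z$ remains central in $H^\mrm{pre}_\sigma$ and the left $Z$-module underlying $H^\mrm{pre}_\sigma$ coincides with that of $H^\mrm{pre}$; hence $H^\mrm{pre}_\sigma$ is finite over $Z$, and a fortiori over its center. For (iii), note that $\ker\pi$ is generated as a two-sided ideal by $Z_0^+$ and $\mbb{C}[K]^+$. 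As $Z_0$ is a Hopf subalgebra on which the deformation is trivial, $S_\sigma$ restricts to $S$ there, and for $X\in Z_0$ the twisted adjoint action $\mrm{ad}_\sigma(X)=X_1\cdot_\sigma(-)\cdot_\sigma S_\sigma(X_2)$ agrees with the ordinary one $\mrm{ad}(X)$, which is trivial on $H^\mrm{pre}$ by the bosonized form of Lemma~\ref{prop:881}; likewise $\mbb{C}[K]$ is central and acts trivially. Thus the generators of $\ker\pi$ lie in the kernel of the algebra map $\mrm{ad}_\sigma:H^\mrm{pre}_\sigma\to\End(H^\mrm{pre}_\sigma)$, whose kernel is a two-sided ideal and therefore contains $\ker\pi$; so $\mrm{ad}_\sigma$ descends along $H^\mrm{pre}_\sigma\to H_\sigma$.

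For (i) I would argue by a filtration, reusing the de Concini--Kac filtration $F^{DK}_\bullet$ of Lemma~\ref{lem:513}, for which $\mrm{gr}^{DK}H^\mrm{pre}$ is a quantum affine space (in particular a domain) and $F^{DK}_0=\mbb{C}[\mbb{Z}^\theta]$. The comultiplication is filtered, $\Delta(F^{DK}_p)\subseteq\sum_{i+j\le p}F^{DK}_i\ot F^{DK}_j$, so $F^{DK}_\bullet$ is an algebra filtration for $\cdot_\sigma$ as well. The key claim is that the leading symbol of $\cdot_\sigma$ is undeformed: for homogeneous $a,b$ of degrees $p,q$, the only terms of $a\cdot_\sigma b$ of top degree $p+q$ are those in which $a_1,a_3,b_1,b_3$ all have degree $0$, hence lie in $\mbb{C}[\mbb{Z}^\theta]$, on which $\sigma$ and $\sigma^{-1}$ reduce to $\epsilon\ot\epsilon$; any term using a cocycle value on a higher-degree tensorand is strictly lower in $F^{DK}_\bullet$. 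Hence $\mrm{gr}^{DK}(H^\mrm{pre}_\sigma)\cong\mrm{gr}^{DK}(H^\mrm{pre})$ is the same quantum affine space, a domain, and therefore $H^\mrm{pre}_\sigma$ is a domain.

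The main obstacle is the leading-symbol computation in (i): one must verify that the de Concini--Kac filtration is multiplicative for the twisted product and that the degree-$0$ part of the relevant triple coproducts is exactly the grouplike algebra $\mbb{C}[\mbb{Z}^\theta]$, since it is precisely here that $\sigma|_{G\times G}=1$ is invoked to kill the would-be top-degree deformation. The remaining verifications---that $Z_0$ and $\mbb{C}[K]$ map into the scalars of $H$, and that products with them are undeformed---are routine given Proposition~\ref{prop:Z0centr} and the Hopf-subalgebra structure.
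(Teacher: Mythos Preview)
Your arguments for (ii) and (iii) are essentially the paper's: both identify the Hopf subalgebra $\msc{Z}=Z_0\rtimes K$ (your $Z_0$ and $\mbb{C}[K]$) in the kernel of $H^\mrm{pre}\to H$, observe that $\sigma$ is therefore trivial whenever one argument lies in $\msc{Z}$, and conclude that left and right multiplication by $\msc{Z}$ are undeformed.  From this (ii) and (iii) follow exactly as you say.

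For (i) there is a genuine gap.  The de~Concini--Kac (PBW) filtration of Lemma~\ref{lem:513} is an \emph{algebra} filtration, but it is not a coalgebra filtration, so your claim $\Delta(F^{DK}_p)\subseteq\sum_{i+j\le p}F^{DK}_i\ot F^{DK}_j$ fails.  Already in type $A_2$ with the convex order $\alpha_1<\alpha_1+\alpha_2<\alpha_2$ one has, in $R^\mrm{pre}$,
\[
\Delta_R(x_{12})=x_{12}\ot 1+1\ot x_{12}+(1-q_{12}q_{21})\,x_1\ot x_2,
\]
and in the PBW filtration $\deg(x_1)+\deg(x_2)=(1,0,0)+(0,0,1)$ strictly exceeds $\deg(x_{12})=(0,1,0)$ in the lexicographic order.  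So the cross term lies above $F^{DK}_{\deg x_{12}}$, and your leading-symbol computation breaks down at its first step.

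The paper sidesteps this entirely by using the \emph{coradical} filtration on $H^\mrm{pre}_\sigma$.  Since a cocycle twist leaves the coalgebra structure untouched, the coradical filtration of $H^\mrm{pre}_\sigma$ is the same as that of $H^\mrm{pre}$, which is just the grading by polynomial degree in the $x_i$ (with $\mbb{C}[\mbb{Z}^\theta]$ in degree $0$).  That grading \emph{is} a coalgebra grading, so your leading-symbol argument now goes through verbatim: for $a,b$ of coradical degrees $p,q$, the top-degree contribution to $a\cdot_\sigma b$ forces $a_1,a_3,b_1,b_3$ into $\mbb{C}[\mbb{Z}^\theta]$, where $\sigma$ is trivial by hypothesis, giving $\mrm{gr}\,H^\mrm{pre}_\sigma\cong H^\mrm{pre}$.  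In other words, you had the right mechanism but applied it to the wrong filtration; swapping in the coradical filtration repairs the proof and recovers exactly the paper's argument.
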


\begin{proof}
(i) By considering the associated graded algebra $\mrm{gr} H^\mrm{pre}_\sigma$ with respect to the coradical filtration, and Lemma~\ref{lem:513}, we see that $H^\mrm{pre}_\sigma$ is a domain.  In particular, $\mrm{gr} H^\mrm{pre}_\sigma=H^\mrm{pre}$, which is a domain by Lemma~\ref{lem:513}.
\par

(ii) Let $\Pi$ be the kernel of the projection $\mbb{Z}^\theta\to G$, and take $\msc{Z}=Z_0\rtimes \Pi$.  Then we have an exact sequence of Hopf algebras $\msc{Z}\to H^\mrm{pre}\to H$.  Therefore
\[
\sigma|_{\msc{Z}\ot H^\mrm{pre}}=\sigma|_{H^\mrm{pre}\ot \msc{Z}}=\epsilon
\]
and $H^\mrm{pre}_\sigma=H^\mrm{pre}$ as a $\msc{Z}$-bimodule.  In particular $H^\mrm{pre}_\sigma$ is a finite module over $\msc{Z}$.  Since $\msc{Z}$ is finite over the central subalgebra generated by the kernel $\Pi$ of the projection $\mbb{Z}^\theta\to G$ and the $\exp(G)$-th powers of the generators for $R^\mrm{pre}$, we see that $H^\mrm{pre}_\sigma$ is finite over its center.
\par

(iii) We note that the subalgebra $\msc{Z}=Z_0\rtimes \Pi$ in $H^\mrm{pre}_\sigma$ is a Hopf subalgebra.  Since $H^\mrm{pre}_\sigma=H^\mrm{pre}$ as a $\msc{Z}$-bimodule, it follows that the adjoint action of $\msc{Z}$ on $H^\mrm{pre}_\sigma$ is still trivial, by Proposition~\ref{prop:881}.  Whence the adjoint action of $H^\mrm{pre}_\sigma$ on $H^\mrm{pre}_\sigma$ restricts trivially to $\msc{Z}$, and from the exact sequence $\msc{Z}\to H^\mrm{pre}_\sigma\to H_\sigma$ we see that the adjoint action factors through $H_\sigma$.
\end{proof}

\begin{theorem}\label{thm:cocyc_tw}
Suppose that $V\in \YD(G)$ is of finite Cartan type, and that $V$ (tensor) generates $\operatorname{Rep}(G)$.  Then for any $2$-cocycle $\sigma$ of $H=B(V)\rtimes G$ with $\sigma|_{G\times G}=1$, the adjoint action of $H_\sigma$ on $H^\mrm{pre}_\sigma$ is inner faithful.  Consequently, the induced action of $H_\sigma$ on the central simple division algebra $\Frac(H^\mrm{pre}_\sigma)$ is inner faithful.
\end{theorem}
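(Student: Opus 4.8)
The plan is to deduce inner faithfulness from Corollary~\ref{cor:fate}, applied to the $H_\sigma$-action on $A:=H^{\mrm{pre}}_\sigma$ furnished by Lemma~\ref{lem:526}(iii). Since a Hopf $2$-cocycle twist alters only the multiplication, $H_\sigma$ shares the coalgebra of $H$; hence it is again finite-dimensional and pointed with $G(H_\sigma)=G$ abelian, and $\Prim(H_\sigma)=\Prim(H)$ as subspaces of the common coalgebra. By the Taft--Wilson discussion preceding Lemma~\ref{lem:fate}, the quotient $\Prim(H_\sigma)'\to \Prim(\mrm{gr}\,H_\sigma)_1=V$ is an isomorphism, so every nonzero $y\in\Prim(H_\sigma)'$ has the form $y=v+z$ with $0\neq v\in V$ and $z\in\mbb{C}[G]$. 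Thus Corollary~\ref{cor:fate} reduces the theorem to showing that (a) $G$ acts faithfully on $A$, and (b) $\mrm{ad}_\sigma(y)\neq 0$ on $A$ for each such $y$. Both will be checked on the associated graded, using the identification $\mrm{gr}\,A=H^{\mrm{pre}}$ of Lemma~\ref{lem:526}(i).

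For (a) I would note that because $\sigma|_{G\times G}=1$ the twist is trivial on the coradical, so conjugation by a grouplike $g$ has associated graded operator equal to the untwisted conjugation on $H^{\mrm{pre}}$, which scales each $x_i$ by $\chi_i(g)$. The hypothesis that $V$ tensor generates $\operatorname{Rep}(G)$ forces the characters $\{\chi_i\}$ to generate $G^\vee$; hence whenever $g\neq 1$ some $\chi_i(g)\neq 1$, and $g$ already acts nontrivially on $\mrm{gr}\,A$, so a fortiori on $A$.

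For (b), the point is that the operators $\mrm{ad}_\sigma$ behave well with respect to the coradical filtration. Since $z\in\mbb{C}[G]$ acts by (twisted) conjugation, $\mrm{ad}_\sigma(z)$ preserves filtration degree, whereas $v\in V\subset F_1$ makes $\mrm{ad}_\sigma(v)$ raise filtration degree by one; consequently the degree-raising part of $\mrm{gr}(\mrm{ad}_\sigma(y))$ is exactly $\mrm{gr}(\mrm{ad}_\sigma(v))$, and the two contributions cannot cancel. Because the twist does not affect leading products, $\mrm{gr}(\mrm{ad}_\sigma(v))$ equals the untwisted skew-adjoint operator $\mrm{ad}_\mrm{sk}(v)$ on $H^{\mrm{pre}}$. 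It therefore suffices to see $\mrm{ad}_\mrm{sk}(v)\neq 0$ on $H^{\mrm{pre}}$ for $0\neq v=\sum_i\kappa_i x_i$. For this I invoke the projection $R^{\mrm{pre}}\to S_\mrm{ord}$: the $V$-action on $S_\mrm{ord}$ used in the proof of Theorem~\ref{thm:cart_type} is the skew-adjoint action of $R^{\mrm{pre}}$ on itself pushed through this projection, so for $i_v$ maximal with $\kappa_{i_v}\neq 0$ the image of $\mrm{ad}_\mrm{sk}(v)(x_{i_v})$ in $S_\mrm{ord}$ is $v\cdot c_{i_v}=(1-q_{i_v i_v})\kappa_{i_v}c_{i_v}^2\neq 0$. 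Hence $\mrm{ad}_\mrm{sk}(v)\neq 0$, whence $\mrm{gr}(\mrm{ad}_\sigma(y))\neq 0$ and $\mrm{ad}_\sigma(y)\neq 0$.

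With (a) and (b) in hand, Corollary~\ref{cor:fate} yields inner faithfulness of the $H_\sigma$-action on $H^{\mrm{pre}}_\sigma$. For the final clause, Lemma~\ref{lem:526}(i),(ii) shows $H^{\mrm{pre}}_\sigma$ is a domain finite over its center, so Theorem~\ref{thm:indQ} extends the action uniquely to $Q=\Frac(H^{\mrm{pre}}_\sigma)$; since $H^{\mrm{pre}}_\sigma$ embeds in $Q$ and inner faithfulness is governed only by which Hopf ideals act by zero, it is inherited by the action on $Q$. The step I expect to require the most care is (b): one must confirm that the cocycle twist leaves the leading-order behaviour of the adjoint operators undisturbed---precisely that $\mrm{gr}(\mrm{ad}_\sigma(v))=\mrm{ad}_\mrm{sk}(v)$---so that the $S_\mrm{ord}$-computation of Theorem~\ref{thm:cart_type} can be transported up through the filtration, while the harmless grouplike corrections $z$ are tracked via the operator-degree bookkeeping.
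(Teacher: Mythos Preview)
Your proposal is correct and follows essentially the same route as the paper: apply Lemma~\ref{lem:fate}/Corollary~\ref{cor:fate} to the adjoint action, verify faithfulness of $G$ via the fact that the $\chi_i$ generate $G^\vee$, and verify injectivity on skew primitives by passing to the associated graded with respect to the coradical filtration. The only notable difference is in the last step of (b): the paper, after reducing $v\cdot_{\mrm{ad}}a$ to $va$ modulo $F_iH_\sigma^{\mrm{pre}}$, concludes nonvanishing directly from the fact that $H^{\mrm{pre}}$ is a domain (Lemma~\ref{lem:513}), whereas you detour through the projection $R^{\mrm{pre}}\to S_{\mrm{ord}}$ and the explicit computation of Theorem~\ref{thm:cart_type}; the domain argument is shorter and avoids the ordering machinery, but your route is equally valid.
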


\begin{proof}
The fact that $V$ generates $\operatorname{Rep}(G)$ implies that all characters for $G$ appear in the decomposition of $H^\mrm{pre}$ into simples, under the adjoint action.  So $G$ acts faithfully on $H^\mrm{pre}$.  Triviality of the restriction $\sigma|_{G\times G}$ implies that the grading $\mrm{gr}H_\sigma^\mrm{pre}$ with respect to the coradical filtration is the bosonization $H^\mrm{pre}$.  Semisimplicity of $\mbb{C}[G]$ then implies an isomorphism of $G$-representations $H_\sigma^\mrm{pre}\cong H^\mrm{pre}$.  So we see that $G$ acts faithfully on $H_\sigma^\mrm{pre}$.
\par

All that is left is to verify that the restriction of the adjoint action $H_\sigma\to \End_\mbb{C}(H_\sigma^\mrm{pre})$ to the space of nontrivial $(g,1)$-skew primitives $\Prim_g(H_\sigma^\mrm{pre})'$ is injective.  Note that $H_\sigma^\mrm{pre}$ is a $G$-graded {\it vector space} (not algebra) with grading induced by comultiplication and projection $H_\sigma^\mrm{pre}\to H_\sigma^\mrm{pre}\ot H_\sigma^\mrm{pre}\to H_\sigma^\mrm{pre}\ot\mbb{C}[G]$.  Choose any such primitive $v$ and $a$ of trivial $G$-degree, i.e.\ $a\in B(V)\subset H_\sigma^\mrm{pre}$.  Note that $v\in V$, and hence $v$ has a canonical lift to $H_\sigma$.  We have
\[
v\cdot_{ad}a=\sigma(v,a_1)a_2+\sigma(g,a_1)va_2+\sigma(g,a_1)\tilde{g}a_2\sigma^{-1}(v,a_3)+\text{elements in degree }G-\{e\}.
\]
So we see that it suffices to show that the $e$-degree term is nonvanishing.
\par

Take $i$ minimal with $a\in F_iH_\sigma$, where we filter with respect to the coradical filtration.  Then, since $\mrm{gr} H_\sigma=H$,
\[
\sigma(v,a_1)a_2+\sigma(g,a_1)va_2+\sigma(g,a_1)a_2\sigma^{-1}(v,a_3)=va\mod F_i H_\sigma.
\]
Since $H$ is a domain, $va$ is nonzero, and we conclude $v\cdot_{ad}a$ is nonzero.  It follows that the restriction of the adjoint action to each $\Prim_g(H_\sigma)'$ is injective, and the adjoint action of $H_\sigma$ on $H_\sigma^\mrm{pre}$ is inner faithful by Lemma~\ref{lem:fate}.
\end{proof}

We are particularly interested in the generalized quantum groups $u(\mcl{D})=u(\mcl{D},\lambda,\mu)$ of Andruskiewitsch and Schneider~\cite{andruskiewitschschneider10}.  These algebras are determined by a collection of Dynkin diagrams and a ``linking data" $\mcl{D}=(\mcl{D},\lambda,\mu)$ between the Dynkin diagrams.  As far as the above presentation is concerned, we have
\[
u(\mcl{D})=(B(V)\rtimes G)_\sigma=H_\sigma
\]
for a finite Cartan type $V$ and a cocycle $\sigma$ which restricts trivially to the grouplikes~\cite[Section 5.2, 5.3]{andruskiewitschschneider10},~\cite[Corollary 1.2]{angionoiglesias16}.  A direct application of Theorem~\ref{thm:cocyc_tw} yields

\begin{corollary}\label{cor:uD}
Suppose $V\in \YD(G)$ is of finite Cartan type, and that $V$ generates $\mrm{Rep}(G)$.  Then the generalized quantum group $u(\mcl{D})$ associated to any linking data $\mcl{D}$ admits an inner faithful action on a central simple division algebra.
\end{corollary}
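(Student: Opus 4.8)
The plan is to reduce the corollary directly to Theorem~\ref{thm:cocyc_tw}, so that the work is almost entirely bookkeeping: matching the hypotheses. First I would recall the structural description of $u(\mcl{D})$ coming from~\cite{andruskiewitschschneider10,angionoiglesias16}, namely that $u(\mcl{D})=H_\sigma$ where $H=B(V)\rtimes G$ for the finite Cartan type braided vector space $V\in\YD(G)$ underlying the linking data, and $\sigma$ is a Hopf $2$-cocycle whose restriction to the grouplikes is trivial, $\sigma|_{G\times G}=1$. This is precisely the input required by Theorem~\ref{thm:cocyc_tw}.

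With this presentation in hand, the verification of hypotheses is immediate: $V$ generates $\mrm{Rep}(G)$ by assumption, $V$ is of finite Cartan type by construction, and $\sigma|_{G\times G}=1$ as just noted. I would therefore apply Theorem~\ref{thm:cocyc_tw} verbatim to conclude that the adjoint action of $H_\sigma=u(\mcl{D})$ on $H^\mrm{pre}_\sigma$ is inner faithful, and hence that the induced action of $u(\mcl{D})$ on $\Frac(H^\mrm{pre}_\sigma)$ is inner faithful.

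Finally I would confirm that $\Frac(H^\mrm{pre}_\sigma)$ is genuinely a central simple division algebra, so that the conclusion concerns the correct class of objects. By Lemma~\ref{lem:526}(i) the twist $H^\mrm{pre}_\sigma$ is a domain, and by Lemma~\ref{lem:526}(ii) it is finite over its center; hence the fraction division algebra $\Frac(H^\mrm{pre}_\sigma)=\Frac(Z(H^\mrm{pre}_\sigma))\ot_{Z(H^\mrm{pre}_\sigma)}H^\mrm{pre}_\sigma$ exists and is central simple, with the $u(\mcl{D})$-action extending to it by Theorem~\ref{thm:indQ}. The only genuine subtlety -- the single place where one must be careful rather than formal -- is the first step: one must invoke the Andruskiewitsch--Schneider and Angiono--Iglesias structure theory to guarantee that \emph{every} $u(\mcl{D})$ really does arise as such a twist with $V$ of finite Cartan type and $\sigma$ trivial on $G\times G$. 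Once that presentation is secured, no further argument is needed, which is why the author can describe this as a direct application of Theorem~\ref{thm:cocyc_tw}.
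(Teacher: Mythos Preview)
Your proposal is correct and matches the paper's approach exactly: the paper presents this corollary as a direct application of Theorem~\ref{thm:cocyc_tw}, invoking the identification $u(\mcl{D})=(B(V)\rtimes G)_\sigma$ with $\sigma|_{G\times G}=1$ from \cite[\S5.2--5.3]{andruskiewitschschneider10} and \cite[Corollary~1.2]{angionoiglesias16}. Your added remarks confirming that $\Frac(H^\mrm{pre}_\sigma)$ is a central simple division algebra via Lemma~\ref{lem:526} are accurate but already absorbed into the statement of Theorem~\ref{thm:cocyc_tw}.
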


\begin{remark}
The supposition that $V$ generates $\mrm{Rep}(G)$ is a serious restriction.  For classical quantum groups $u_q(\mfk{g})$, for example, the space of skew primitives generates $\mrm{Rep}(G)$ if and only if $q$ is relatively prime to the determinant of the Cartan matrix for $\mfk{g}$.  For generalized Taft algebras $T(n,m,\alpha)$, we have such generation if and only if $m=n$.
\end{remark}

\subsection{More refined actions for standard quantum groups}

Let $q$ be an odd root of $1$, $\mfk{g}$ be a simple Lie algebra, and $u_q(\mfk{g})$ be the corresponding small quantum group.  We assume additionally that the order of $q$ is coprime to $3$ when $\mfk{g}$ is of type $G_2$.

\begin{proposition}\label{prop:uqg}
There is an inner faithful action of $u_q(\mfk{g})$ on $\Frac(\O_q(\mbb{G}))$, where $\mbb{G}$ is the simply-connected, semisimple, algebraic group with Lie algebra $\mfk{g}$.  Furthermore, this action is Hopf-Galois.  In particular, $u_q(\mfk{g})$ acts inner faithfully on a central simple division algebra.
\end{proposition}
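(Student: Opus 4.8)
The plan is to realize the action through the exact sequence of Hopf algebras attached to the quantum Frobenius map, and to read off faithfulness and the Galois property from it together with standard structural results on $\O_q(\mbb{G})$ at a root of unity. Write $\ell=\mrm{ord}(q)$. First I would recall that the quantized coordinate algebra $\O_q(\mbb{G})$ --- the span of matrix coefficients of the type-$1$ finite-dimensional $u_q(\mfk{g})$-modules --- is a domain which is module-finite over its center; at an odd root of unity this is part of the quantum Frobenius theory of De Concini--Lyubashenko (cf.\ Brown--Goodearl), where in fact $\O_q(\mbb{G})$ is \emph{free} of rank $\ell^{\dim\mbb{G}}$ over the central copy of $\O(\mbb{G})$ embedded by the dual Frobenius map $\mrm{Fr}^\ast$. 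Hence $Q:=\Frac(\O_q(\mbb{G}))$ is a central simple division algebra. The relevant structural input is the exact sequence of Hopf algebras
\[
\O(\mbb{G})\xrightarrow{\ \mrm{Fr}^\ast\ }\O_q(\mbb{G})\xrightarrow{\ p\ }u_q(\mfk{g})^\ast,
\]
exhibiting $\O_q(\mbb{G})$ as a right $u_q(\mfk{g})^\ast$-comodule algebra via $(\mrm{id}\ot p)\Delta$ with coinvariants $\mrm{Fr}^\ast(\O(\mbb{G}))$. Dualizing the comodule structure yields a left $u_q(\mfk{g})$-module algebra structure on $\O_q(\mbb{G})$, and by Theorem~\ref{thm:indQ} this extends uniquely to an action on $Q$.

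For inner faithfulness I would use the pairing $\langle -,-\rangle\colon u_q(\mfk{g})\ot\O_q(\mbb{G})\to\mbb{C}$ that drives the action; it factors as $u_q(\mfk{g})\ot\O_q(\mbb{G})\xrightarrow{\mrm{id}\ot p}u_q(\mfk{g})\ot u_q(\mfk{g})^\ast\to\mbb{C}$ through the perfect evaluation pairing. Since $p$ is surjective, this is nondegenerate on the $u_q(\mfk{g})$-side: for $0\neq u$ there is $f$ with $\langle u,f\rangle\neq0$. Because $\epsilon(u\rightharpoonup f)=\langle u,f\rangle$, nondegeneracy forces the structure map $u_q(\mfk{g})\to\End_\mbb{C}(\O_q(\mbb{G}))$ to be injective, so the action is faithful, hence inner faithful. (One could instead verify Corollary~\ref{cor:fate} directly: the grouplikes act faithfully since every weight occurs among the matrix coefficients, and each Chevalley generator acts as a nonzero weight-shifting skew derivation, so the restriction to $\Prim(u_q(\mfk{g}))'$ is injective.)

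For the Hopf-Galois property I would invoke Theorem~\ref{thm:Gal} and compute $\mrm{rank}_{Q^{u_q(\mfk{g})}}Q$. The invariants of the action are exactly the coinvariants of the comodule structure, namely $\mrm{Fr}^\ast(\O(\mbb{G}))$; localizing, $Q^{u_q(\mfk{g})}=\Frac(\O(\mbb{G}))=\mbb{C}(\mbb{G})$ and $Q=\mbb{C}(\mbb{G})\ot_{\O(\mbb{G})}\O_q(\mbb{G})$. The freeness recalled above then gives $\mrm{rank}_{\mbb{C}(\mbb{G})}Q=\ell^{\dim\mbb{G}}=\dim u_q(\mfk{g})$, the last equality because $\dim u_q(\mfk{g})=\ell^{\dim\mfk{g}}$ and $\dim\mbb{G}=\dim\mfk{g}$. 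By Theorem~\ref{thm:Gal} the extension $Q^{u_q(\mfk{g})}\subset Q$ is therefore $u_q(\mfk{g})$-Galois.

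The main obstacle is assembling the structural inputs of the first and third paragraphs rather than any new computation: once the Frobenius exact sequence and the freeness of $\O_q(\mbb{G})$ over its classical central subalgebra are in hand, faithfulness is essentially formal and the Galois property is a rank count. The step I would take most care over is the identification of the fixed ring with $\mrm{Fr}^\ast(\O(\mbb{G}))$ --- i.e.\ confirming there are no invariants beyond the quantum-Frobenius image --- since this is what pins down both $Q^{u_q(\mfk{g})}$ and the exact rank $\ell^{\dim\mbb{G}}$.
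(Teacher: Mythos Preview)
Your argument follows essentially the same route as the paper's: define the action via (the dual of) the quantum Frobenius exact sequence, obtain faithfulness from the surjection $\O_q(\mbb{G})\twoheadrightarrow u_q(\mfk{g})^\ast$, and verify the Hopf-Galois property by the rank count of Theorem~\ref{thm:Gal} after identifying the invariants with $\O(\mbb{G})$. One slip to correct: $\O_q(\mbb{G})$ should be described as matrix coefficients of type-$1$ finite-dimensional modules for the \emph{big} quantum group $U_q(\mfk{g})$ (equivalently, as the finite dual of Lusztig's divided-powers form, which is how the paper sets it up), not for $u_q(\mfk{g})$ itself---the latter would yield only the finite-dimensional algebra $u_q(\mfk{g})^\ast$, and none of the subsequent statements about domains, centers, or ranks would make sense.
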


\begin{proof}
By definition, $\O_q(\mbb{G})$ is the finite dual of the Lusztig, divided powers, quantum group $U_q(\mfk{g})$.  We have the action of $u_q(\mfk{g})$ on $\O_q(\mbb{G})$ by left translation
\[
x\cdot f:=(a\mapsto f(ax))\ \ \text{for }x\in u_q(\mfk{g}),\ f\in \O_q(\mbb{G}).
\]
This action is faithful as it reduces to a faithful action of $u_q(\mfk{g})$ on the quotient $u_q(\mfk{g})^\ast$.
\par

The exact sequence $\mbb{C}\to u_q(\mfk{g})\to U_q(\mfk{g})\to U(\mfk{g})\to \mbb{C}$~\cite{lusztig90II} gives an exact sequence
\[
\mbb{C}\to \O(\mbb{G})\to \O_q(\mbb{G})\to u_q(\mfk{g})^\ast\to \mbb{C}.
\]
(By an exact sequence $\mbb{C}\to A\to B\to C\to \mbb{C}$ we mean that $A\to B$ is a faithfully flat extension with $B\ot_A\mbb{C}\cong C$, and that $A$ is the $C$-coinvariants in $B$.)  The subalgebra $\O(\mbb{G})$ is central in $\O_q(\mbb{G})$, and $\O_q(\mbb{G})$ is finite over $\O(\mbb{G})$.  Furthermore, $\O_q(\mbb{G})$ is a domain~\cite[III.7.4]{browngoodearl00}.  So we take the algebra of fractions $\Frac(\O_q(\mbb{G}))$ to arrive at a central simple division algebra on which $u_q(\mfk{g})$ acts inner faithfully.
\par

As for the Hopf-Galois property, faithful flatness of $\O_q(\mbb{G})$ over $\O(\mbb{G})$ implies that $\O_q(\mbb{G})$ is a locally free $\O(\mbb{G})$-module, and also $\O(\mbb{G})=\O_q(\mbb{G})^{u_q(\mfk{g})}$~\cite[Theorem 2.1]{masuokawigner94}.  From the equality $\Frac(\O_q(\mbb{G}))=\Frac(\O(\mbb{G}))\ot_{\O(\mbb{G})}\O_q(\mbb{G})$ one calculates
\[
\mrm{rank}_{\Frac(\O(\mbb{G}))}\operatorname{Frac}(\O_q(\mbb{G}))=\mrm{rank}_{\O(\mbb{G})}\O_q(\mbb{G})=\dim(u_q(\mfk{g}))
\]
and $\Frac(\O(\mbb{G}))=\Frac(\O_q(\mbb{G}))^{u_q(\mfk{g})}$.  It follows that the given extension is Hopf-Galois by Theorem~\ref{thm:Gal}.
\end{proof}

\section{Proof of Theorem~\ref{thm:TQstructure}}
\label{sect:proof}

We first establish some general information regarding skew derivations of central simple algebras, then provide the proof of Theorem~\ref{thm:TQstructure}.

\subsection{Bimodules in Yetter-Drinfeld categories and skew derivations}
\label{sect:YD}

Given a field $K$ we write $\YD_K(G)$ for the category of Yetter-Drinfeld modules over the group algebra $KG$.  We always assume $K$ is of characteristic $0$.

\begin{lemma}\label{lem:161}
Let $A$ be an algebra in $\YD_K(G)$.  There is an equivalence of categories between the subcategory of $A$-bimodules in $\YD_K(G)$ and right $A^{\underline{op}}\underline{\ot}_K A$-modules in $\YD_K(G)$.  This equivalence takes a bimodule $M$ to the Yetter-Drinfeld module $M$ along with the right $A^{\underline{op}}\underline{\ot}_KA$-action $m\cdot (a\ot b):=(m_{-1}a)m_0b$.
\end{lemma}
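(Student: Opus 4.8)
The plan is to realize this as the braided analogue of the classical identification of $A$-bimodules with right modules over the enveloping algebra $A^{\mrm{op}}\ot A$, where now both the opposite and the tensor product are taken in the braided sense and the left $A$-action on a bimodule is recorded \emph{through the braiding} rather than directly. The conceptual backbone is the standard fact that in a braided category the braiding converts left modules into right modules over the braided opposite: for a left action $L\colon A\ot M\to M$ the composite $L\circ c_{M,A}\colon M\ot A\to M$, $m\ot a\mapsto (m_{-1}a)m_0$, is a right action of $A^{\underline{op}}$, and its verification is just naturality of $c$ together with the Yetter–Drinfeld compatibility. The formula $m\cdot(a\ot b)=(m_{-1}a)m_0b$ in the statement is precisely this right $A^{\underline{op}}$-action in the first slot followed by the genuine right $A$-action in the second.

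For the forward functor I would take an $A$-bimodule $M$, keep the underlying Yetter–Drinfeld module, impose the stated formula, and check: (i) it is a unital associative right action of $A^{\underline{op}}\underline{\ot}A$; (ii) it is a morphism in $\YD_K(G)$; and (iii) it is functorial. Unitality is immediate, since $m\cdot(1\ot 1)=m$ using that $1\in A$ is $G$-invariant and coinvariant. That the action is a $\YD_K(G)$-morphism is automatic, as it is assembled from the braiding, the left action, and the right action, each a morphism in $\YD_K(G)$. Functoriality is clear because bimodule morphisms are exactly the maps intertwining the recovered actions.

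For the inverse functor I would send a right $A^{\underline{op}}\underline{\ot}A$-module $M$ to the bimodule whose right action is $m\triangleleft b:=m\cdot(1\ot b)$ and whose left action is recovered by inverting the braiding: setting $\ell(m\ot a):=m\cdot(a\ot 1)$, I put $a\triangleright m:=\ell\big(c_{M,A}^{-1}(a\ot m)\big)$. The key structural inputs are that $1\ot A\cong A$ and $A\ot 1\cong A^{\underline{op}}$ are subalgebras of $A^{\underline{op}}\underline{\ot}A$, and that the cross relation $(a\ot 1)(1\ot b)=a\ot b$ holds (a one-line check from the braided tensor product multiplication). Using these with associativity of the given action, one verifies that $\triangleright$ and $\triangleleft$ are commuting $A$-actions, i.e.\ a bimodule structure, and that the two functors are mutually inverse: the composite bimodule $\to$ right-module $\to$ bimodule is the identity by construction of $\ell$ and $c_{M,A}^{-1}$, while the reverse composite is the identity via $m\cdot(a\ot b)=m\cdot\big((a\ot 1)(1\ot b)\big)$, using associativity and the cross relation.

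The main obstacle is the associativity verification in step (i). One must expand the product in the braided tensor product algebra,
\[
(a\ot b)(a'\ot b')=\big(((a_{-1}b_{-1})a')\,a_0\big)\ot(b_0b'),
\]
and match $\big(m\cdot(a\ot b)\big)\cdot(a'\ot b')$ with $m\cdot\big((a\ot b)(a'\ot b')\big)$. Expanding the left-hand side forces one to compute the coaction on the partially acted-upon element $\big((m_{-1}a)m_0\big)\triangleleft b$, and it is exactly the Yetter–Drinfeld compatibility $\rho(g\cdot v)=(gv_{-1}g^{-1})\ot gv_0$ — equivalently, naturality of the braiding together with the interaction of the $G$-action and $G$-coaction — that makes the two sides coincide. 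Once this single computation is secured the remaining assertions are formal. In the abelian case relevant to the rest of the paper the $G$-action and coaction simply commute, so the bookkeeping of group degrees in this computation simplifies considerably.
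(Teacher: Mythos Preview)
Your proposal is correct and is precisely the ``straightforward direct check'' the paper invokes as its entire proof; you have simply written out the verification the paper leaves to the reader, and your identification of the associativity step (and the role of the Yetter--Drinfeld compatibility in computing the coaction on $(m_{-1}a)m_0b$) as the only nontrivial point is accurate.
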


\begin{proof}
Straightforward direct check.
\end{proof}

Recall that in characteristic $0$, a finite-dimensional semisimple $K$-algebra $A$ is separable over $K$.

\begin{lemma}\label{lem:698}
Let $G$ be an abelian group and $A$ be an algebra in $\YD(G)$, which is semisimple as a $\mbb{C}$-algebra.  Let $K$ be a central invariant subfield in $A$ over which $A$ is finite.  Then the algebra $A$ is projective as an $A^{\underline{op}}\underline{\ot}_K A$-module.
\end{lemma}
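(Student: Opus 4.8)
The plan is to realise $A$ as a separable algebra in the braided category $\YD_K(G)$, i.e.\ to produce a braided separability idempotent. By Lemma~\ref{lem:161} an $A$-bimodule in $\YD_K(G)$ is the same thing as a right module over the braided enveloping algebra $A^e:=A^{\underline{op}}\underline{\ot}_K A$, and under this identification the regular bimodule $A$ becomes the cyclic right $A^e$-module generated by $1\in A$. The relevant surjection is the module structure map $m\colon A^e\to A$, $m(z)=1\cdot z$, which is a morphism in $\YD_K(G)$ because the $A^e$-action is. Since $A^e$ is free over itself, $A$ is projective over $A^e$ precisely when $m$ splits by a morphism of right $A^e$-modules \emph{in} $\YD_K(G)$. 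Thus the whole statement reduces to constructing such a splitting.

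First I would forget the Yetter--Drinfeld structure. As $A$ is finite over the trivially graded central field $K$, only finitely many $G$-degrees and $G$-characters occur in $A$, so the action and coaction factor through a finite abelian quotient of $G$; replacing $G$ by this quotient I may assume $G$ finite. Forgetting the braiding, $A^e$ is a bicharacter twist of the classical enveloping algebra $A^{op}\ot_K A$. Since $A$ is finite-dimensional and semisimple over the field $K$ in characteristic $0$ it is separable over $K$, so $A^{op}\ot_K A$ is semisimple; a bicharacter twist by the finite grading group preserves semisimplicity, and hence $A^e$ is semisimple as an abstract $\mbb{C}$-algebra. In particular $m$ admits \emph{some} plain right $A^e$-linear splitting $s_0\colon A\to A^e$.

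It then remains to make $s_0$ equivariant. With $G$ finite, $\YD_K(G)=\mrm{Rep}(D(G))$ for the semisimple Drinfeld double $D(G)$, which carries a normalized two-sided integral $\Lambda$. The space $\Hom_K(A,A^e)$ is a $D(G)$-module, and the right $A^e$-linear maps form a $D(G)$-submodule because $A^e$ is an algebra and $A$ a module in $\YD_K(G)$; its invariants are exactly the right $A^e$-linear morphisms of $\YD_K(G)$. Setting $\mathbf{p}:=\Lambda\cdot s_0$ therefore produces a right $A^e$-linear morphism in $\YD_K(G)$, and since $m$ is itself a morphism and $\mathrm{id}_A$ is $D(G)$-invariant we obtain $m\circ\mathbf{p}=\Lambda\cdot(m\circ s_0)=\Lambda\cdot \mathrm{id}_A=\mathrm{id}_A$. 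Hence $\mathbf{p}$ splits $m$ in $\YD_K(G)$ and $A$ is projective over $A^e$.

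The main obstacle is reconciling the classical separability of $A/K$ with the twisted (braided) multiplication defining $A^e$: an ordinary separability idempotent respects neither the action, the coaction, nor the braiding. The two devices above are what resolve this — semisimplicity of the twisted enveloping algebra $A^e$ guarantees a plain splitting, and Maschke-style averaging against the integral of the semisimple double $D(G)$ promotes that splitting to a genuine morphism in $\YD_K(G)$. The one delicate point in the averaging is that it must preserve right $A^e$-linearity, which holds precisely because $m$ and the $A^e$-action are morphisms in the braided category, so $D(G)$ stabilises the subspace of $A^e$-linear maps.
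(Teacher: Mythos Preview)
Your argument is correct, but it takes a noticeably different route from the paper. The paper stays on the unbraided side throughout: it picks an ordinary separability idempotent $e\in A\otimes_K A$, decomposes it with respect to the $G\times G^\vee$-bigrading on $A$, and keeps only the total-degree-zero part $e'=\sum_g e_g\otimes e_{g^{-1}}$. Because multiplication and the bimodule actions are graded, $e'$ is again a separability idempotent and is homogeneous of degree $0$; Lemma~\ref{lem:161} then transfers the resulting $A$-bimodule splitting to a splitting of $A^{\underline{op}}\underline{\ot}_K A\to A$. No property of the braided enveloping algebra beyond Lemma~\ref{lem:161} is used.

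Your approach instead works directly with $A^e=A^{\underline{op}}\underline{\ot}_K A$: you observe that $A^e$ is a bicharacter twist of the classical $A^{op}\otimes_K A$, hence semisimple, so a plain $A^e$-linear section exists; then you average by the integral of $D(G)$ to make it a morphism in $\YD_K(G)$. This is more categorical and makes the role of $D(G)$-semisimplicity explicit, at the cost of needing the extra (though easy) input that twisting preserves semisimplicity and that the $D(G)$-action stabilises $A^e$-linear maps. The two proofs are close in spirit---your averaging by $\Lambda$ and the paper's ``take the degree-$0$ piece'' are the same operation, since $D(G)\cong \mbb{C}[G\times G^\vee]$ for finite abelian $G$---but the paper applies it to a separability idempotent of $A$ itself, avoiding any discussion of $A^e$ as an algebra. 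The paper's argument is shorter and needs only classical separability of $A/K$; yours generalises more transparently to other semisimple braided settings.
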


\begin{proof}
Since $G$ is abelian, the Yetter-Drinfeld structure on $A$ is equivalent to a $G\times G^\vee$-grading on $A$.  Take $G'=G\times G^\vee$.  We claim that $A\ot_K A\to A$ admits a homogeneous degree $0$ section, as a map of bimodules.  To see this one simply takes an arbitrary separability idempotent $e$ and expands $e=\sum_{g,h\in G'}e_g\ot e_h$ with each $e_g\ot e_h\in A_g\ot_K A_h$.  Take $e'=\sum_g e_g\ot e_{g^{-1}}$.  Since the multiplication on $A$ is  homogeneous we see that $m(e')=1$.  Furthermore, since the multiplication on the right and left of $A\ot A$ preserves the grading, we see that $a e'=e'a$ for each homogeneous $a\in A$, and hence each $a\in A$.  So the map $A\to A\ot_K A$, $1\mapsto e'$, provides a degree $0$ splitting of the multiplication map.  By Lemma~\ref{lem:161} we see that the projection
\[
A^{\underline{op}}\underline{\ot}_K A\to A,\ \ a\ot b\mapsto ab
\]
is split as well, and hence that $A$ is projective over $A^{\underline{op}}\underline{\ot}_K A$.
\end{proof}

\begin{lemma}\label{lem:inn_der}
Take $G$ abelian, and let $A$ be a $G$-module central semisimple algebra.  Let $K$ be a central invariant subfield over which $A$ is finite, and let $M$ be a $K$-central $A$-bimodule in $\mrm{Rep}(G)$.  Then every $K$-linear, homogeneous, $(g,1)$-skew derivation $f:A\to M$, for $g\in G$, is inner.
\end{lemma}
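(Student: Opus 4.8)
The plan is to reduce this braided/skew statement to the classical fact that every $K$-linear derivation from a separable $K$-algebra into an arbitrary bimodule is inner, the reduction being a twist of $M$ by the grouplike $g$. Recall first that an inner $(g,1)$-skew derivation is by definition one of the form $a\mapsto ma-(g\cdot a)m$ for a fixed $m\in M$, and a direct check (using that $g$ acts as an algebra automorphism) confirms this does satisfy $f(ab)=f(a)b+(g\cdot a)f(b)$. So the content of the lemma is that there are no others.

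First I would introduce the twisted bimodule $M^{(g)}$: the same underlying $G$-representation $M$ with the same right $A$-action, but with left action $a\cdot_g m:=(g\cdot a)m$. Since $g$ acts by an algebra automorphism fixing the invariant subfield $K$ pointwise, $M^{(g)}$ is again a $K$-central $A$-bimodule in $\mrm{Rep}(G)$, and the defining identity $f(ab)=f(a)b+(g\cdot a)f(b)$ becomes precisely the ordinary Leibniz rule $f(ab)=f(a)b+a\cdot_g f(b)$. Thus $f$ is an honest $K$-linear derivation $A\to M^{(g)}$, and it suffices to show every such derivation is inner.

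Next I would invoke separability. As $A$ is finite-dimensional and semisimple over $K$ in characteristic $0$ it is separable over $K$; concretely, by (the proof of) Lemma~\ref{lem:698} the multiplication $A^{\underline{op}}\underline{\ot}_K A\to A$ splits through a separability idempotent $e'=\sum_i x_i\ot y_i$ which is $G\times G^\vee$-homogeneous of degree $0$, so that $\sum_i x_iy_i=1$ and $\sum_i ax_i\ot y_i=\sum_i x_i\ot y_i a$. The standard separable-algebra computation then applies verbatim to $f\colon A\to M^{(g)}$: setting $m:=-\sum_i(g\cdot x_i)f(y_i)$ and combining the two idempotent identities with the Leibniz rule for $M^{(g)}$ yields $f(a)=ma-(g\cdot a)m$, exhibiting $f$ as inner.

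The only genuinely delicate point, and the reason the graded refinement in Lemma~\ref{lem:698} is needed, is homogeneity of the resulting $m$. Innerness alone requires merely an ordinary separability idempotent; but the downstream use in Theorem~\ref{thm:TQstructure} needs $m$ to be homogeneous of the same character-degree as $f$. Writing $\psi$ for the character by which the homogeneous $f$ raises $G^\vee$-degree (so $f(A_\chi)\subseteq M_{\chi\psi}$), and using that degree-$0$-ness of $e'$ forces $\deg_{G^\vee}x_i=(\deg_{G^\vee}y_i)^{-1}$, each summand $(g\cdot x_i)f(y_i)$ sits in $G^\vee$-degree $\psi$, so $m$ is homogeneous of degree $\psi$. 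I expect this bookkeeping, making the separability idempotent respect the $G\times G^\vee$-grading so that $m$ inherits the degree of $f$, to be the main technical obstacle; the identity $f(a)=ma-(g\cdot a)m$ itself is then just the classical argument.
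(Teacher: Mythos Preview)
Your argument is correct and takes a genuinely different route from the paper. The paper equips $A$ and a shift $M[h]$ with Yetter--Drinfeld structures (via a chosen nondegenerate form on $G$), then invokes the identification $\mathrm{Ext}^1_{A^{\underline{op}}\underline{\otimes}_K A}(A,M[h])\cong\{\text{skew derivations}\}/\{\text{inner}\}$ from \cite{negron} together with the projectivity of $A$ established in Lemma~\ref{lem:698}. You instead untwist the problem: the bimodule $M^{(g)}$ converts the $(g,1)$-skew derivation into an ordinary $K$-derivation, after which the classical separability argument produces an explicit $m$. Your approach is more elementary and self-contained (no braided $\mathrm{Ext}$ machinery, no external citation), and it yields a closed formula for $m$; the paper's approach is more conceptual and places the result in the framework of braided Hochschild cohomology, which is useful if one wants higher-degree analogues.

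One small point of hygiene: in the setting of Lemma~\ref{lem:inn_der} you only have a $G$-action on $A$, hence only a $G^\vee$-grading; there is no $G$-coaction, so speaking of $A^{\underline{op}}\underline{\otimes}_K A$ and a ``$G\times G^\vee$-homogeneous'' idempotent is not quite right. What you actually need, and what the \emph{proof} of Lemma~\ref{lem:698} provides for any abelian-group grading, is a $G^\vee$-degree-$0$ separability idempotent in the ordinary $A\otimes_K A$. With that adjustment your homogeneity bookkeeping goes through exactly as written.
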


By homogeneous we mean the following: if we decompose $A$ and $M$ into character spaces $A=\oplus_\mu A_\mu$, $M=\oplus_\mu M_\mu$, then $f(A_\mu)\subset M_{\mu\sigma}$ for some fixed $\sigma\in G^\vee$.  So $f$ is homogeneous of degree $\sigma$ here.  By an inner skew derivation we mean there is $c\in M_\sigma$ so that $f=[c,-]_\mrm{sk}:a\mapsto (ca-(g\cdot a)c)$.

\begin{proof}
Take $\sigma=\deg_{G^\vee}(f)$.  We choose a non-degenerate form $b:G\times G\to \mbb{C}^\times$ and let $G^\vee$ act on $A$ and $M$ via the isomorphism $f_b:G^\vee\to G$ provided by the form.  Then we decompose $A$ and $M$ into character spaces $A=\oplus_\mu A_\mu$ and $M=\oplus_\mu M_\mu$, and the corresponding $G$-gradings $A=\oplus_g A_g$ and $M=\oplus_g M_g$ are such that $A_g=A_{\mu}$ and $M_g=M_{\mu}$ for $\mu$ with $g=f_b(\mu)$.  There is a unique shift $M[h]$ of the $G$-grading on $M$ so that $M_{\sigma}=(M[h])_g$.  In this way $A$ and $M[h]$ are objects in $\YD_K(G)$, and $M[h]$ is an $A$-bimodule in $\YD_K(G)$.
\par

Consider $M[h]$ as an $A^{\underline{op}}\underline{\ot}_KA$-module.  As in~\cite[Proposition 3.3(1)]{negron}, one can show that
\[
\mathrm{Ext}^1_{A^{\underline{op}}\underline{\ot}_K A}(A,M[h])=\{\mathrm{Skew\ derivations}\}/\{\mathrm{Inner\ derivations}\}.
\]
Since $A$ is separable, this cohomology group vanishes.  Whence we conclude that each skew derivation of $M$ is inner.
\end{proof}

\subsection{Proof of Theorem~\ref{thm:TQstructure}}

We consider again the algebra $T(n,m,\alpha)$.  We will need the following result.

\begin{proposition}[{\cite[Proposition 3.9]{etingof15}}]\label{prop:735}
Suppose $H$ is a finite-dimensional Hopf algebra acting on an algebra $A$ which is finite over its center.  Then $A$ is finite over the invariant part of its center $Z(A)^H=Z(A)\cap A^H$.
\end{proposition}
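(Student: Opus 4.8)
```latex
\textbf{Proof strategy for Proposition~\ref{prop:735}.}

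The plan is to reduce the statement to a statement about the invariant subring $A^H$ and then apply standard commutative finiteness results to the center. First I would set $Z=Z(A)$, which is a commutative domain (or at least a commutative ring) over which $A$ is finite by hypothesis. The group-like part of $H$ need not be what matters here; rather the key point is that $H$ acts on $A$ by algebra automorphisms/derivations in a way that preserves the center $Z$, since any Hopf action sends $Z(A)$ to itself (the center is characterized internally by commutation relations, which are $H$-stable because $H$ acts by module-algebra maps). So the $H$-action restricts to an action on the commutative ring $Z$, and $Z^H=Z\cap A^H$ is the invariant subring $Z(A)^H$ appearing in the statement.

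The heart of the argument is to show that $Z$ is finite as a module over $Z^H$. Because $H$ is finite-dimensional, the integral of $H$ gives a finiteness mechanism: for a finite-dimensional Hopf algebra acting on a commutative algebra, the algebra is integral over its invariants. Concretely, I would invoke (or reprove via the standard Hopf-algebraic averaging/trace argument) the fact that each element $z\in Z$ satisfies a monic polynomial over $Z^H$ whose coefficients are built from the ``symmetric functions'' of the $H$-orbit of $z$; this is the noncommutative generalization of the Noether-style finiteness for group actions, valid for any finite-dimensional $H$. Since $Z$ is already Noetherian and finitely generated as a $\mbb{C}$-algebra (it is the center of an algebra finite over a Noetherian base in char $0$), integrality of $Z$ over $Z^H$ upgrades to module-finiteness of $Z$ over $Z^H$. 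Composing, $A$ is finite over $Z$ and $Z$ is finite over $Z^H=Z(A)^H$, so $A$ is finite over $Z(A)^H$ by transitivity of finiteness.

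The main obstacle I anticipate is establishing that $Z$ is integral over $Z^H$ without assuming semisimplicity or cocommutativity of $H$: for a general finite-dimensional Hopf algebra the ``averaging'' operator given by a two-sided integral need not be a projection onto invariants, so one cannot simply average to produce the elementary symmetric functions. The correct device is the one used by Etingof in~\cite{etingof15}, exploiting that $H$-module algebra structure forces each $z\in Z$ to generate a finite-dimensional $H$-submodule $H\cdot z\subset Z$ (since $\dim H<\infty$), and the subalgebra generated by this finite-dimensional $H$-stable subspace is then shown to be integral over its invariants via the characteristic-$0$ reductivity of the associated algebraic object, or via a direct Cayley--Hamilton argument on the action of $H$ on the finite-dimensional space $H\cdot z$. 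Making this integrality precise, rather than the transitivity bookkeeping, is the genuinely nontrivial step, and it is precisely where the cited result~\cite[Proposition 3.9]{etingof15} does the work.
```
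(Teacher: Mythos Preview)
The paper does not prove this proposition at all; it is simply quoted from \cite{etingof15}. So there is no ``paper's proof'' to compare against --- only the original source.

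That said, your outline has a genuine gap at the very first step. You assert that ``any Hopf action sends $Z(A)$ to itself'' because ``the center is characterized internally by commutation relations, which are $H$-stable because $H$ acts by module-algebra maps.'' This is false for non-cocommutative $H$. The module-algebra axiom gives $h\cdot(ab)=\sum (h_1\cdot a)(h_2\cdot b)$, which is \emph{not} $h\cdot(ab)=(h\cdot a)(h\cdot b)$ unless $h$ is grouplike; in particular a $(g,1)$-skew primitive $x$ acts as a \emph{skew} derivation, and skew derivations do not preserve centers. Concretely, for central $z$ and arbitrary $a$ one computes
\[
(x\cdot z)\,a-(g\cdot a)\,(x\cdot z)=(z-g\cdot z)\,(x\cdot a),
\]
so $x\cdot z$ lands in a $g$-twisted commutant, not in $Z(A)$, and is generically nonzero when $g\cdot z\neq z$. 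The generalized Taft actions constructed elsewhere in the paper already exhibit this phenomenon. Since your entire transitivity argument (``$A$ finite over $Z$, $Z$ finite over $Z^H$'') rests on restricting the $H$-action to the commutative algebra $Z$, it does not get off the ground.

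There is a second, smaller gap: you assume $Z$ is Noetherian and finitely generated over $\mathbb{C}$ in order to upgrade integrality to module-finiteness, but no such hypothesis is present in the statement. And your final paragraph effectively concedes that the real content is whatever mechanism \cite{etingof15} supplies --- which is precisely why the paper just cites the result. For the record, the argument there does not proceed by restricting to $Z(A)$; it uses that $A$, being finite over its center, is a PI algebra, and then exploits the smash product and integrality results for PI (co)module algebras to control $A$ over $Z(A)^H$ directly.
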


From a $G$-module algebra $A$, an element $c\in A_i$, and fixed $g\in G$, we let $[c,-]_\mrm{sk}:A\to A$ denote the endomorphism $[c,a]_\mrm{sk}:=ca-(g\cdot a)c$.  We now prove Theorem~\ref{thm:TQstructure}.

\begin{proof}[Proof of Theorem~\ref{thm:TQstructure}]
Take $G=G(T(n,m,\alpha))=\langle g\rangle$, and $\zeta$ a primitive $n$-th root of $1$ with $\zeta^{n/m}=q$.  We fix $A$ a $G$-module central simple algebra, which we decompose as $A=\oplus_{i=1}^nA_i$ so that $g|_{A_i}=\zeta^i\cdot-$.  We claim that, for an arbitrary element $c\in A_{n/m}$, we have
\begin{equation}\label{eq:743}
[c,-]^m_\mrm{sk}(a)=c^ma-\zeta^{m|a|}ac^m.
\end{equation}
The skew commutator here employs the action of the generator $g$.  The equality~\eqref{eq:743} will imply the desired result, as for any $T(n,m,\alpha)$-action on $A$, which extends the given action of $G$, we will have $x\cdot-=[c,-]_\mrm{sk}$ for some $c\in A_{n/m}$ by Lemma~\ref{lem:inn_der}.  In our application of Lemma~\ref{lem:inn_der} here we take $K=Z(A)^T$.  So we seek to prove~\eqref{eq:743}.
\par

We note that
\[
q^{m(m-1)/2}=\left\{\begin{array}{ll} q^{m/2}=-1 & \text{when $m$ is even}\\ 1 & \text{when $m$ is odd}\end{array}\right.
=(-1)^{m+1}.
\]
So the desired relation~\eqref{eq:743} can be rewritten as
\begin{equation}\label{eq:comm_rel2}
[c,-]^m_\mrm{sk}(a)=c^ma+(-1)^m\zeta^{m|a|}q^{m(m-1)/2}ac^m.
\end{equation}

We have directly
\begin{equation}\label{eq:242}
[c,-]^m_\mrm{sk}(a)=c^ma+\sum_{l=1}^m (-1)^l\zeta^{l|a|}\omega_lc^{m-l}a c^l,
\end{equation}
for coefficients $\omega_i\in \mbb{Q}(\zeta)$.  The coefficient $\omega_l$ can be deduced as follows: Each $c$ appearing on the right of $c^{m-l}ac^l$ indicates an integer $i$ so that at the $i$-th iteration of 
\[
\begin{array}{rl}
[c,[c,-]^{i-1}_\mrm{sk}(a)]_\mrm{sk}&=c[c,-]^{i-1}_\mrm{sk}(a)-\zeta^{|a|+(i-1)m}[c,-]^{i-1}_\mrm{sk}(a)c\\
&=c[c,-]^{i-1}_\mrm{sk}(a)-\zeta^{|a|}q^{(i-1)}[c,-]^{i-1}_\mrm{sk}(a)c
\end{array}
\]
we take the summand $q^{(i-1)}[c,-]^{i-1}_\mrm{sk}(a)c$.  Each choice of $l$ such distinct positions $\{k_1,\dots,k_l\}\subset \{1,\dots, m\}$ contributes a summand with $q$-coefficient $(\prod_{j=1}^lq^{(k_j-1)})$.  Take $[m-1]=\{0,\dots, m-1\}$.  Considering all possible choices for the subset $\{k_1,\dots, k_l\}$ gives
\begin{equation}\label{eq:771}
\omega_l=\sum_{1\leq k_1< \dots < k_l\leq m}\left(\prod_{j=1}^lq^{(k_j-1)}\right)=\sum_{\underset{|I|=l}{I\subset[m-1]}}(\prod_{i\in I}q^{i}).
\end{equation}
When $l=1$ the above sum gives 
\[
\omega_1=(1+q+\dots +q^{m-1})=\frac{1-q^m}{1-q}=0.
\]
and
\[
\omega_m=\prod_{i=0}^{m-1}q^i=q^{\sum_{i=1}^m i}=q^{m(m-1)/2}
\]
We want to show $\omega_l=0$ for all $0<l<m$.

We can rewrite the sum of products~\eqref{eq:771} as a product of sums
\begin{equation}\label{eq:785}
\omega_l=\frac{1}{l!}(\sum_{j_1\in [m-1]}q^{j_1}(\sum_{j_2\in [m-1]-\{j_1\}}q^{j_2}(\dots (\sum_{j_l\in [m-1]-\{j_1,\dots,j_l\}}q^{j_l})\dots))).
\end{equation}
Take
\[
\omega_l(j)=\sum_{j\notin I}(\prod_{i\in I}q^{i})\ \ \text{and}\ \ \omega_l'(j)=\sum_{j\in I}(\prod_{i\in I}q^{i}),
\]
where in the first sum $I$ runs over size $l$ subsets of $[m-1]$ which do not contain the given $j\in[m-1]$, and the second sum runs over subsets containing $j$.  Then
\begin{equation}\label{eq:275}
\omega_l=\omega_l(j)+\omega_l'(j).
\end{equation}
Note that $\omega_l'(j)=q^j\omega_{l-1}(j)$, where $\omega_0(j)$ is formally taken to be $1$.  Then the expression~\eqref{eq:785} gives
\begin{equation}\label{eq:279}
\omega_l=\frac{1}{l!}\left(\sum_{j=0}^{m-1}q^{j}(l-1)!\!\ \omega_{l-1}(j)\right) =\frac{1}{l}\sum_{j=0}^{m-1}q^{j}\omega_{l-1}(j).
\end{equation}

We have already seen that $\omega_1=0$.  We take $l<m$ and suppose that $\omega_k=0$ for all $k<l$.  Then the decomposition $\omega_k=\omega_k(j)+\omega_k'(j)$ for all $j\in [m-1]$ implies
\[
\omega_k(j)=-\omega_k'(j)=-q^j\omega_{k-1}(j-1)
\]
for all $k<l$ and $j$.  Hence, from~\eqref{eq:279},
\[
\begin{array}{rl}
\omega_l & =l^{-1}\sum_{j=0}^{m-1}q^{j}\omega_{l-1}(j)\\
& =-l^{-1}\sum_{j=0}^{m-1}q^j\omega_{l-1}'(j)\\
& =-l^{-1}\sum_{j=0}^{m-1}q^{2j}\omega_{l-2}(j)\\
& =(-1)^2l^{-1}\sum_{j=0}^{m-1}q^{2j}\omega_{l-2}'(j)\\
&\vdots\\
&=(-1)^{l-1} l^{-1}\sum_{j=0}^{m-1}q^{lj}\omega_0(j)=(-1)^{l-1}l^{-1}(1-q^{lm})/(1-q^l)=0.
\end{array}
\]
Hence $\omega_l=0$ for all $l<m$.  One recalls our initial expression~\eqref{eq:242} to arrive finally at the desired equality $[c,-]^m_\mrm{sk}(a)=c^ma-\zeta^{m|a|}ac^m$.
\end{proof}

\section{Coradically graded algebras and universal actions}
\label{sect:univ}

Let us fix now a coradically graded, pointed Hopf algebra $H$ with abelian group of grouplikes.  We may write $H=B(V)\rtimes G$, with $G$ abelian and $V$ in $\YD(G)$.  Fix also a homogeneous basis $\{x_i\}_i$ for $V$ with respect to the $G\times G^\vee$-grading provided by the Yetter-Drinfeld structure.

\subsection{The universal algebra}

We consider the (Hopf) free algebra $TV$ in $\YD(G)$ as a module algebra over itself under the adjoint action
\[
a\cdot_\mrm{adj}b:=a_1\big((a_2)_{-1}b)S\big((a_2)_0\big).
\]
Consider a presentation $B(V)=TV/(r_1,\dots,r_l)$ with each $r_i$ homogeneous with respect to the $G\times G^\vee$-grading, as well as the grading on $TV$ by degree.
\par

Define $A_\mrm{univ}$ as the quotient
\[
A_\mrm{univ}=A_\mrm{univ}(V):=TV/(r_i\cdot_\mrm{adj}a:1\leq i\leq l, a\in TV).
\]
We note that $A_\mrm{univ}$ is a connected graded algebra in $\YD(G)$, as all relations can be taken to be homogeneous with respect to all gradings.  Furthermore, the adjoint action of the free algebra on itself induces an action of $TV$ on $A_\mrm{univ}$.  We let $c_i$ denote the image of $x_i\in V$ in $A_\mrm{univ}$.

\begin{lemma}
The adjoint action of $TV$ on $A_\mrm{univ}$ induces an action of $B(V)$ on $A_\mrm{univ}$.  This action is specified on the generators by $x_i\cdot a=[c_i,a]_\mrm{sk}:=c_ia-(g_i\cdot a)c_i$.
\end{lemma}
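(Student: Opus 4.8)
The plan is to show that the adjoint action of $TV$ on $A_\mrm{univ}$ descends to an action of the Nichols algebra $B(V)=TV/(r_1,\dots,r_l)$. Since $B(V)$ is by definition the quotient of $TV$ by the two-sided ideal generated by the relations $r_1,\dots,r_l$, and the action of $TV$ on $A_\mrm{univ}$ is the one induced by the adjoint action of $TV$ on itself, it suffices to verify that each relation $r_i$ acts as zero on $A_\mrm{univ}$. In other words, I must check that $r_i\cdot_\mrm{adj}\bar a=0$ in $A_\mrm{univ}$ for every $\bar a\in A_\mrm{univ}$, where $\bar a$ denotes the image of some $a\in TV$.

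First I would recall that $A_\mrm{univ}$ is \emph{defined} precisely as the quotient $TV/(r_i\cdot_\mrm{adj}a:1\leq i\leq l,\ a\in TV)$, so the elements $r_i\cdot_\mrm{adj}a$ are killed by construction. The subtlety is that the adjoint action of $TV$ on $A_\mrm{univ}$ is the action induced from the adjoint action of $TV$ on $TV$, followed by the projection $TV\to A_\mrm{univ}$. Thus for $a\in TV$ with image $\bar a$ we have $r_i\cdot_\mrm{adj}\bar a=\overline{r_i\cdot_\mrm{adj}a}$, and this is zero in $A_\mrm{univ}$ exactly because $r_i\cdot_\mrm{adj}a$ is one of the defining relations of the quotient. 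Hence each $r_i$ acts as zero, and the action of $TV$ factors through $B(V)$. The only genuine point to confirm is that the operators $r_i\cdot_\mrm{adj}(-)$ are well-defined on the quotient, i.e.\ that the defining ideal of $A_\mrm{univ}$ is stable under the adjoint action; this follows because $\mrm{ad}_\mrm{sk}$ is an algebra action, so $r_i\cdot_\mrm{adj}(TV)$ is an $\mrm{ad}$-submodule, and more concretely because the relations were chosen homogeneous and the adjoint action preserves the $G\times G^\vee$-grading.

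For the formula on generators, I would compute $\mrm{ad}_\mrm{sk}(x_i)$ directly. Using the comultiplication $\Delta(x_i)=x_i\ot 1+g_i\ot x_i$ on the primitive element $x_i$ together with the defining formula
\[
a\cdot_\mrm{adj}b=a_1\big((a_2)_{-1}b\big)S\big((a_2)_0\big),
\]
the two terms of $\Delta(x_i)$ contribute $x_i\,b\,S(1)=c_i\,b$ and $g_i\cdot\!\big((x_i)_{-1}b\big)S\big((x_i)_0\big)$. Since $x_i$ is $G$-homogeneous of degree $g_i$, the coaction gives $(x_i)_{-1}=g_i$ and $(x_i)_0=x_i$, and the antipode on the primitive is $S(x_i)=-g_i^{-1}x_i$, so after evaluating the Yetter--Drinfeld action the second term becomes $-(g_i\cdot b)c_i$. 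Collecting the two pieces yields
\[
x_i\cdot_\mrm{adj}\bar a=c_i\bar a-(g_i\cdot\bar a)c_i=[c_i,\bar a]_\mrm{sk},
\]
as claimed.

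The main obstacle, such as it is, lies not in the descent argument (which is essentially formal once one observes the ideal is the adjoint-stable one generated by the $r_i$) but in the bookkeeping of the generator formula: one must correctly evaluate the Yetter--Drinfeld coaction and antipode terms in the adjoint-action formula to confirm the clean skew-commutator $[c_i,-]_\mrm{sk}$ appears, with the correct element $g_i$ governing the skewing. I expect this sign-and-degree verification to be the only place requiring care; everything else is a direct consequence of the construction of $A_\mrm{univ}$ as a quotient by an adjoint-generated ideal.
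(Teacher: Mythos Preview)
Your proposal is correct and, if anything, supplies more detail than the paper, whose proof reads in full: ``Evident by construction.'' The descent argument you give is exactly the reason the lemma holds.

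One small point of bookkeeping in your generator computation: you invoke the braided adjoint formula $a\cdot_{\mathrm{adj}}b=a_1\big((a_2)_{-1}b\big)S\big((a_2)_0\big)$ but then feed it the \emph{bosonized} comultiplication $\Delta(x_i)=x_i\otimes 1+g_i\otimes x_i$ and the bosonized antipode $S(x_i)=-g_i^{-1}x_i$. The comultiplication and antipode in that formula are the ones internal to $TV$ as a Hopf algebra in $\YD(G)$, for which $x_i$ is honestly primitive, $\Delta_R(x_i)=x_i\otimes 1+1\otimes x_i$, and $S_R(x_i)=-x_i$. With these, the two terms are $x_i(1\cdot b)S_R(1)=c_ib$ and $1\,(g_i\cdot b)S_R(x_i)=-(g_i\cdot b)c_i$, yielding $[c_i,b]_{\mathrm{sk}}$ directly. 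Your final formula is right; only the intermediate justification mixes the braided and bosonized frameworks.
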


\begin{proof}
Evident by construction.
\end{proof}

Since each relation for $B(V)$ in $TV$ must act trivially on $A_\mrm{univ}$ we have immediately

\begin{corollary}
For any $r$ in the kernel of the projection $TV\to B(V)$, and arbitrary $a\in TV$, $A_\mrm{univ}$ has the relation $r\cdot_{\rm adj}a=0$.  In particular, the $B(V)$-module algebra $A_\mrm{univ}$ is independent of the choice of relations for $B(V)$.
\end{corollary}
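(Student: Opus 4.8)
The plan is to read the statement as a direct consequence of the preceding lemma, whose content is that the adjoint action of $TV$ on $A_\mrm{univ}$ descends from the adjoint action of $TV$ on itself and factors through the quotient $B(V)$. The one structural fact I would record first is that the defining quotient map $\pi\colon TV\to A_\mrm{univ}$ is a morphism of $TV$-modules, where $TV$ acts on the source by $\cdot_\mrm{adj}$ and on the target by the induced action. This is immediate from the very definition of the induced action, $b\cdot \pi(a)=\pi(b\cdot_\mrm{adj}a)$, which is well-defined precisely because the defining ideal of $A_\mrm{univ}$ is stable under $\cdot_\mrm{adj}$. Consequently, for any $r,a\in TV$ the element $r\cdot_\mrm{adj}a\in TV$ maps under $\pi$ to the result of acting by $r$ on $\pi(a)\in A_\mrm{univ}$.

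With this in hand the main assertion is short. Writing $I=\ker(TV\to B(V))$, I would invoke the previous lemma to the effect that the induced $TV$-action on $A_\mrm{univ}$ factors through $B(V)=TV/I$; in particular every $r\in I$ acts as the zero operator on $A_\mrm{univ}$. Then for $r\in I$ and arbitrary $a\in TV$ we obtain $\pi(r\cdot_\mrm{adj}a)=r\cdot\pi(a)=0$, which says exactly that $r\cdot_\mrm{adj}a=0$ in $A_\mrm{univ}$. This is the displayed relation.

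For the independence claim I would give an intrinsic description of the ideal defining $A_\mrm{univ}$. Let $J=(r_i\cdot_\mrm{adj}a:1\le i\le l,\ a\in TV)$ be the ideal built from a chosen presentation $B(V)=TV/(r_1,\dots,r_l)$, and let $J'$ be the analogous ideal built from a second presentation. On the one hand each generator $r_i$ lies in $I$, so $J$ is contained in the two-sided ideal generated by $\{r\cdot_\mrm{adj}a:r\in I,\ a\in TV\}$; on the other hand the relation just established shows that every such $r\cdot_\mrm{adj}a$ with $r\in I$ already lies in $J$, so the two ideals coincide. Thus $J$ equals the two-sided ideal generated by $\{r\cdot_\mrm{adj}a:r\in I,\ a\in TV\}$, a description that refers only to $I=\ker(TV\to B(V))$ and not to the chosen relations; the same applies to $J'$, whence $J=J'$. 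Since the $B(V)$-action is given on generators by the fixed formula $x_i\cdot a=[c_i,a]_\mrm{sk}$, which does not involve the relations, the module-algebra structure is likewise unchanged.

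The genuinely substantive input is the preceding lemma, namely that the adjoint action descends to $A_\mrm{univ}$ and factors through $B(V)$; this in turn rests on $TV$ being a module algebra over itself under $\cdot_\mrm{adj}$ together with the fact that the defining ideal is generated by adjoint-action values. Once that is granted the corollary is purely formal, and the only point requiring care is the bookkeeping that $\pi$ is module-equivariant, so that the identity $r\cdot_\mrm{adj}a=0$ in $A_\mrm{univ}$ can be read off from the statement that $r$ acts as zero. I do not anticipate a serious obstacle beyond transporting the two-sided-ideal description correctly through a change of presentation.
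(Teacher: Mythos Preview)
Your proposal is correct and follows precisely the reasoning the paper intends: the paper's entire argument is the one-line observation ``since each relation for $B(V)$ in $TV$ must act trivially on $A_\mrm{univ}$ we have immediately,'' and you have simply unpacked this by making explicit the $TV$-equivariance of $\pi$ and then giving an intrinsic description of the defining ideal to establish independence. There is nothing to add; your write-up is a faithful (and more detailed) rendering of the paper's approach.
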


\begin{definition}
For given $V$ in $\YD(G)$, with $G$ abelian, we call $A_\mrm{univ}(V)$ the {\it universal algebra} for $V$.
\end{definition}

We would like to construct from $A_\mrm{univ}$ central simple $H$-division algebras, and therefore would like to develop means of understanding when $A_\mrm{univ}$ itself is finite over its center.

\begin{lemma}\label{lem:956}
Suppose the kernel $I$ of the projection $TV\to B(V)$ contains a right coideal subalgebra $\msc{R}\subset I$ such that
\begin{enumerate}
\item[(a)] $\msc{R}$ is a graded subalgebra in $\YD(G)$,
\item[(b)] $\msc{R}$ is finitely generated and
\item[(b)] the quotient $TV/(\msc{R}^+)$ is finite-dimensional.
\end{enumerate}
Then the algebra $A_\mrm{univ}(V)$ is finitely presented and finite over its center.
\end{lemma}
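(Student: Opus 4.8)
The plan is to produce a central affine subalgebra of $A_\mrm{univ}(V)$ over which $A_\mrm{univ}(V)$ is module-finite; granting this, both finiteness over the center and finite presentation follow from standard commutative-algebra facts. The natural candidate is built from the image $\bar{\msc R}$ of $\msc R$ under the projection $TV\to A_\mrm{univ}(V)$. By hypotheses (a) and (b) this image is a finitely generated graded subalgebra in $\YD(G)$, connected graded since $A_\mrm{univ}(V)$ is. There are then two points to establish: first, that $\bar{\msc R}$ lies in the total braided center $Z_{tot}(A_\mrm{univ})$; and second, that $A_\mrm{univ}$ is finite as a module over $\bar{\msc R}$.

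For the first point, recall that $\msc R\subset I$, so by the (unlabeled) corollary following the definition of the $B(V)$-action the adjoint action of every $r\in\msc R$ on $A_\mrm{univ}$ is trivial, $r\cdot_\mrm{adj}a=\epsilon(r)a$ for all $a$. Since this adjoint action is an algebra homomorphism $TV\to\End_\mbb{C}(A_\mrm{univ})$ sending $x_i$ to $[c_i,-]_\mrm{sk}$, and $\bar{\msc R}^+$ is spanned by the images of homogeneous elements of $\msc R^+$, I would prove braided centrality of these images by induction on the grading degree, exploiting that $\msc R$ is a right coideal subalgebra. For $r\in\msc R^+$ of minimal degree the coideal condition together with connectedness forces $\underline\Delta(r)=r\ot 1+1\ot r$, so $r$ is primitive, its adjoint action is exactly the braided commutator $[\bar r,-]_\mrm{sk}$, and triviality yields $\bar r\in Z_{tot}(A_\mrm{univ})$. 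For $r$ of higher degree, the right coideal condition guarantees that the reduced coproduct of $r$ has left tensor factors lying in $\msc R^+$ in strictly smaller degree, whose images are braided-central by the inductive hypothesis; feeding this into the formula $a\cdot_\mrm{adj}b=a_1\big((a_2)_{-1}b\big)S\big((a_2)_0\big)$ separates off the braided commutator $[\bar r,a]_\mrm{sk}$ from a remainder built out of these lower-degree central images, and one must check that the remainder does not interfere so that $r\cdot_\mrm{adj}a=0$ forces $[\bar r,a]_\mrm{sk}=0$. I expect carrying out this last reduction to be the main obstacle: it is the analogue, in the universal setting, of the containment $Z_0\subset Z_{tot}(R^\mrm{pre})$ of Proposition~\ref{prop:Z0centr}, but run in the opposite logical direction (extracting centrality from triviality of the action), and the bookkeeping of the interior coproduct terms must be handled carefully.

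Granting $\bar{\msc R}\subset Z_{tot}(A_\mrm{univ})$, finiteness over $\bar{\msc R}$ is a graded Nakayama argument. Writing $J$ for the defining ideal of $A_\mrm{univ}$ in $TV$, braided centrality makes the two-sided ideal generated by $\bar{\msc R}^+$ equal to $\bar{\msc R}^+A_\mrm{univ}$, and
\[
A_\mrm{univ}/\bar{\msc R}^+A_\mrm{univ}=TV/\big(J+(\msc R^+)\big)
\]
is a quotient of $TV/(\msc R^+)$, which is finite-dimensional by the last hypothesis. Since $A_\mrm{univ}$ is connected graded and $\bar{\msc R}$ is a graded central subalgebra, lifting a homogeneous basis of this finite-dimensional quotient yields, by graded Nakayama, a finite generating set for $A_\mrm{univ}$ as a $\bar{\msc R}$-module.

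It remains to replace $\bar{\msc R}$ by an honestly central affine subalgebra and conclude. As $\bar{\msc R}\subset Z_{tot}(A_\mrm{univ})$ is finitely generated and $G$ is finite, each homogeneous generator $z$ of $G$-degree $g_z$ satisfies $za=(g_z\cdot a)z$ for homogeneous $a$, a root-of-unity multiple of $az$; taking $N=\exp(G)$, the powers $z^N$ are genuinely central, and the subalgebra $C\subset\bar{\msc R}$ they generate is affine with $\bar{\msc R}$ module-finite over $C$. Combining with the preceding step, $A_\mrm{univ}$ is module-finite over the central affine subalgebra $C\subset Z(A_\mrm{univ})$, so in particular it is finite over its center. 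Finally, a module-finite algebra over a commutative affine (hence Noetherian) algebra is itself Noetherian and admits a finite presentation as a $\mbb{C}$-algebra, which gives the remaining assertion.
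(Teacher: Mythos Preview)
Your approach is essentially the paper's: show the image of $\msc{R}$ is braided-central by induction on degree using the right-coideal property, then apply graded Nakayama over $\bar{\msc{R}}$, then pass to the genuinely central affine subalgebra generated by $\exp(G)$-th powers. The paper organizes the induction slightly differently, as an equality of ideals $(r_i\cdot_{\mrm{adj}}a)_{i,a}=([r_i,x_j]_{\mrm{sk}})_{i,j}$ in $TV$, and works in the intermediate quotient $A=TV/(r_i\cdot_{\mrm{adj}}a)$ rather than in $A_{\mrm{univ}}$ itself; the ``main obstacle'' you correctly flag is dispatched exactly as you anticipate, by moving each lower-degree $f_m$ past $a$ via the inductive hypothesis and then collapsing the remaining terms with $\sum (r_i)_1S((r_i)_2)=\epsilon(r_i)=0$. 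The payoff of the intermediate algebra is that $A=TV/([r_i,x_j]_{\mrm{sk}})_{i,j}$ is visibly finitely presented, so finite presentation of $A_{\mrm{univ}}$ follows immediately as a quotient of the Noetherian ring $A$, avoiding the appeal to the general fact about module-finite algebras over affine centers that you invoke at the end.
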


\begin{proof}
Enumerate a homogeneous generating set $\{r_1,\dots, r_d\}$ for $\msc{R}$.  By homogeneous we mean homogeneous with respect to the $G\times G^\vee$-grading as well as the $\mbb{Z}$-grading.  Define $B=TV/(\msc{R}^+)=TV/(r_1,\dots, r_d)$ and $A=TV/(r_i\cdot_\mrm{adj}a)_i$, where $a$ runs over homogeneous elements in $TV$.  Note that $B$ is a finite-dimensional Hopf algebra in $\YD(G)$, by hypothesis, and surjects onto $B(V)$.  Note also that $A$ surjects onto $A_\mrm{univ}$.
\par

Take $I_k$ to be the ideal in $TV$ generated by the relations $r_i\cdot_\mrm{adj}a$ for $r_i$ with $\deg(r_i)\leq k$, and homogeneous $a\in TV$.  Let $J_k$ be the ideal generated by the $[r_i,a]_\mrm{sk}=r_ia-(g a)r_i$ for $r_i$ with $\deg(r_i)\leq k$ and $a$ homogeneous, where $g=\deg_G(r_i)$.  Since each $[r_i,-]_{\mrm{sk}}$ is a skew derivation, $J_k$ is alternatively generated by the relations $[r_i,x_j]_{\mrm{sk}}$ for varying $i$ and $j$.  We would like to show $I_k=J_k$ for all $k$.  We have $I_1=J_1=0$.
\par

We have for each relation
\[
\Delta(r_i)=r_i\ot 1+1\ot r_i+\sum_{m} f_m\ot h_m,
\]
where the $f_m\in \msc{R}$ and the $h_m\in TV$, and $\deg(f_m),\deg(h_m)<\mrm{deg}(r_i)$, since $\msc{R}$ is coideal subalgebra.  Suppose we have $I_{k-1}=J_{k-1}$ for some $k$.  Then
\[
I_k=(r_i\cdot_{\mrm{adj}}a:\deg(r_i)=k)_{a\in TV}+I_{k-1}=(r_i\cdot_{\mrm{adj}}a:\deg(r_i)=k)_{a\in TV}+J_{k-1},
\]
and one also computes for $r_i$ of degree $k$,
\[
\begin{array}{rl}
r_i\cdot_\mrm{adj}a&=[r_i,a]_\mrm{sk}+\sum_m \chi_a(\mrm{deg}_G(h_m))f_maS(h_m)\\
&=r_ia+\chi_a(g)aS(r_i)+\sum_m \chi_a(g)af_mS(h_m)\ \ \mrm{mod}\ J_{\mrm{deg}(r_i)-1}\\
&=r_ia+\chi_a(g)a\big((r_i)_1S((r_i)_2)-r_i)\\
&=r_ia-\chi_a(g)ar_i\\
&=[r_i,a]_\mrm{sk},
\end{array}
\]
where in the above computation $\deg_G(r_i)=g$ and $\deg_{G^\vee}(a)=\chi_a$.  Hence $I_k=J_k$ and, by induction, we have 
\[
(r_i\cdot_\mrm{adj}a)_{i,a}=\cup_{k>0} I_k=\cup_{k>0} J_k=([r_i,x_j]_\mrm{sk})_{i,j}.
\]
The above identification provides a presentation
\begin{equation}\label{eq:1024}
A=TV/([r_i,a]_\mrm{sk})_{i,a}=TV/([r_i,x_j]_\mrm{sk})_{i,j}.
\end{equation}
\par

Let $\msc{R}'$ be the image of $\msc{R}$ in $A$.  Via the relations~\eqref{eq:1024} we see that $\msc{R}'$ is the quotient of a skew polynomial ring which is finite over its center, and also that $\msc{R}'$ is normal in $A$, in the sense that $(\msc{R}')^+A=A(\msc{R}')^+$.  Note that a bounded below $\mbb{Z}$-graded module $M$ over a $\mbb{Z}_{\geq 0}$-graded algebra $T$ with $T_0=\mbb{C}$ is finitely generated if and only if the reduction $\mbb{C}\ot_T M$ is finite-dimensional.  So we see that $A$ is finite over $\msc{R}'$, and hence finite over its center, as the reduction $\mbb{C}\ot_{\msc{R}'}A=B$ is finite-dimensional by hypothesis.
\par

The center of $\msc{R}'$ is finite over $\mbb{C}[r^{\exp(G)}_i:1\leq i\leq d]$ and hence finitely generated.  In particular, the center of $\msc{R}'$ is Noetherian.  As $A$ is finite over $Z(\msc{R}')$ it follows that any ideal in $A$ is finitely generated as well.  Whence the kernel of the surjection $A\to A_\mrm{univ}$ is finitely generated, and we see that $A_\mrm{univ}$ is finitely presented.
\end{proof}

\begin{remark}
In the notation of Lemma~\ref{lem:956}, one can produce coideal subalgebras in $I\subset TV$ by considering, for example, subalgebras generated by coideals in $TV$ which are contained in $I$.
\end{remark}

The most immediate way for the hypotheses of Lemma~\ref{lem:956} to be satisfied is if a generating set of relations for $B(V)$ can, in its entirety, be chosen to generate a coideal subalgebra in $TV$.

\begin{lemma}\label{lem:AunivPI}
Suppose there is a choice of homogeneous relations $\{r_1,\dots, r_d\}$ for $B(V)$ so that the subalgebra $\msc{R}$ generated by the $r_i$ in $TV$ forms a coideal subalgebra.  (For example, this occurs when the relations for $B(V)$ can be chosen to be primitive.)  Then $A_\mrm{univ}$ is finite over its center, and has a presentation $A_\mrm{univ}=TV/([r_i,x_j]_\mrm{sk})_{i,j}$.
\end{lemma}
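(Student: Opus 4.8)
The plan is to derive this statement directly from Lemma~\ref{lem:956}, whose proof already contains the substantive content: the inductive identification of ideals giving the presentation~\eqref{eq:1024}, and the Noetherian argument establishing finiteness over the center. First I would verify that the subalgebra $\msc{R}$ generated by the chosen relations $\{r_1,\dots,r_d\}$ meets the three hypotheses of Lemma~\ref{lem:956}. Hypothesis (a) holds because each $r_i$ is homogeneous for the $G\times G^\vee$- and $\mbb{Z}$-gradings, so the subalgebra they generate is stable under the action and coaction and is thus a graded subalgebra in $\YD(G)$. Hypothesis (b) is immediate, as $\msc{R}$ is generated by the finite set $\{r_i\}$. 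For (c), I would use that the $r_i$ form a full generating set of the relation ideal $I=\ker(TV\to B(V))$, so that $(\msc{R}^+)=I$ and hence $TV/(\msc{R}^+)=B(V)$, which is finite-dimensional. Combined with the hypothesis that $\msc{R}$ is a right coideal subalgebra, this permits me to invoke Lemma~\ref{lem:956} and conclude that $A_\mrm{univ}$ is finite over its center.

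For the asserted presentation, I would observe that the intermediate algebra $A=TV/(r_i\cdot_\mrm{adj}a)_i$ constructed in the proof of Lemma~\ref{lem:956} from the generators $r_i$ of $\msc{R}$ actually coincides with $A_\mrm{univ}$ in the present setting. This is precisely because here the $r_i$ are a full generating set of relations for $B(V)$, so the defining relations of $A$ are exactly those of $A_\mrm{univ}$; equivalently, the surjection $A\to A_\mrm{univ}$ noted in that proof is now an isomorphism. Transporting~\eqref{eq:1024} then gives $A_\mrm{univ}=TV/([r_i,x_j]_\mrm{sk})_{i,j}$ with no further work.

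To justify the parenthetical remark, I would check that primitive relations automatically generate a coideal subalgebra. If each $r_i$ is primitive, then $\Delta(r_i)=r_i\ot 1+1\ot r_i$ lies in $\msc{R}\underline{\ot}TV$, since $1\in\msc{R}$. Because $\Delta$ is a braided algebra map and $\msc{R}\underline{\ot}TV$ is a subalgebra of $TV\underline{\ot}TV$ (using that $\msc{R}$ is stable under the $G$-action entering the braided product), multiplicativity propagates the containment to all of $\msc{R}$, giving $\Delta(\msc{R})\subseteq\msc{R}\underline{\ot}TV$; thus $\msc{R}$ is a right coideal subalgebra.

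Since Lemma~\ref{lem:956} carries the analytic weight, the only points requiring genuine care are the identification $A=A_\mrm{univ}$ — i.e.\ confirming that feeding a full generating set of $I$ into the construction of Lemma~\ref{lem:956} recovers $A_\mrm{univ}$ on the nose rather than merely surjecting onto it — and the verification of the coideal property in the primitive case. Neither is deep, so I expect this statement to be essentially a corollary of the preceding lemma.
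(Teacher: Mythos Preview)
Your proposal is correct and follows essentially the same approach as the paper: invoke Lemma~\ref{lem:956} for finiteness over the center, and extract the presentation from equation~\eqref{eq:1024} in that lemma's proof by noting that in this situation the auxiliary algebra $A$ coincides with $A_\mrm{univ}$. Your write-up is in fact more detailed than the paper's (which simply cites Lemma~\ref{lem:956} twice), and your additional verification of the parenthetical remark about primitive relations is a welcome supplement that the paper leaves implicit.
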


\begin{proof}
The fact that $A_\mrm{univ}$ is finite over its center follows by Lemma~\ref{lem:956}.  The presentation by skew commutators was already provided in the proof of Lemma~\ref{lem:956}.
\end{proof}

In non-Cartan, diagonal, type the stronger hypotheses of Lemma~\ref{lem:AunivPI} are not always met.  (There are certainly examples in which they are met, however.  See Section~\ref{sect:ane}.)  Indeed, one can show for some simple super-type algebras that $A_\mrm{univ}$ does not have the desired commutator relations.  In some more regular settings, however, we expect that the conditions of Lemma~\ref{lem:AunivPI} will be met.  One can prove, for example, that this occurs for the quantum Borel in small quantum $\mfk{sl}_3$ at $q$ a $3$-rd root of $1$.

\subsection{Central simple division algebras via the universal algebra}

Take $A_\mrm{univ}=A_\mrm{univ}(V)$, as above, and $H=B(V)\rtimes G$.  Consider any field $K$ with a $G$-action, which we consider as an algebra in $\YD(G)$ by taking the trivial $G$-grading, and also as a trivial $B(V)$-module algebra.  We may take the tensor product $K\underline{\ot}A_\mrm{univ}$ to get a well-defined $B(V)$-module algebra in $\YD(G)$ (cf.\ proof of Theorem~\ref{thm:cart_type}).  Consider now any quotient
\[
A(K,I):=K\underline{\ot}A_\mrm{univ}/I
\]
via a prime $G$-ideal $I$ such that $A(K,I)$ is (a domain which is) finite over its center.  Since $B(V)$ acts by skew commutators on $K\underline{\ot}A_\mrm{univ}$, any such ideal will additionally be an $H=B(V)\rtimes G$-ideal.  In this case the ring of fractions
\[
Q(K,I):=\Frac(K\underline{\ot}A_\mrm{univ}/I)
\]
is a central simple division algebra on which $B(V)$ acts faithfully, by~\cite[Theorem 2.2]{skryabinoystaeyen06}.

\begin{definition}
A pair $(K,I)$ of an field $K$ with a $G$-action and a prime $G$-ideal $I$ in $K\underline{\ot}A_\mrm{univ}$ is called a pre-faithful pair if the quotient $A(K,I)$ is finite over its center.  A pre-faithful pair is called faithful if the $H$-action on $A(K,I)$ is inner faithful.
\end{definition}

Note that when $A_\mrm{univ}$ is finite over its center, $A(K,I)$ is finite over its center for any choice of $K$ and $I$ (see Lemmas~\ref{lem:956} and~\ref{lem:AunivPI}).  Also, there are practical conditions on $K$ and $I$ which ensure that $H$ acts inner faithfully on $A(K,I)$.  For example, if the sum $K\oplus V$ generates $\operatorname{Rep}(G)$ and the composition $V\to A_\mrm{univ}\to A(K,I)$ is injective then the $H$-action on $A(K,I)$ is inner faithful.
\par

In what follows we consider $H$-module structures on a given algebra $Q$ which are induced by a $B(V)$-module structure in $\YD(G)$.  An additional $\YD(G)$-structure on an $H$-module algebra $Q$ consists only of a choice of an additional action of the character group $G^\vee$ on $Q$, which is compatible with the given $H$-action.

\begin{proposition}\label{prop:q_univ}
Suppose $H=B(V)\rtimes G$ acts inner faithfully on a central simple division algebra $Q$.  Then
\begin{enumerate}
\item $Q$ admits an $H$-module algebra map $f:A_\mrm{univ}\to Q$ so that $x_i\cdot a=[f(c_i),a]_\mrm{sk}$ for each $x_i\in \Prim(H)'$ and $a\in Q$.
\item $Q$ contains an $H$-division subalgebra of the form $Q(K',I')$ for some pre-faithful pair $(K',I')$.
\item If the $H$-action on $Q$ is induced by a $B(V)$-module algebra structure in $\YD(G)$, then $Q$ contains an $H$-division subalgebra $Q'$ over which $Q$ is a finite module, and which admits an embedding $Q'\to Q(K,I)$ into a division algebra associated to a faithful pair.  In particular, the existence of such $Q$ impies the existence of a faithful pair for $H$.
\end{enumerate}
\end{proposition}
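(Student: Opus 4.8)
The plan is to treat the three parts in order, using Lemma~\ref{lem:inn_der} to linearize each skew-primitive action and then the defining (universal) property of $A_\mrm{univ}$. For part (1), decompose $Q=\oplus_\mu Q_\mu$ under the $G$-action and set $K=Z(Q)^H$; by Proposition~\ref{prop:735} the algebra $Q$ is finite over this central invariant subfield. For each homogeneous basis primitive $x_i$, the operator $x_i\cdot(-)\colon Q\to Q$ is a $K$-linear, homogeneous $(g_i,1)$-skew derivation (the $K$-linearity because $K\subseteq Q^H$ forces $x_i\cdot k=\epsilon(x_i)k=0$), so Lemma~\ref{lem:inn_der} produces $c_i\in Q_{\chi_i}$ with $x_i\cdot a=[c_i,a]_\mrm{sk}$. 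I would then define the algebra map $\phi\colon TV\to Q$ by $x_i\mapsto c_i$; it is $G$-equivariant since the $G^\vee$-degree of $c_i$ matches that of $x_i$. The key identity to establish is $\phi(u\cdot_\mrm{adj}w)=u\cdot_Q\phi(w)$ for all $u,w\in TV$, where $\cdot_Q$ is the restriction of the $H$-action along $TV\to B(V)\hookrightarrow H$. On a generator this is the skew-commutator computation $\phi(x_i\cdot_\mrm{adj}w)=[c_i,\phi(w)]_\mrm{sk}=x_i\cdot_Q\phi(w)$, and since both $\cdot_\mrm{adj}$ and $\cdot_Q$ are module actions it extends to all $u$ by induction on degree. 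Because the $H$-action factors through $B(V)\rtimes G$, every defining relation $r_j$ of $B(V)$ acts as zero on $Q$, whence $\phi(r_j\cdot_\mrm{adj}w)=r_j\cdot_Q\phi(w)=0$; so $\phi$ annihilates the ideal defining $A_\mrm{univ}$ and descends to the required $H$-module algebra map $f\colon A_\mrm{univ}\to Q$ with $f(c_i)=c_i$.

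For part (2), I would take $K'=Z(Q)^H$ and form the subalgebra $A(K',I'):=K'\cdot f(A_\mrm{univ})\subseteq Q$, with $I'$ the kernel of the $G$-equivariant map $K'\underline{\ot}A_\mrm{univ}\to Q$. Since $Q$ is finite over the field $K'$, this subalgebra is finite-dimensional over $K'$; being a domain (a subalgebra of a division ring) it is automatically a finite-dimensional division algebra, in particular finite over its center. Thus $I'$ is a prime $G$-ideal, $(K',I')$ is a pre-faithful pair, and $A(K',I')=Q(K',I')$ is an $H$-division subalgebra of $Q$.

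For part (3), the Yetter-Drinfeld hypothesis lets me regard $f$ as a morphism of algebras in $\YD(G)$. Set $Q'=Q(K',I')$ as in part (2); then $Q$ is finite over $Q'$ because $Q$ is finite over $K'\subseteq Q'$. To upgrade pre-faithfulness to faithfulness I would enlarge the base field: let $K=K'(w_\mu:\mu\in G^\vee)$ be the purely transcendental $G$-field extension with $g\cdot w_\mu=\mu(g)w_\mu$ (carried in $\YD(G)$ with trivial grading), and let $I$ be the extension of $I'$, so that $A(K,I)\cong K\ot_{K'}Q'$. Now $K\oplus V$ contains every character of $G$, and $V\to A(K,I)$ is injective because inner faithfulness of the original action forces $V\to Q$ to be injective (via Corollary~\ref{cor:fate}); the sufficient criterion recorded just before the proposition then shows the $H$-action on $A(K,I)$ is inner faithful, i.e.\ $(K,I)$ is a faithful pair. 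Finally $K'\hookrightarrow K$ induces the embedding $Q'=Q(K',I')\hookrightarrow Q(K,I)$, which is the stated conclusion and, in particular, exhibits a faithful pair for $H$.

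The step I expect to be the main obstacle is the descent in part (1): one must be certain that the separately chosen inner derivators $c_i$ jointly satisfy \emph{all} the universal relations of $A_\mrm{univ}$, including those mixing several $x_i$, and this is exactly what the intertwining identity $\phi(u\cdot_\mrm{adj}w)=u\cdot_Q\phi(w)$ together with the vanishing of each $r_j$ on $Q$ is designed to supply. A secondary technical point is confirming in part (3) that $A(K,I)\cong K\ot_{K'}Q'$ remains a domain finite over its center; this follows from the standard fact that a purely transcendental extension of the center preserves the division-algebra property, since $K\ot_{K'}Q'\cong Q'\ot_{Z(Q')}Z(Q')(w_\mu:\mu\in G^\vee)$.
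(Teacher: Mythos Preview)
Your argument for parts (1) and (2) is essentially the paper's: Lemma~\ref{lem:inn_der} applied over $K'=Z(Q)^H$ furnishes the $c_i$, the intertwining identity $\phi(u\cdot_{\mrm{adj}}w)=u\cdot\phi(w)$ forces the relations of $A_{\mrm{univ}}$ to die, and the image in $Q$ is the desired $Q(K',I')$.

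Part (3) follows the same outline as the paper, but your execution has two gaps.  First, your appeal to Corollary~\ref{cor:fate} for ``inner faithfulness forces $V\to Q$ injective'' is in the wrong direction: that corollary gives a \emph{sufficient} condition for inner faithfulness, not a necessary one.  The paper argues directly---if a homogeneous $v\in V$ maps to $0$ then $v$ acts trivially on $Q$, so the action factors through the proper Hopf quotient $H/(v)$, contradicting inner faithfulness.  Second, and more seriously, your claim that $A(K,I)\cong Q'\otimes_{Z(Q')}Z(Q')(w_\mu)$ is false: in the braided product $K\underline{\otimes}A_{\mrm{univ}}$ the variables $w_\mu$ \emph{skew}-commute with $Q'$ via the $G$-grading, $qw_\mu=\mu(g_q)w_\mu q$, so they are not central and the ``purely transcendental central extension'' argument does not apply.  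In particular, since $K=K'(w_\mu)$ contains inverses of $G$-inhomogeneous polynomials, it is not immediate that $K\underline{\otimes}_{K'}Q'$ is a domain.

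The paper sidesteps this by working with the polynomial ring first: take $S=\operatorname{Sym}(W)$ for a suitable $G$-representation $W$, form $S\underline{\otimes}Q'$ (an iterated Ore extension of the division ring $Q'$, hence a domain finite over its center), and only then pass to $Q''=\Frac(S\underline{\otimes}Q')$.  The field $K$ is then recovered as the image of $\Frac(S\otimes K')$ inside $Q''$, and one reads off $Q''=Q(K,I)$.  Your construction can be repaired in exactly this way.
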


\begin{proof}
(1) By Lemma~\ref{lem:inn_der} the $x_i$ act on $Q$ as skew derivations
\[
x_i\cdot a=[c'_i,a]_\mrm{sk}=c'_ia-(g_i\cdot a)c'_i
\]
for some $c'_i\in Q$ of $G^\vee$-degree $\chi_i$.  (Here $(g_i,\chi_i)$ denotes the $G\times G^\vee$-degree of $x_i$ in $B(V)$.)  We claim that the assignment $f(c_i)=c'_i$ provides the necessary map of (1).  Indeed, the corresponding map $F:TV\to Q$, $F(x_i)=c'_i$ is a well-defined $TV\rtimes G$-module map, and factors through $A_\mrm{univ}$ as any relation $r$ for $B(V)$ is such that $F(r\cdot a)=r\cdot F(a)=0$.  Whence there is a well-defined $G$-algebra map $f:A_\mrm{univ}\to Q$, $f(c_i)=c'_i$, which commutes with the skew derivations $x_i\cdot-$, and is therefore a map of $H$-module algebras.
\par

(2) Take $K'$ to be a $G$-subfield in $Q$ which is contained in the $B(V)$-invariants, and which contains $Z(Q)^H$.  By Proposition~\ref{prop:735} $Q$ is finite over $K'$.  The $B(V)$-invariance of $K'$ tells us that all the $c'_i\in Q$, from (1), skew commute with $K'$.  Hence the map $f$ of (1) extends to $f':K'\underline{\ot} A_\mrm{univ}\to Q$.  Take $I'=\ker(f')$ to obtain the desired pre-faithful pair.
\par

(3) Via the Yetter-Drinfeld structure on $Q$, we may take each $c'_i\in Q$ of the appropriate $G\times G^\vee$-degree $(g_i,\chi_i)$.  The map $A_\mrm{univ}\to Q$ is then a map in $\YD(G)$, and inner faithfulness ensures that the composite $V\to A_\mrm{univ}\to Q$ is injective.  (Otherwise homogeneous elements in the kernel would act trivially on $Q$.)
\par

Take $Q'=Q(K',I')$ with $K'$ and $I'$ as in (2), and let $S=\operatorname{Sym}(W)$ where $W$ is a (finite-dimensional) $G$-representation such that $W\oplus Q'$ generates $\mrm{Rep}(G)$ as a tensor category.  If we take $S$ as a trivial $G$-comodule, the diagonal $H$-action on the tensor product $S\underline{\ot}Q'$ gives it an $H$-module algebra structure.  This algebra is a domain which is finite over its center, and so we take the ring of fractions to get a central simple algebra $Q''=\mrm{Frac}(S\underline{\ot}Q')$ on which $H$-acts inner faithfully.  If we take $K$ to be the image of the $G$-algebra $\Frac(S\ot K')$ in $Q''$, and $I$ the kernel of the map $K\underline{\ot}A_\mrm{univ}\to Q''$, then we see $Q''=Q(K,I)$.
\end{proof}

\begin{remark}
We have a faithful braided functor $\YD(G)\to \YD(G\times G^\vee)$ so that Hopf algebras in $\YD(G)$ are sent to Hopf algebras in $\YD(G\times G^\vee)$, and an extension of an $H$-action on $Q$ to a $B(V)$-action in $\YD(G)$ is equivalent to an action of the pointed algebra $B(V)\rtimes (G\times G^\vee)$ on $Q$.  So, in terms of the general question of (non-)existence of actions of pointed, coradically graded, Hopf algebras on central division algebras, one may deal only with actions of Nichols algebras in Yetter-Drinfeld categories.
\par

In particular, the non-existence of a faithful pair $(K,I)$ for a particularly pathological braided vector space $V$ in some $\YD(G)$ would provide a negative resolution to~\cite[Question 1.1]{cuadraetingof16}.  One could also attempt to approach actions on quantum tori~\cite[Conjecture 0.1]{artamonov05} via $A_\mrm{univ}$.
\end{remark}

Proposition~\ref{prop:q_univ} is, of course, why we refer to $A_\mrm{univ}$ as the universal algebra for $H$.

\subsection{A non-Cartan example}
\label{sect:ane}

We provide a small example to illustrate the manner in which $A_\mrm{univ}$ can be employed to obtain results outside of Cartan type.  Consider $V_2=\mbb{C}\{x_1,x_2\}$ the $2$-dimensional braided vector space with braiding matrix $
[q_{ij}]=\left[\begin{array}{cc}
-1 & \sqrt{-1}\\
-1 & \sqrt{-1}
\end{array}\right]$.  We take $V_2$ as an object in $\YD(\mbb{Z}/4\mbb{Z})$ with each of the $x_i$ homogeneous of degree $g$, where $g$ generates $\mbb{Z}/4\mbb{Z}$, and $g\cdot x_1=-x_1$, $g\cdot x_2=\sqrt{-1}x_2$.  Note that $V_2$ is a faithful $\mbb{Z}/4\mbb{Z}$-representation, and that $V_2$ is not of Cartan type, as $q_{12}q_{21}=-\sqrt{-1}$ is not in the orbit of $q_{11}=-1$.
\par

By~\cite{nichols78} (see also \cite[Remark 2.13]{grana00}),
the Nichols algebra $R=B(V_2)$ has relations
\begin{equation}\label{eq:78}
x_1^2=0,\ \ x_2^4=0,\ \ \mrm{ad}_\mrm{sk}(x_1)^2(x_2)=0,\ \ \mrm{ad}_\mrm{sk}(x_2)^2(x_1)=0.
\end{equation}
One can check directly, or use the fact that $x_1^2$ is primitive, to see that the relation $x_1^2=0$ implies the relation $\mrm{ad}_\mrm{sk}(x_1)^2(x_2)=0$.  Hence we have the minimal presentation
\[
B(V_2)=\mbb{C}\langle x_1,x_2\rangle/(x_1^2, x_2^4, \mrm{ad}_\mrm{sk}(x_2)^2(x_1)).
\]

One sees that each of the minimal relations for $B(V_2)$ is primitive in the tensor algebra $TV$ (see~\cite{andruskiewitschschneider02}).  Hence the universal algebra in this case has relations given by skew commutators
\[
A_\mrm{univ}(V_2)=\mbb{C}\langle c_1,c_2\rangle/([c_1^2,c_2]_\mrm{sk},[c_2^4,c_1]_\mrm{sk}, [\mrm{ad}_\mrm{sk}(c_2)^2(c_1),c_i]_\mrm{sk}).
\]
One checks directly that in the quotient algebra $\mbb{C}_i[c_1,c_2]=\mbb{C}\langle c_1,c_2\rangle/([c_1,c_2]_\mrm{sk})$ we have 
\[
[c_1^2,c_2]_\mrm{sk}=[c_2^4,c_1]_\mrm{sk}=0\ \ \text{and}\ \ \mrm{ad}_\mrm{sk}(c_2)^2(c_1)=0,
\]
which implies $[\mrm{ad}_\mrm{sk}(c_2)^2(c_1),c_i]_\mrm{sk}=0$.  Hence we have the obvious quotient $\pi:A_\mrm{univ}(V_2)\to \mbb{C}_i[c_1,c_2]$.  The pair $(\mbb{C},\mrm{ker}(\pi))$ is faithful, and so we produce a central simple division algebra
\[
Q(\mbb{C},\mrm{ker}(\pi))=\Frac(\mbb{C}_i[c_1,c_2])
\]
on which the non-Cartan type graded Hopf algebra $H=B(V_2)\rtimes \mbb{Z}/4\mbb{Z}$ acts inner faithfully.

\bibliographystyle{abbrv}

\end{document}